\theoremstyle{definition}
\newtheorem{mydef}{Definition}[section]
\theoremstyle{plain}
\newtheorem{myprop}[mydef]{Proposition}
\newtheorem{mythm}[mydef]{Theorem}
\newtheorem{mylemma}[mydef]{Lemma}
\newtheorem{mycor}[mydef]{Corollary}
\theoremstyle{remark}
\newtheorem{myremark}[mydef]{Remark}
\numberwithin{equation}{section}
\newcommand{\Prob}[0]{\mathbb{P}}
\newcommand{\E}[0]{\mathbb{E}}
\newcommand{\Ind}[0]{\mathbbm{1}}
\newcommand{\States}[0]{\mathcal{X}}
\newcommand{\Reals}[0]{\mathbb{R}}
\newcommand{\F}[0]{\mathcal{F}}
\newcommand{\J}[0]{\mathbb{J}}
\newcommand{\Q}[0]{\mathbb{Q}}
\newcommand{\ubar}[1]{\underaccent{\bar}{#1}}
\begin{document}

\title{Ergodic Backward Stochastic Difference Equations}
\author{Andrew L. Allan and Samuel N. Cohen\footnote{Research supported by the Oxford--Man Institute for Quantitative Finance.}\\
\smallskip Mathematical Institute, University of Oxford\\
andrew.allan@keble.ox.ac.uk\\
samuel.cohen@maths.ox.ac.uk}
\date{\today}
\maketitle

\begin{abstract}
We consider ergodic backward stochastic differential equations in a discrete time setting, where noise is generated by a finite state Markov chain. We show existence and uniqueness of solutions, along with a comparison theorem. To obtain this result, we use a Nummelin splitting argument to obtain ergodicity estimates for a discrete time Markov chain which hold uniformly under suitable perturbations of its transition matrix. We conclude with an application of this theory to a treatment of an ergodic control problem.

\noindent Keywords: Ergodic BSDE, Markov Chain, Uniform Ergodicity, Nummelin Splitting, Ergodic control

\noindent MSC:	60J10, 93E20, 60F99
\end{abstract}

\section{Introducing Discrete Time BSDEs}

\subsection{Introduction}

Over the last 25 years, Backward Stochastic Differential Equations (BSDEs) have been extensively researched, and established as a fundamental object in mathematical finance and stochastic control. Here, one is typically interested in the solution process $(Y,Z)$ of an equation of the form
$$Y_t - \int_t^T f(\omega,u,Y_u,Z_u) du + \int_t^T Z_udW_u = \xi,$$
for some generator (or driver) function $f$ and some $\F_T$-measurable terminal condition $\xi$, where $W$ is an $n$-dimensional Brownian motion.

When the cost of a control problem is considered in an ergodic manner, that is, it is the long run behaviour which is important, the associated equations turn out to be examples of a variation of these equations, named `Ergodic BSDEs', which typically take the form
$$Y_T = Y_t - \int_t^T \big(f(X_u,Z_u) - \lambda\big) du + \int_t^T Z_udW_u.$$
Having been first introduced in Fuhrman, Hu and Tessitore \cite{FuhrmanHuTessitore2009}, the theory of these equations and their link to ergodic control problems have been researched over the last few years; see Richou \cite{Richou2009}, Debussche, Hu and Tessitore \cite{DebusscheHuTessitore2011} and Cohen and Fedyashov \cite{CohenFedyashov}. In Cohen and Hu \cite{CohenHu2013}, these equations were considered in the case where noise is generated by a continuous time, countable state Markov chain.

This paper considers the discrete time analogue of these equations, where noise is generated by a discrete time, finite state Markov chain. In Section 2 we shall extend the existing theory of discrete time BSDEs, as given in Cohen and Elliott \cite{CohenElliott2010}, to a class of infinite horizon `discounted' BSDEs. Section 3 is a brief digression to the study of uniformly ergodic Markov chains in discrete time, in which we shall show how to obtain ergodicity estimates for a Markov chain which are robust under a suitable perturbation of its transition matrix. These estimates have not been previously obtained in a discrete time setting and are of independent interest in the study of the ergodic properties of discrete time Markov chains.

We shall then make use of these estimates in Section 4 to construct a suitable limiting procedure with which to prove the existence of bounded Markovian solutions to discrete time Ergodic BSDEs. We will also prove a comparison theorem, and observe the relationship between the Markovian solution and the ergodic measure of the underlying Markov chain. Finally, in Section 5 we shall show how our theory can be applied to a particular ergodic control problem.

\subsection{Finite horizon BSDEs}

We will begin by introducing the theory of finite horizon, discrete time BSDEs, as established in \cite{CohenElliott2010}.

Consider an underlying discrete time, finite state stochastic process $X$. Without loss of generality, we may assume that $X$ takes values in the standard basis of $\Reals^N$, for some $N \in \mathbb{N}$. That is, for each $t \in \{0,1,2,\ldots\}$,
$$X_t \in \States := \{e_1,e_2,\ldots,e_N\},$$
where $e_k = (0,0,\ldots,0,1,0,\ldots,0)^{\ast} \in \Reals^N$, and $[\, \cdot \,]^{\ast}$ denotes vector (or matrix) transposition.

Let $(\Omega,\mathcal{F},\{\mathcal{F}_t\}_{t \geq 0},\Prob)$ be a filtered probability space, where $\mathcal{F}_t$ is the completion of the $\sigma$-algebra generated by the process $X$ up to time $t$, and assume that $\mathcal{F} = \mathcal{F}_{\infty} := \bigvee_{t \geq 0} \mathcal{F}_t$.

\begin{myremark}
We have not assumed that $\F$ is complete, nor that $\F_0$ contains all the $\Prob$-null sets in $\F_{\infty}$. Later, we will consider our processes under a variety of different probability measures, and we do not wish for $\Prob$-null events in the tail $\sigma$-algebra to be known at finite times, as these events may occur with positive probability under a new measure.
\end{myremark}

Note that the atoms of $\F_t$ are sets of positive measure of the form
$$\{\omega' \in \Omega : X_s(\omega') = X_s(\omega) \ \, \text{for all} \ \, s \leq t\}$$
for some $\omega \in \Omega$, up to a null set in $\F_t$. We will find that much of our analysis can be done by restricting our attention to individual atoms.

For $t \geq 1$, let $M_t := X_t - \E[X_t \, | \, \mathcal{F}_{t-1}]$, which we note defines a martingale difference sequence $M$.

We now define a backward stochastic difference equation (BSDE) as an equation of the form
\begin{equation}\label{eq:BSDE}
Y_t - \sum_{t \leq u < T} f(\omega,u,Y_u,Z_u) + \sum_{t \leq u < T} Z_u^{\ast}M_{u+1} = \xi,
\end{equation}
where $T>0$ is a finite deterministic terminal time, $f : \Omega \times \{0,\ldots,T-1\} \times \Reals \times \Reals^N \rightarrow \Reals$ is an adapted map, and $\xi : \Omega \rightarrow \Reals$ is an $\mathcal{F}_T$-measurable random variable. By ``$f$ is adapted'' we mean that, for all $t \in \{0,\ldots,T-1\}$, the map from $\Omega \times \Reals \times \Reals^N \rightarrow \Reals$ given by $(\omega,y,z) \mapsto f(\omega,t,y,z)$ is $\mathcal{F}_t \otimes \mathcal{B}(\Reals) \otimes \mathcal{B}(\Reals^N)$-measurable. Since there are only finitely many possible paths for $X$ up to time $t$, $\mathcal{F}_t$ is sufficiently coarse that all $\F_t$-measurable real-valued functions belong to $L^1(\Omega,\mathcal{F}_t,\Prob)$ and $L^{\infty}(\Omega,\mathcal{F}_t,\Prob)$.

A solution to the BSDE (\ref{eq:BSDE}) is a pair $(Y,Z)$ of adapted processes, taking values in $\Reals \times \Reals^N$, such that
$$Y_t(\omega) - \sum_{t \leq u < T} f(\omega,u,Y_u(\omega),Z_u(\omega)) + \sum_{t \leq u < T} Z_u^{\ast}(\omega) M_{u+1}(\omega) = \xi(\omega)$$
holds for all $t \in \{0,\ldots,T\}$ and almost all $\omega$.

\begin{mydef}\label{DefnSeminorms}
We will denote by $\| \cdot \|_{M_{t+1}}$ the stochastic seminorm on the space of $\Reals^N$-valued, $\F_t$-measurable random variables given by
$$\| Z_t \|_{M_{t+1}}^2 := Z_t^{\ast} \, \E \big[ M_{t+1}M_{t+1}^{\ast} \, \big| \, \F_t \big] Z_t = \E \big[ (Z_t^{\ast}M_{t+1})^2 \, \big| \, \F_t \big],$$
and write $Z_t \sim_{M_{t+1}} Z'_t$ whenever $\| Z_t - Z'_t \|_{M_{t+1}} = 0$ a.s. We also denote by $\| \cdot \|_M$ the seminorm on the space of adapted processes in $\Reals^N$ given by
$$\| Z \|_M^2 := \E \Bigg[ \sum_{0 \leq u < T} \| Z_u \|_{M_{u+1}}^2 \Bigg] = \sum_{0 \leq u < T} \E \big[ (Z_u^{\ast}M_{u+1})^2 \big],$$
and write $Z \sim_M Z'$ whenever $\| Z - Z' \|_M = 0$.
\end{mydef}

It is clear that $Z \sim_M Z'$ if and only if $Z_t \sim_{M_{t+1}} Z'_t$ for all $t \in \{0,\ldots,T-1\}$. It is also easy to see that $\sim_M$ and $\sim_{M_{t+1}}$ are both equivalence relations.

We will construct the solutions of our BSDE by making use of the following Martingale Representation Theorem from Elliott and Yang \cite{ElliottYang1994}.

\begin{mythm}\label{martrepthm}
Let $L$ be a real-valued $\{\F_t\}_{t \geq 0}$-adapted martingale. Then there exists an adapted $\Reals^N$-valued process $Z$ such that
$$L_t = L_0 + \sum_{0 \leq u < t} Z_u^{\ast}M_{u+1}$$
for all $t$. Further, the process $Z$ is unique up to equivalence $\sim_M$.
\end{mythm}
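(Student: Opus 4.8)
The plan is to reduce the global representation to a one-step statement: for each fixed $u$, I will exhibit an $\F_u$-measurable vector $Z_u$ with $L_{u+1} - L_u = Z_u^{\ast} M_{u+1}$ almost surely. Summing these identities over $0 \leq u < t$ then proves the theorem, since the left-hand side telescopes to $L_t - L_0$. The construction of $Z_u$ is the heart of the matter, and it exploits both the finite-atom structure of the filtration and the standing assumption that $X$ takes values in the basis $\{e_1,\ldots,e_N\}$.

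Fix $u$ and work on a single atom $A$ of $\F_u$. Since $\F_{u+1}$ is generated by $\F_u$ together with $X_{u+1}$, and $X_{u+1}$ takes only the finitely many values $e_1,\ldots,e_N$, the $\F_{u+1}$-measurable variable $L_{u+1}$ is, on $A$, a function of $X_{u+1}$ alone (up to a null set); say $L_{u+1} = c_k$ on $A \cap \{X_{u+1} = e_k\}$, where each $c_k$ depends on $A$. Because each indicator $\Ind_{\{X_{u+1} = e_k\}}$ equals the linear functional $e_k^{\ast} X_{u+1}$, I may collect the constants into a vector $g_A = (c_1,\ldots,c_N)^{\ast}$ and write $L_{u+1} = g_A^{\ast} X_{u+1}$ on $A$ (components $k$ carrying no conditional mass are irrelevant and may be set to zero). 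Letting $Z_u$ be the random vector equal to $g_A$ on each atom $A$ gives an $\F_u$-measurable process with $L_{u+1} = Z_u^{\ast} X_{u+1}$ a.s. Taking the conditional expectation given $\F_u$ and using the martingale property yields $L_u = \E[L_{u+1} \mid \F_u] = Z_u^{\ast}\E[X_{u+1}\mid\F_u]$; subtracting this from $L_{u+1} = Z_u^{\ast} X_{u+1}$ and recalling $M_{u+1} = X_{u+1} - \E[X_{u+1}\mid\F_u]$ gives
$$L_{u+1} - L_u = Z_u^{\ast}\big(X_{u+1} - \E[X_{u+1}\mid\F_u]\big) = Z_u^{\ast} M_{u+1},$$
as required. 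All conditional expectations here are well defined, since $\F_t$ is coarse enough that every $\F_t$-measurable real variable is bounded, as noted before Definition \ref{DefnSeminorms}.

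For uniqueness, suppose $Z$ and $Z'$ both furnish the representation. Differencing the formula at consecutive times $t = u+1$ and $t = u$ gives $Z_u^{\ast} M_{u+1} = Z_u'^{\ast} M_{u+1}$ a.s., that is $(Z_u - Z_u')^{\ast} M_{u+1} = 0$ a.s. for each $u$. By the definition of the stochastic seminorm, $\| Z_u - Z_u' \|_{M_{u+1}}^2 = \E[((Z_u - Z_u')^{\ast} M_{u+1})^2 \mid \F_u] = 0$ almost surely, so $Z_u \sim_{M_{u+1}} Z_u'$ for every $u$, which is precisely $Z \sim_M Z'$.

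I expect the main obstacle to be the construction of $Z_u$: one must argue carefully that $L_{u+1}$ restricted to an atom of $\F_u$ genuinely depends only on $X_{u+1}$, and then verify that the resulting selection $u \mapsto Z_u$ is $\F_u$-measurable. This is exactly where the assumption that $X$ lives in the standard basis does the work, turning the a priori nonlinear ``function of $X_{u+1}$'' into the linear expression $Z_u^{\ast} X_{u+1}$; the indeterminacy of $Z_u$ in directions carrying no conditional mass is precisely what forces uniqueness to hold only up to $\sim_M$ rather than pointwise.
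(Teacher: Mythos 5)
Your proof is correct and follows essentially the same route as the paper: both construct $Z_u$ one step at a time by exploiting the fact that, conditionally on $\F_u$, an $\F_{u+1}$-measurable variable is a function of $X_{u+1}$ and hence (since $X_{u+1}$ lies in the standard basis) a linear functional $Z_u^{\ast}X_{u+1}$, after which the martingale property converts $X_{u+1}$ into $M_{u+1}$. The only cosmetic differences are that the paper invokes the Doob--Dynkin lemma where you argue atom-by-atom, and that the paper encodes the increment $L_{u+1}-L_u$ rather than $L_{u+1}$ itself --- a discrepancy of a multiple of $\mathbf{1}$, which is invisible modulo $\sim_{M_{u+1}}$.
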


\begin{proof}
By the Doob--Dynkin Lemma (see for example Shiryaev \cite[p.174]{Shiryaev1996}), for each $t$, there exists a function $g_t : \States \times \States \times \cdots \times \States \rightarrow \Reals$ such that
$$L_{t+1} = L_t + g_t(X_0,X_1,\ldots,X_{t+1}).$$
Define $Z_t$ as the $\F_t$-measurable, $\Reals^N$-valued random variable with components given by $e_k^{\ast}Z_t = g_t(X_0,X_1,\ldots,X_t,e_k)$ for each $e_k \in \States$, and notice that
$$Z_t^{\ast}X_{t+1} = g_t(X_0,X_1,\ldots,X_t,X_{t+1}) = L_{t+1} - L_t.$$
As $L$ is a martingale, we see that $Z_t^{\ast}M_{t+1} = Z_t^{\ast}X_{t+1}$. Hence, $L_{t+1} = L_t + Z_t^{\ast}M_{t+1}$, and the result follows. The uniqueness of $Z$ follows easily from the definition of $\sim_M$.
\end{proof}

\begin{mycor}\label{CorMartRepThm}
Let $W$ be a real-valued $\F_{t+1}$-measurable random variable such that $\E [W \, | \, \F_t] = 0$. Then there exists an $\Reals^N$-valued $\F_t$-measurable random variable $Z_t$ such that
$$W = Z_t^{\ast}M_{t+1} \ \ \text{a.s.}$$
Further, this variable is unique up to equivalence $\sim_{M_{t+1}}$.
\end{mycor}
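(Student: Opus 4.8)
The plan is to derive this corollary as a direct consequence of the Martingale Representation Theorem (Theorem \ref{martrepthm}), which we have already established. The key observation is that a single $\F_{t+1}$-measurable random variable $W$ with $\E[W \mid \F_t] = 0$ can be embedded into a martingale, so that the representation theorem applies and the desired $Z_t$ emerges as the relevant term in the martingale transform.

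Let me sketch the approach. First I would construct a martingale $L$ from the given data $W$. The natural choice is to set $L_u := \E[W \mid \F_u]$ for all $u \geq 0$. This is a martingale by the tower property, and by hypothesis $\E[W \mid \F_t] = 0$, so $L_u = 0$ for all $u \leq t$; moreover, since $W$ is $\F_{t+1}$-measurable, $L_u = W$ for all $u \geq t+1$. Applying Theorem \ref{martrepthm} yields an adapted process $Z$ with $L_u = L_0 + \sum_{0 \leq s < u} Z_s^{\ast} M_{s+1}$. Evaluating this at $u = t+1$ and $u = t$ and subtracting, all telescoping terms cancel and we are left with $W = L_{t+1} - L_t = Z_t^{\ast} M_{t+1}$, which is exactly the claimed representation. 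The $\F_t$-measurability of $Z_t$ is inherited from the adaptedness of $Z$.

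For uniqueness up to $\sim_{M_{t+1}}$, I would suppose $Z_t$ and $Z_t'$ both represent $W$, so that $(Z_t - Z_t')^{\ast} M_{t+1} = 0$ a.s. Taking conditional expectation of the square, the definition of $\| \cdot \|_{M_{t+1}}$ in Definition \ref{DefnSeminorms} gives $\| Z_t - Z_t' \|_{M_{t+1}}^2 = \E[((Z_t - Z_t')^{\ast} M_{t+1})^2 \mid \F_t] = 0$ a.s., which is precisely $Z_t \sim_{M_{t+1}} Z_t'$.

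I expect there to be \emph{no substantial obstacle} here, since the corollary is essentially a localized, one-step version of the martingale representation theorem; the only point requiring a small amount of care is verifying that the telescoping sum in the representation collapses to the single term $Z_t^{\ast} M_{t+1}$, which follows immediately from the fact that $L$ is constant (equal to $0$) up to time $t$ and constant (equal to $W$) from time $t+1$ onward. An alternative, more self-contained route that avoids invoking the full theorem would be to mimic its proof directly: use the Doob--Dynkin Lemma to write $W = h(X_0, \ldots, X_{t+1})$, define $Z_t$ componentwise by $e_k^{\ast} Z_t = h(X_0, \ldots, X_t, e_k)$, and check that the condition $\E[W \mid \F_t] = 0$ forces $Z_t^{\ast} M_{t+1} = Z_t^{\ast} X_{t+1} - \E[Z_t^{\ast} X_{t+1} \mid \F_t] = W$; but the embedding argument above is cleaner and reuses the machinery already in place.
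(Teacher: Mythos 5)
Your construction $L_u := \E[W \mid \F_u]$ is exactly the martingale $L_s = \Ind_{\{s>t\}}W$ used in the paper's proof, and your application of Theorem~\ref{martrepthm} followed by extracting the single increment $L_{t+1}-L_t$ is the same argument. The proposal is correct and matches the paper's approach.
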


\begin{proof}
Simply consider the martingale $L$ defined by $L_s = \Ind_{\{s>t\}}W$, and apply Theorem~\ref{martrepthm}.
\end{proof}

The following theorem provides sufficient conditions for the existence of a unique solution to our finite horizon BSDE. In general, these conditions are also necessary for a unique solution to exist for all terminal conditions $\xi$, as shown in \cite[Corollary 2]{CohenElliott2010}.

\begin{mythm}\label{FiniteHorizonExistUniq}
Suppose our driver $f$ is such that the following two assumptions hold:
\begin{enumerate}[(i)]
\item For any $Y$, if $Z \sim_M Z'$, then $f(\omega,t,Y_t,Z_t) = f(\omega,t,Y_t,Z'_t)$ a.s. for all $t$.
\item For any $z \in \Reals^N$, any $t$ and for almost all $\omega$, the map
$$y \mapsto y - f(\omega,t,y,z)$$
is a bijection $\Reals \rightarrow \Reals$.
\end{enumerate}
Then, for any $\F_T$-measurable terminal condition $\xi$, the BSDE (\ref{eq:BSDE}) has an adapted solution $(Y,Z)$. Moreover, this solution is unique, up to indistinguishability for $Y$ and $\sim_M$ for $Z$.
\end{mythm}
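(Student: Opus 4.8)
The plan is to prove existence and uniqueness by backward induction on $t$, constructing the solution one time step at a time, exploiting the fact that $\F_T$-measurability and the finite state space reduce everything to a pointwise (atom-by-atom) problem. I would work backwards from the terminal condition, at each step solving a single-period equation and invoking the Martingale Representation Corollary to extract the $Z$ component.

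First I would set the base case $Y_T := \xi$, which is $\F_T$-measurable by hypothesis. For the inductive step, suppose $Y_{t+1}$ has been constructed as an $\F_{t+1}$-measurable random variable. Rewriting the BSDE (\ref{eq:BSDE}) as a one-step recursion, I need $(Y_t, Z_t)$ satisfying
\begin{equation*}
Y_t - f(\omega,t,Y_t,Z_t) + Z_t^{\ast} M_{t+1} = Y_{t+1}.
\end{equation*}
The strategy is to decouple this into two sub-problems. Taking $\F_t$-conditional expectations and using $\E[M_{t+1}\,|\,\F_t]=0$, the $Z_t^{\ast}M_{t+1}$ term vanishes, leaving the scalar equation $Y_t - f(\omega,t,Y_t,Z_t) = \E[Y_{t+1}\,|\,\F_t]$ that must hold $\F_t$-measurably. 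Here assumption (ii) is the key: for the (as yet unknown) value of $Z_t$, the map $y \mapsto y - f(\omega,t,y,z)$ is a bijection on $\Reals$, so I can invert it pointwise on each atom to define $Y_t(\omega)$ uniquely once $Z_t$ is known. Assumption (i) guarantees this $Y_t$ depends on $Z_t$ only through its $\sim_{M_{t+1}}$-equivalence class, so the construction is well defined on equivalence classes.

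For the $Z_t$ component, I would argue as follows. Define the $\F_{t+1}$-measurable variable $W := Y_{t+1} - \E[Y_{t+1}\,|\,\F_t]$; by construction $\E[W\,|\,\F_t]=0$, so Corollary~\ref{CorMartRepThm} yields an $\F_t$-measurable $Z_t$, unique up to $\sim_{M_{t+1}}$, with $W = Z_t^{\ast}M_{t+1}$ a.s. Crucially, this $Z_t$ is determined by $Y_{t+1}$ \emph{independently of} $Y_t$ and of the driver — the martingale-increment part of $Y_{t+1}$ is fixed by $Y_{t+1}$ alone. With $Z_t$ now in hand, I define $Y_t$ by inverting the bijection of assumption (ii): $Y_t := \varphi^{-1}(\E[Y_{t+1}\,|\,\F_t])$ where $\varphi(y) = y - f(\omega,t,y,Z_t)$, done separately on each atom of $\F_t$. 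Adding the two decoupled equations recovers the full one-step relation, and since $X$ has finitely many paths, $Y_t$ and $Z_t$ are automatically integrable and bounded.

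The main obstacle — or rather the point requiring the most care — is verifying that the decoupling is genuinely valid, i.e. that solving the conditional-expectation equation for $Y_t$ and the martingale-representation equation for $Z_t$ separately does reconstruct a solution of the coupled one-step equation, and that this is the \emph{only} solution. The forward direction is immediate by addition; for uniqueness I would take any solution $(Y_t, Z_t)$, apply $\E[\,\cdot\,|\,\F_t]$ to see that $Y_t$ must satisfy the inverted scalar equation (hence is determined by assumption (ii)), and subtract to see that $Z_t^{\ast}M_{t+1}$ must equal $W$, pinning down $Z_t$ up to $\sim_{M_{t+1}}$ by the uniqueness clause of Corollary~\ref{CorMartRepThm}. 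A subtle circularity to dispatch is that assumption (ii) requires the bijection for the \emph{specific} realised $z=Z_t$; since $Z_t$ is fixed first and independently of $Y_t$, no fixed-point argument is needed and the inversion is unambiguous. Finally, assembling the per-step $\sim_{M_{t+1}}$ uniqueness of each $Z_t$ into global $\sim_M$ uniqueness follows directly from the observation, already noted in the excerpt, that $Z \sim_M Z'$ iff $Z_t \sim_{M_{t+1}} Z'_t$ for every $t$, while indistinguishability of $Y$ follows from its pointwise determination on atoms.
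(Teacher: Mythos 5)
Your proposal is correct and follows essentially the same route as the paper's proof: backward induction from $Y_T=\xi$, decoupling the one-step equation by taking $\F_t$-conditional expectations to isolate the scalar equation for $Y_t$ and the martingale-increment equation for $Z_t$, extracting $Z_t$ via Corollary~\ref{CorMartRepThm}, and then inverting the bijection of assumption (ii) with assumption (i) ensuring well-definedness on $\sim_{M_{t+1}}$-classes. No gaps; the uniqueness argument and the assembly into $\sim_M$-uniqueness match the paper's.
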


\begin{proof}
We shall construct the solution using backward induction. Note that $Y_T = \xi$ is the unique solution for $Y$ at time $T$. Suppose that we have found a solution for $Y$ at time $t+1$. The one step dynamics of (\ref{eq:BSDE}) are given by
\begin{equation}\label{eq:FiniteHorizonOneStep}
Y_t - f(\omega,t,Y_t,Z_t) + Z_t^{\ast}M_{t+1} = Y_{t+1}.
\end{equation}
Taking an $\E[\, \cdot \, | \, \F_t]$ expectation gives
\begin{equation}\label{eq:OneStepCondExp}
Y_t - f(\omega,t,Y_t,Z_t) = \E[Y_{t+1} \, | \, \F_t],
\end{equation}
and substituting this back into (\ref{eq:FiniteHorizonOneStep}), we obtain
\begin{equation}\label{eq:MartDifference}
Z_t^{\ast}M_{t+1} = Y_{t+1} - \E[Y_{t+1} \, | \, \F_t].
\end{equation}
It follows that any adapted solution $(Y_t,Z_t)$ at time $t$ must satisfy both (\ref{eq:OneStepCondExp}) and (\ref{eq:MartDifference}). By Corollary~\ref{CorMartRepThm}, there exists an $\Reals^N$-valued $\F_t$-measurable random variable $Z_t$ such that (\ref{eq:MartDifference}) holds, and this variable is unique up to equivalence $\sim_{M_{t+1}}$.

With this $Z_t$, it follows from assumption (\textit{i}) that equation (\ref{eq:OneStepCondExp}) is now uniquely determined as an equation in $Y_t$. By assumption (\textit{ii}), for almost all $\omega$, this equation has a unique solution $Y_t(\omega)$. The pair $(Y_t,Z_t)$ is then the unique solution of the BSDE at time $t$, and we can therefore construct the full solution $(Y,Z)$ by backward induction. Since for each $t$, $Y_t$ is unique up to equality a.s. and $Z_t$ is unique up to $\sim_{M_{t+1}}$, it follows that the solution $(Y,Z)$ is unique up to indistinguishability for $Y$, and $\sim_M$ for $Z$.
\end{proof}

\subsection{BSDEs on Markov chains}

Let us now consider the case when our underlying process $X$ is a discrete time, finite state Markov chain under $\Prob$. To keep our notation somewhat consistent with the continuous time theory in \cite{CohenHu2013}, we shall define the transition matrix $A_t = [a_{ij}]$ of the Markov chain $X$ by $a_{ij} = \Prob (X_{t+1}=i \mid X_t=j)$, so that the columns of the matrix give the probability distributions associated with the transitions of the chain. Note that $X$ is allowed to be time-inhomogeneous, so that its transition matrix $A_t$ can vary (deterministically) through time.

Notice that now $\E[X_{t+1} \, | \, \F_t] = A_tX_t$, so that $M_{t+1} = X_{t+1} - A_tX_t$.

\begin{mydef}
We shall say that a driver $f$ is \textit{Markovian} if at every time $t$, the value of $f$ only depends on $\omega$ through the value of $X_t$. That is, $f$ is Markovian if it can be written as $f(\omega,t,y,z) = \tilde{f}(X_t,t,y,z)$ for some function $\tilde{f} : \States \times \mathbb{N} \times \Reals \times \Reals^N \rightarrow \Reals$.
\end{mydef}

\begin{mylemma}\label{FiniteHorizonMarkovSoln}
Assume that $X$ is a Markov chain with transition matrix $A_t$, and let $f$ be a Markovian driver which satisfies the conditions of Theorem~\ref{FiniteHorizonExistUniq}. Suppose that $\xi = \varphi(X_T)$ for some deterministic function $\varphi : \States \rightarrow \Reals$, and let $(Y,Z)$ be the solution of the corresponding BSDE (\ref{eq:BSDE}). Then there exists a function $v : \{0,1,\ldots,T\} \times \States \rightarrow \Reals$ such that, for each $t$, $Y_t = v(t,X_t)$ and the $k$\textsuperscript{th} component of $Z_t$ is given by $e_k^{\ast}Z_t = v(t+1,e_k)$ (cf. \cite[Theorem 3.2]{CohenSzpruch2012}).

In particular, the solution $(Y,Z)$ is also Markovian in the sense that $Y_t$ and $Z_t$ are both deterministic functions of $X_t$ (and in fact $Z_t$ is completely deterministic). Moreover, writing $\mathbf{v}_{t+1}$ for the vector in $\Reals^N$ with entries $e_k^{\ast}\mathbf{v}_{t+1} = v(t+1,e_k)$, the function $v$ satisfies
\begin{equation}\label{eq:veqn1}
v(t,e_k) - \tilde{f}(e_k,t,v(t,e_k),\mathbf{v}_{t+1}) - \mathbf{v}_{t+1}^{\ast}A_te_k = 0
\end{equation}
for all $t \in \{0,\ldots,T-1\}$ and all $e_k \in \States$ such that $\Prob(X_t = e_k) > 0$.
\end{mylemma}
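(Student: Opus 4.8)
The plan is to run a backward induction on $t$, reusing the one-step construction already established in the proof of Theorem~\ref{FiniteHorizonExistUniq}. At the terminal time I set $v(T,e_k) := \varphi(e_k)$, so that $Y_T = \varphi(X_T) = v(T,X_T)$ holds by hypothesis. The inductive hypothesis is that $Y_{t+1} = v(t+1,X_{t+1})$ for some function $v(t+1,\cdot)$, and from this I want to deduce, in order, the claimed deterministic form of $Z_t$ and then the existence of $v(t,\cdot)$ with $Y_t = v(t,X_t)$ satisfying (\ref{eq:veqn1}).

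For the $Z_t$ step, I first observe that since $X_{t+1}$ takes values in the standard basis $\States$, the inductive hypothesis gives $Y_{t+1} = v(t+1,X_{t+1}) = \mathbf{v}_{t+1}^{\ast}X_{t+1}$. Using $\E[X_{t+1}\,|\,\F_t] = A_tX_t$ and $M_{t+1} = X_{t+1} - A_tX_t$, I then compute $Y_{t+1} - \E[Y_{t+1}\,|\,\F_t] = \mathbf{v}_{t+1}^{\ast}M_{t+1}$. Comparing this with the defining relation (\ref{eq:MartDifference}), namely $Z_t^{\ast}M_{t+1} = Y_{t+1} - \E[Y_{t+1}\,|\,\F_t]$, shows that $Z_t \sim_{M_{t+1}} \mathbf{v}_{t+1}$. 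As $Z_t$ is determined only up to $\sim_{M_{t+1}}$, I am free to select the deterministic representative $Z_t = \mathbf{v}_{t+1}$, which is trivially $\F_t$-measurable, gives exactly $e_k^{\ast}Z_t = v(t+1,e_k)$, and establishes that $Z_t$ is completely deterministic.

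For the $Y_t$ step I substitute this choice into the one-step relation (\ref{eq:OneStepCondExp}). By assumption~(\textit{i}) the value of the driver is unchanged when $Z_t$ is replaced by a $\sim_{M_{t+1}}$-equivalent variable, so using the Markovian property I may write $f(\omega,t,Y_t,Z_t) = \tilde{f}(X_t,t,Y_t,\mathbf{v}_{t+1})$; meanwhile $\E[Y_{t+1}\,|\,\F_t] = \mathbf{v}_{t+1}^{\ast}A_tX_t$. Thus, on the event $\{X_t = e_k\}$, equation (\ref{eq:OneStepCondExp}) reads $Y_t - \tilde{f}(e_k,t,Y_t,\mathbf{v}_{t+1}) = \mathbf{v}_{t+1}^{\ast}A_te_k$. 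Assumption~(\textit{ii}) guarantees that $y \mapsto y - \tilde{f}(e_k,t,y,\mathbf{v}_{t+1})$ is a bijection of $\Reals$, so this equation has a unique root; defining $v(t,e_k)$ to be that root yields $Y_t = v(t,X_t)$ and, after rearrangement, precisely (\ref{eq:veqn1}). This closes the induction and simultaneously delivers all three assertions.

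Regarding the restriction to states $e_k$ with $\Prob(X_t = e_k) > 0$: the a.s.\ clause in the definition of a BSDE solution constrains $Y_t(\omega)$ only on the support of $X_t$, so (\ref{eq:veqn1}) is forced only at those states, while at null states $v$ may be defined by the same equation but carries no probabilistic content. I do not expect a genuine obstacle, since the argument merely propagates the Markovian structure through the one-step map; the one point requiring care is the legitimacy of passing to the deterministic representative $\mathbf{v}_{t+1}$ of $Z_t$, which rests squarely on assumption~(\textit{i}) ensuring that $f$ depends on $Z$ only through its $\sim_M$-equivalence class.
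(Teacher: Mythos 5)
Your proof is correct and follows essentially the same backward induction as the paper's: identify $Z_t$ with $\mathbf{v}_{t+1}$ up to $\sim_{M_{t+1}}$, then solve the one-step equation for $Y_t$ using assumptions (\textit{i}) and (\textit{ii}). The only (cosmetic) difference is that you verify $Z_t \sim_{M_{t+1}} \mathbf{v}_{t+1}$ by computing $Y_{t+1}-\E[Y_{t+1}\,|\,\F_t]=\mathbf{v}_{t+1}^{\ast}M_{t+1}$ directly and invoking the uniqueness in Corollary~\ref{CorMartRepThm}, whereas the paper reads it off from the explicit representation $g_t$ and the fact that $\|\mathbf{1}\|_{M_{t+1}}=0$; both arguments are equivalent.
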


\begin{proof}
We shall prove the existence of the function $v$ by backward induction in $t$. Let $v(T,\, \cdot \,) := \varphi(\, \cdot \,)$, so that $Y_T = v(T,X_T)$.

Suppose that for some $t$ we have found a function $v(t+1,\, \cdot \,) : \States \rightarrow \Reals$ such that $Y_{t+1} = v(t+1,X_{t+1})$. Recall from the proofs of Theorem~\ref{martrepthm} and Theorem~\ref{FiniteHorizonExistUniq}, that the $k$\textsuperscript{th} component of $Z_t$ is given by $e_k^{\ast}Z_t = g_t(X_0,X_1,\ldots,X_t,e_k)$, where, in this case,
\begin{equation}\label{eq:ZdoesnotdependonX}
g_t(X_0,X_1,\ldots,X_t,X_{t+1}) = v(t+1,X_{t+1}) - \sum_{i=1}^{N} v(t+1,e_i)e_i^{\ast}A_tX_t.
\end{equation}
Note that the sum on the right-hand side of (\ref{eq:ZdoesnotdependonX}) does not depend on $X_{t+1}$, so that this term corresponds to the addition of a multiple of $\mathbf{1}$ to $Z_t$, where $\mathbf{1}$ denotes the vector in $\Reals^N$ with all entries equal to $1$. However, we infer from Definition~\ref{DefnSeminorms} that $\|\mathbf{1}\|_{M_{t+1}} = 0$, so the addition of multiples of $\mathbf{1}$ does not change the value of $Z_t$ up to equivalence $\sim_{M_{t+1}}$. We may therefore ignore this term, and deduce that $e_k^{\ast}Z_t = v(t+1,e_k)$ for all $e_k \in \States$.

By assumption, the driver $f$ can be written as $f(\omega,t,y,z) = \tilde{f}(X_t,t,y,z)$ for some function $\tilde{f}$. From the proof of Theorem~\ref{FiniteHorizonExistUniq}, $Y_t$ is given by the (a.s. unique) solution of the equation:
$$Y_t - \tilde{f}(X_t,t,Y_t,Z_t) = \E[Y_{t+1} \, | \, \F_t].$$
Hence, since $Z_t$ and $\E[Y_{t+1} \, | \, \F_t]$ are both deterministic functions of $X_t$, it follows that the same is true of $Y_t$, so there must exist a function $v(t,\, \cdot \,) : \States \rightarrow \Reals$ such that $Y_t = v(t,X_t)$.
We therefore deduce the existence of the required function $v$ by backward induction. The equation (\ref{eq:veqn1}) follows from the one-step dynamics of (\ref{eq:BSDE}).
\end{proof}

\section{Infinite Horizon BSDEs}

\subsection{Discounted BSDEs}

We now introduce a `discounted' BSDE as an infinite horizon equation of the form:
\begin{equation}\label{eq:DiscountedBSDE}
Y_T = Y_t - \sum_{t \leq u < T} \big( f(\omega,u,Z_u) - \alpha Y_u \big) + \sum_{t \leq u < T} Z_u^{\ast}M_{u+1},
\end{equation}
where $\alpha > 0$ is a fixed constant. Here $T$ is no longer a fixed terminal time. Instead, both $t$ and $T$ may take any finite values such that $0 \leq t < T$. A solution of this equation is an adapted process $(Y,Z)$ such that (\ref{eq:DiscountedBSDE}) holds a.s. for every $0 \leq t < T$. In this section we will show that this equation admits unique bounded solutions for a suitable class of drivers. Similar results are considered in a continuous time setting in Royer \cite{Royer2004} and in \cite{CohenHu2013}, however the discrete time case has not been previously considered.

We will return to the case when $X$ is a Markov chain in Section 2.2. However, the theory we present here for discounted BSDEs holds much more generally.

\begin{myremark}
Recall Definition~\ref{DefnSeminorms}. In light of our infinite horizon setting, we will henceforth write $Z \sim_M Z'$ if $Z_t \sim_{M_{t+1}} Z'_t$ for all $t \geq 0$.
\end{myremark}

\begin{myprop}\label{ChangeMeasure}
Suppose that for any $t \geq 0$, $y \in \Reals$, $z,z' \in \Reals^N$ and any atom $F \in \F_t$, the driver $f$ satisfies
\begin{equation}\label{eq:Comparison}
f(\omega,t,y,z) - f(\omega,t,y,z') > \min_{i \in \J_t^F} \big\{(z - z')^{\ast}(e_i - \E[X_{t+1} \, | \, \F_t])\big\}
\end{equation}
on $F$, where $\J_t^F := \{i : \Prob(X_{t+1} = e_i \, | \, F) > 0\}$, unless
$$\min_{i \in \J_t^F} \big\{(z - z')^{\ast}(e_i - \E[X_{t+1} \, | \, \F_t])\big\} = 0,$$
in which case the inequality can be replaced by an equality.
Then, for any adapted processes $Y,Z,Z'$, there exists a probability measure $\Q$ on $(\Omega,\F)$, which is equivalent to $\Prob$ on $\F_t$ for every finite $t$, and such that
$$\tilde{M}_t := -\sum_{0 \leq u < t} \big( f(\omega,u,Y_u,Z_u) - f(\omega,u,Y_u,Z'_u) \big) + \sum_{0 \leq u < t} (Z_u - Z'_u)^{\ast}M_{u+1}$$
is a martingale under $\Q$.
\end{myprop}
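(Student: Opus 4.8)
We need to construct a probability measure $\Q$ equivalent to $\Prob$ on each $\F_t$ (for finite $t$) under which the process $\tilde{M}$ is a martingale. The key structural point is that $\tilde{M}$ is already a discrete-time process with increments

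$$\tilde{M}_{t+1} - \tilde{M}_t = -\big(f(\omega,t,Y_t,Z_t) - f(\omega,t,Y_t,Z'_t)\big) + (Z_t - Z'_t)^{\ast}M_{t+1}.$$

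Under $\Prob$, the martingale difference $M_{t+1} = X_{t+1} - \E[X_{t+1}|\F_t]$ has conditional mean zero, so the increment of $\tilde{M}$ has conditional $\Prob$-mean equal to $-(f(\cdot,Z_t) - f(\cdot,Z'_t))$, which need not vanish. The role of the change of measure is precisely to tilt the one-step transition probabilities so as to absorb this drift.

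**The plan.** I would construct $\Q$ by specifying its one-step conditional transition probabilities on each atom, and then invoke Ionescu–Tulcea (or simply the fact that on a discrete filtration a consistent family of one-step kernels defines a unique measure) to build $\Q$ on $\F_\infty$. Fix an atom $F \in \F_t$ and let $\J_t^F = \{i : \Prob(X_{t+1} = e_i \mid F) > 0\}$. On $F$, the quantities $f(\omega,t,Y_t,Z_t)$, $f(\omega,t,Y_t,Z'_t)$ and $Z_t - Z'_t$ are all constants (since they are $\F_t$-measurable). I would seek new transition weights $q_i := \Q(X_{t+1} = e_i \mid F)$, supported on the same index set $\J_t^F$ (to preserve equivalence), such that under these weights the conditional expectation of the increment vanishes:

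$$\sum_{i \in \J_t^F} q_i \big[(Z_t - Z'_t)^{\ast}(e_i - \E[X_{t+1}|\F_t])\big] = f(\omega,t,Y_t,Z_t) - f(\omega,t,Y_t,Z'_t).$$

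Writing $d_i := (Z_t - Z'_t)^{\ast}(e_i - \E[X_{t+1}|\F_t])$ and $D := f(\cdot,Z_t) - f(\cdot,Z'_t)$, this is the requirement $\sum_i q_i d_i = D$, with $q_i \geq 0$, $\sum_i q_i = 1$, and crucially $q_i > 0$ for all $i \in \J_t^F$ to retain equivalence with $\Prob$ on $F$.

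**Where the hypothesis enters.** The existence of such a strictly positive probability vector $q$ is exactly what assumption (\ref{eq:Comparison}) guarantees. The set $\{\sum_i q_i d_i : q \in \Delta\}$, as $q$ ranges over the simplex on $\J_t^F$, is the interval $[\min_i d_i, \max_i d_i]$, and the \emph{interior} of this interval is achieved by strictly positive $q$. The hypothesis asserts $D > \min_{i \in \J_t^F} d_i$ (with the equality clause handling the degenerate case $\min_i d_i = 0$). A symmetric bound from below on $D$ is needed to conclude $D < \max_i d_i$; I expect this follows from applying (\ref{eq:Comparison}) with the roles of $z$ and $z'$ exchanged, since $\min_i\{-d_i\} = -\max_i d_i$, giving $-D > -\max_i d_i$, i.e. $D < \max_i d_i$. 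Thus $D$ lies strictly inside $[\min_i d_i, \max_i d_i]$, so a strictly positive $q$ achieving $\sum_i q_i d_i = D$ exists. The anticipated main obstacle is the careful handling of the degenerate cases: when $\min_i d_i = 0$ (equality permitted) or when $\J_t^F$ is a singleton, so that $d_i = D = 0$ is forced and $q$ coincides with the $\Prob$-weights — I would verify that the hypothesis is consistent with these boundary situations and that equivalence is maintained.

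**Assembling $\Q$.** Having chosen, on each atom of each $\F_t$, a strictly positive transition vector $q^{(t,F)}$ supported on $\J_t^F$, these one-step kernels are mutually consistent and define a unique probability measure $\Q$ on $(\Omega,\F_\infty)$. Because each kernel is supported on exactly the same index set as the $\Prob$-kernel and is strictly positive there, $\Q$ and $\Prob$ assign positive mass to precisely the same atoms of $\F_t$; hence $\Q \sim \Prob$ on $\F_t$ for every finite $t$. By construction $\E_{\Q}[\tilde{M}_{t+1} - \tilde{M}_t \mid \F_t] = 0$ on every atom, so $\tilde{M}$ is a $\Q$-martingale, completing the argument.
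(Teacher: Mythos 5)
Your proposal is correct and follows essentially the same route as the paper: the paper likewise works atom by atom, uses the zero $\Prob$-conditional mean of the increments to place $f(\omega,t,Y_t,Z_t)-f(\omega,t,Y_t,Z'_t)$ in the interior of the convex hull of the values $(Z_t-Z'_t)^{\ast}(e_i-\E[X_{t+1}\mid\F_t])$ (obtaining the upper bound by swapping $z$ and $z'$, and handling the degenerate case $\min=0$ via the equality clause), chooses strictly positive weights $\mu_i^F$ accordingly, and extends to $\F$ by Carath\'eodory rather than Ionescu--Tulcea — an immaterial difference. The boundary checks you flag (singleton $\J_t^F$, $\min=0$) resolve exactly as you anticipate, since a zero weighted average with strictly positive weights forces all the $d_i$ to vanish.
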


\begin{proof}
We will define $\Q$ inductively in $t$. First let $\Q|_{\F_0} = \Prob|_{\F_0}$.

Next, suppose that we have defined $\Q$ on $\F_t$ for some $t \geq 0$. Let $F$ be an atom of $\F_t$ (so that $\Q(F) > 0$), and let
$$S_t^F := \big\{(Z_t - Z'_t)^{\ast}(e_i - \E_{\Prob}[X_{t+1} \, | \, \F_t]) : i \in \J_t^F\big\},$$
where here all variables are evaluated on $F$. Note that
\begin{equation}\label{eq:MartDiffExpZero}
\E_{\Prob}\big[(Z_t - Z'_t)^{\ast}(X_{t+1} - \E_{\Prob}[X_{t+1} \, | \, \F_t]) \, \big| \, F\big] = 0,
\end{equation}
and hence that $\min \big(S_t^F\big) \leq 0$ and $\max \big(S_t^F\big) \geq 0$. Suppose that $\min \big(S_t^F\big) < 0$. By assumption, we can swap the roles of $z$ and $z'$ in (\ref{eq:Comparison}), and deduce that
$$f(\omega,t,Y_t,Z_t) - f(\omega,t,Y_t,Z'_t) \in \text{conv}^{\circ}\big(S_t^F\big),$$
where here conv$^{\circ}(\cdot)$ denotes the interior of the convex hull of a subset of $\Reals$.

If $\min \big(S_t^F\big) = 0$, then it follows from (\ref{eq:MartDiffExpZero}) that $S_t^F = \{0\}$, and hence, by assumption, that
$$f(\omega,t,Y_t,Z_t) - f(\omega,t,Y_t,Z'_t) = 0.$$
It follows from the definition of convexity that, in either case, for each $i \in \J_t^F$, there exists $\mu_i^F > 0$ such that $\sum_{i \in \J_t^F} \mu_i^F = 1$, and
\begin{equation}\label{eq:ReassignProbMass}
f(\omega,t,Y_t,Z_t) - f(\omega,t,Y_t,Z'_t) = \sum_{i \in \J_t^F} \mu_i^F (Z_t - Z'_t)^{\ast}(e_i - \E_{\Prob}[X_{t+1} \, | \, \F_t]).
\end{equation}
Define $\Q(\{X_{t+1} = e_i\} \cap F) := \mu_i^F\Q(F)$ for $i \in \J_t^F$, and $\Q(\{X_{t+1} = e_i\} \cap F) := 0$ for $i \notin \J_t^F$.

The measure $\Q$ is now defined on the whole of $\F_{t+1}$ and therefore, by induction, on $\bigcup_{s \geq 0} \F_s$. By Carath\'{e}odory's extension theorem (see for example \cite[p.152]{Shiryaev1996}), there exists a unique extension of $\Q$ to a probability measure on $\F = \sigma \big(\bigcup_{s \geq 0} \F_s\big)$. By construction, $\Q$ is equivalent to $\Prob$ on $\F_t$, for any $t \geq 0$. Further, for any $t$, $F$ and $i \in \J_t^F$ as above, $\Q(\{X_{t+1} = e_i\} \, | \, F) = \mu_i^F$.

Note that
\begin{align*}
\E_{\Q}\big[\tilde{M}_{t+1} - \tilde{M}_t \, \big| \, \F_t\big] &= -\big(f(\omega,t,Y_t,Z_t) - f(\omega,t,Y_t,Z'_t)\big) \\
&\ \ \ \ \ \ + (Z_t - Z'_t)^{\ast}\big(\E_{\Q}[X_{t+1} \, | \, \F_t] - \E_{\Prob}[X_{t+1} \, | \, \F_t]\big).
\end{align*}
It follows from (\ref{eq:ReassignProbMass}) that the above expression is equal to zero on every atom $F$ of $\F_t$. Hence, as required, $\tilde{M}$ is a martingale under $\Q$.
\end{proof}

\begin{myremark}
Note that the measure $\Q$ constructed in the proof of Proposition~\ref{ChangeMeasure} is by no means uniquely defined, as in general there will be many ways of choosing the constants $\mu_i^F$ such that the required conditions hold. We also note that in general $\Q$ will not be equivalent to $\Prob$ on $\F$, which is why it was important that we did not assume that $\F_0$ contains all the $\Prob$-null sets in $\F$.
\end{myremark}

\begin{mydef}\label{DefnLipschitz}
We shall say that a driver $f$ is Lipschitz in $z$, if there exists $L > 0$ such that, for any $t$ and any $\F_t$-measurable random variables $Y_t,Z_t,Z'_t$,
$$\big| f(\omega,t,Y_t,Z_t) - f(\omega,t,Y_t,Z'_t) \big| \leq L \, \big\| Z_t - Z'_t \big\|_{M_{t+1}} \ \ \text{a.s.}$$
\end{mydef}

We will now proceed to prove a result on the existence and uniqueness of bounded solutions to our discounted BSDE. The proof is an adaptation of that of \cite[Theorem 2.10]{CohenHu2013} into our discrete time setting.

\begin{mythm}\label{SolnDiscountedBSDE}
Let $\alpha > 0$, and suppose that the driver $f : \Omega \times \mathbb{N} \times \Reals^N \rightarrow \Reals$ is independent of $y$, and satisfies the following conditions:
\begin{itemize}
\item $f$ is Lipschitz in $z$, with Lipschitz constant $L$,
\item $f$ satisfies the conditions of Proposition~\ref{ChangeMeasure}, and
\item $f(\omega,t,0)$ is uniformly bounded by some $C > 0$.
\end{itemize}
Then there exists an adapted solution $(Y,Z)$ to the (infinite horizon) discounted BSDE
\begin{equation}\label{eq:DiscountedBSDE2}
Y_T = Y_t - \sum_{t \leq u < T} \big( f(\omega,u,Z_u) - \alpha Y_u \big) + \sum_{t \leq u < T} Z_u^{\ast}M_{u+1},
\end{equation}
satisfying $|Y_t| \leq C/\alpha$ for all $t$, and this solution is unique among bounded adapted solutions.

Furthermore, if $(Y^T,Z^T)$ denotes the (unique) adapted solution to the finite horizon BSDE
\begin{equation}\label{eq:DiscountedUpToTimeT}
0 = Y_t^T - \sum_{t \leq u < T} \big( f(\omega,u,Z_u^T) - \alpha Y_u^T \big) + \sum_{t \leq u < T} \big(Z_u^T\big)^{\ast}M_{u+1},
\end{equation}
then $Y_t^T \rightarrow Y_t$ in $L^{\infty}$ as $T \rightarrow \infty$, uniformly on finite sets in $t$.
\end{mythm}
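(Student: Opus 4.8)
The plan is to construct the infinite-horizon solution as a limit of finite-horizon solutions, using the discounting factor $\alpha > 0$ to force contraction. First I would establish the uniform bound. For each fixed $T$, Theorem~\ref{FiniteHorizonExistUniq} (applied to the driver $(\omega,u,y,z) \mapsto f(\omega,u,z) - \alpha y$, which is a bijection in $y$ precisely because $\alpha > 0$) gives a unique solution $(Y^T, Z^T)$ to \eqref{eq:DiscountedUpToTimeT}. Taking conditional expectations of the one-step dynamics and using that $\|f(\omega,t,0)\| \leq C$, I would show by backward induction on $t$ that $|Y_t^T| \leq C/\alpha$ for all $t$; the key identity is $(1+\alpha)Y_t^T = \E[Y_{t+1}^T \mid \F_t] + f(\omega,t,Z_t^T)$, which bounds $(1+\alpha)|Y_t^T|$ by $\|Y_{t+1}^T\|_\infty + C + L\|Z_t^T\|_{M_{t+1}}$ after separating $f(\cdot,Z)$ from $f(\cdot,0)$, but one must handle the $Z$-term carefully --- the clean route is to bound $|Y_t^T|$ directly via $(1+\alpha)^{-1}(\|Y_{t+1}^T\|_\infty + C)$ only after establishing that the $Z$-dependence is controlled.

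The heart of the argument is the Cauchy estimate. I would fix $T' > T$ and compare $(Y^T, Z^T)$ with $(Y^{T'}, Z^{T'})$ on the common time interval. Setting $\delta Y := Y^{T'} - Y^T$ and $\delta Z := Z^{T'} - Z^T$, the difference satisfies a BSDE whose driver difference is $f(\omega,u,Z_u^{T'}) - f(\omega,u,Z_u^T)$. Here I would invoke Proposition~\ref{ChangeMeasure} to produce a measure $\Q$, equivalent to $\Prob$ on each $\F_t$, under which $\tilde{M}_t = -\sum_{0\le u<t}(f(\omega,u,Z_u^{T'}) - f(\omega,u,Z_u^T)) + \sum_{0\le u<t}(Z_u^{T'}-Z_u^T)^\ast M_{u+1}$ is a martingale. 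Under $\Q$ the difference equation linearizes: $\delta Y_t = \E_\Q[(1+\alpha)^{-1}\delta Y_{t+1} \mid \F_t]$ after rearranging the discounted dynamics, so that iterating gives $|\delta Y_t| \leq (1+\alpha)^{-(T-t)}\, \E_\Q[|\delta Y_T| \mid \F_t]$. Since $|\delta Y_T| = |Y_T^{T'}| \leq C/\alpha$ (because $Y_T^T = 0$ by the terminal condition of \eqref{eq:DiscountedUpToTimeT}), this yields
\begin{equation*}
\|Y_t^{T'} - Y_t^T\|_\infty \leq \frac{C}{\alpha}(1+\alpha)^{-(T-t)}.
\end{equation*}
This is geometric in $T - t$, so $(Y_t^T)_T$ is Cauchy in $L^\infty$, uniformly for $t$ in any finite set, and the limit $Y_t := \lim_{T\to\infty} Y_t^T$ exists.

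Having the limit $Y$, I would recover $Z$ via the martingale representation: from \eqref{eq:MartDifference}-type reasoning, $Z_t^\ast M_{t+1} = Y_{t+1} - \E[Y_{t+1}\mid\F_t]$ defines $Z_t$ uniquely up to $\sim_{M_{t+1}}$ by Corollary~\ref{CorMartRepThm}, and I would verify that $(Y,Z)$ satisfies \eqref{eq:DiscountedBSDE2} by passing to the limit in the one-step dynamics of $(Y^T, Z^T)$, using the Lipschitz property to control the driver term. For uniqueness among bounded solutions, I would take two bounded solutions, apply Proposition~\ref{ChangeMeasure} to their difference to obtain an equivalent martingale measure, and run the same contraction: the difference $\delta Y$ satisfies $|\delta Y_t| \leq (1+\alpha)^{-(T-t)}\E_\Q[|\delta Y_T|\mid\F_t]$ for every $T > t$, and since both solutions are bounded the right-hand side vanishes as $T \to \infty$, forcing $\delta Y \equiv 0$ and hence $\delta Z \sim_M 0$. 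The main obstacle I anticipate is the measure-change step: I must confirm that the hypotheses of Proposition~\ref{ChangeMeasure} genuinely apply to the driver \emph{differences} arising here (the proposition is stated for a single driver $f$ with two arguments $z, z'$, which matches $Z^{T'}$ and $Z^T$), and that the resulting $\Q$, though not equivalent to $\Prob$ on all of $\F$, is equivalent on each $\F_t$ so that the a.s.\ bounds transfer back to $\Prob$.
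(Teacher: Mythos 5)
Your proposal follows the paper's own route almost step for step: finite-horizon solutions via Theorem~\ref{FiniteHorizonExistUniq}, a uniform bound, a geometric Cauchy estimate in $T$ obtained by linearising the difference of two finite-horizon solutions under the measure from Proposition~\ref{ChangeMeasure}, recovery of $Z$ via Corollary~\ref{CorMartRepThm}, passage to the limit in the one-step dynamics using the Lipschitz property, and the same contraction argument for uniqueness. Your closing worry is unfounded: Proposition~\ref{ChangeMeasure} is stated for arbitrary adapted processes $Y,Z,Z'$, so it applies verbatim with $Z = Z^{T'}$, $Z' = Z^{T}$, and equivalence of $\Q$ with $\Prob$ on each $\F_t$ is exactly what lets the a.s.\ bounds transfer.

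The one place where you leave a genuine hole is the uniform bound $|Y_t^T| \leq C/\alpha$. You correctly observe that the naive estimate $(1+\alpha)|Y_t^T| \leq \|Y_{t+1}^T\|_\infty + C + L\|Z_t^T\|_{M_{t+1}}$ does not close, and you then defer to ``the clean route \ldots only after establishing that the $Z$-dependence is controlled'' without saying how. This matters because your Cauchy estimate feeds on $|Y_T^{T'}| \leq C/\alpha$, so the whole chain rests on this bound. The resolution is a \emph{second} application of Proposition~\ref{ChangeMeasure}, this time comparing $Z^T$ with the zero process: under the resulting measure $\Q_2^T$ the terms $f(\omega,t,Z_t^T) - f(\omega,t,0)$ and $(Z_t^T)^\ast M_{t+1}$ form a martingale increment, so the one-step dynamics of \eqref{eq:DiscountedUpToTimeT} give $Y_t^T = (1+\alpha)^{-1}\E_{\Q_2^T}\big[Y_{t+1}^T + f(\omega,t,0) \,\big|\, \F_t\big]$, and iterating to $r = T$ with $Y_T^T = 0$ bounds $|Y_t^T|$ by the geometric series $\sum_{k \geq 0} (1+\alpha)^{-(k+1)} C = C/\alpha$, with no $Z$-term surviving. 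Since this is the same device you already deploy for the Cauchy step, the fix is within reach, but as written the bound is asserted rather than proved.
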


\begin{proof}
First we will show that if a bounded solution exists, then it is unique. Suppose we have two bounded solutions $(Y,Z)$ and $(Y',Z')$ to (\ref{eq:DiscountedBSDE2}). Let $\delta Y = Y - Y'$ and $\delta Z = Z - Z'$, and consider the one-step dynamics, given by
\begin{equation}\label{eq:OneStepDelta}
\delta Y_{t+1} = (1 + \alpha)\delta Y_t - \big(f(\omega,t,Z_t) - f(\omega,t,Z'_t)\big) + \delta Z_t^{\ast}M_{t+1}.
\end{equation}
By Proposition~\ref{ChangeMeasure}, there exists a probability measure $\Q_1$ such that
$$-\sum_{0 \leq u < t} \big( f(\omega,u,Z_u) - f(\omega,u,Z'_u) \big) + \sum_{0 \leq u < t} \delta Z_u^{\ast}M_{u+1}$$
is a martingale under $\Q_1$. It follows that $\delta Y_t = (1 + \alpha)^{-1}\E_{\Q_1}[\delta Y_{t+1} \, | \, \F_t]$, and hence, by induction, that
$$|\delta Y_t| \leq (1 + \alpha)^{-(r-t)}\E_{\Q_1}\big[|\delta Y_r| \, \big| \, \F_t\big]$$
for all $r > t$. Note that the above holds $\Q_1$-a.s., but since $\Q_1$ is equivalent to $\Prob$ on $\F_t$ for every finite $t$, there is no distinction here between $\Q_1$-a.s. and $\Prob$-a.s. Since $Y$ and $Y'$ are both uniformly bounded, the same is true of $\delta Y$. We then deduce, upon taking the limit as $r \rightarrow \infty$ in the above, that $\delta Y_t = 0$ a.s. It follows that $Y = Y'$, up to $\Prob$ and $\Q_1$-indistinguishability. Substituting back into (\ref{eq:OneStepDelta}), we deduce that $\delta Z_t^{\ast}M_{t+1} = 0$, and hence that $Z \sim_M Z'$.

We now proceed to prove existence. It follows from the Lipschitz condition that the driver of (\ref{eq:DiscountedUpToTimeT}) satisfies the assumptions of Theorem~\ref{FiniteHorizonExistUniq}, so the solution $(Y^T,Z^T)$ exists uniquely. First we show that $Y^T$ is bounded.

By Proposition~\ref{ChangeMeasure}, there exists a probability measure $\Q_2^T$ such that
$$-\sum_{0 \leq u < t} \big( f(\omega,u,Z_u^T) - f(\omega,u,0) \big) + \sum_{0 \leq u < t} \big(Z_u^T\big)^{\ast}M_{u+1}$$
is a martingale under $\Q_2^T$, where we let $Z^T_t = 0$ for $t \geq T$. It follows from the one-step dynamics of (\ref{eq:DiscountedUpToTimeT}) that
$$Y^T_t = (1 + \alpha)^{-1}\E_{\Q_2^T}\big[Y^T_{t+1} + f(\omega,t,0) \, \big| \, \F_t\big].$$
Using the above as the basis of an inductive argument yields
$$Y^T_t = \E_{\Q_2^T}\Bigg[(1 + \alpha)^{-(r-t)}Y^T_r + \sum_{k=0}^{r-t-1} (1 + \alpha)^{-(k+1)}f(\omega,t+k,0) \ \Bigg| \ \F_t\Bigg]$$
for all $t < r \leq T$.
Setting $r = T$ and using the fact that $Y^T_T = 0$, we deduce that $Y^T$ is uniformly bounded by $C/\alpha$.

Let $T' > T$. Taking the difference of the one-step dynamics of $Y^T$ and $Y^{T'}$ we obtain
\begin{align}
Y^T_{t+1} - Y^{T'}_{t+1} &= (1 + \alpha)\big(Y^T_t - Y^{T'}_t\big) - \Big(f\big(\omega,t,Z^T_t\big) - f\big(\omega,t,Z^{T'}_t\big)\Big) \label{eq:OneStepTT'} \\
&\ \ \ + \big(Z^T_t - Z^{T'}_t\big)^{\ast}M_{t+1}. \nonumber
\end{align}
By Proposition~\ref{ChangeMeasure}, there exists a probability measure $\Q_3^{T,T'}$ such that
$$-\sum_{0 \leq u < t} \Big(f\big(\omega,u,Z^T_u\big) - f\big(\omega,u,Z^{T'}_u\big)\Big) + \sum_{0 \leq u < t}\big(Z^T_u - Z^{T'}_u\big)^{\ast}M_{u+1}$$
is a martingale under $\Q_3^{T,T'}$. It follows by induction that
$$Y^T_t - Y^{T'}_t = (1 + \alpha)^{-(r-t)}\E_{\Q_3^{T,T'}}\big[Y^T_r - Y^{T'}_r \, \big| \, \F_t\big],$$
for all $t < r \leq T$. Then, by the boundedness established above,
$$\big|Y^T_t - Y^{T'}_t\big| \leq (1 + \alpha)^{-(T-t)}\E_{\Q_3^{T,T'}}\Big[\big|Y^{T'}_T\big| \, \Big| \, \F_t\Big] \leq \frac{C(1 + \alpha)^{-(T-t)}}{\alpha}.$$
Hence, we see that $\big\{Y^T_t\big\}_{T \geq t}$ is a Cauchy sequence in $T$ with respect to the $L^{\infty}$ norm. Therefore, for any $t \geq 0$, there exists an $\F_t$-measurable random variable $Y_t$, such that $Y^T_t \rightarrow Y_t$ in $L^{\infty}$ as $T \rightarrow \infty$, and this convergence is clearly uniform on finite sets in $t$. Further, $Y = \{Y_t\}_{t \geq 0}$ is also uniformly bounded by $C/\alpha$.

Taking an $\E_{\Prob}[\, \cdot \, | \, \F_t]$ expectation in (\ref{eq:OneStepTT'}), we deduce that
$$\big|\big(Z^T_t - Z^{T'}_t\big)^{\ast}M_{t+1}\big| \leq \big|Y^T_{t+1} - Y^{T'}_{t+1}\big| + \E_{\Prob}\Big[\big|Y^T_{t+1} - Y^{T'}_{t+1}\big| \, \Big| \, \F_t\Big],$$
and it follows that $\big\{\big(Z^T_t\big)^{\ast}M_{t+1}\big\}_{T>t}$ is a Cauchy sequence in $T$. Hence, there exists an $\F_{t+1}$-measurable random variable $U_{t+1}$ such that $\big(Z^T_t\big)^{\ast}M_{t+1} \rightarrow U_{t+1}$ in $L^{\infty}$ as $T \rightarrow \infty$. By Corollary~\ref{CorMartRepThm}, there exists an $\F_t$-measurable random variable $Z_t$ such that $U_{t+1} = Z_t^{\ast}M_{t+1}$.

Recall the one-step dynamics of (\ref{eq:DiscountedUpToTimeT}), given by
\begin{equation}\label{OneStepDiscounted}
Y^T_{t+1} = Y^T_t - \big(f(\omega,t,Z^T_t) - \alpha Y^T_t\big) + \big(Z^T_t\big)^{\ast}M_{t+1}.
\end{equation}
It follows from the Lipschitz condition that $f(\omega,t,Z^T_t) \rightarrow f(\omega,t,Z_t)$ in $L^{\infty}$ as $T \rightarrow \infty$. Finally, taking the limit as $T \rightarrow \infty$ in (\ref{OneStepDiscounted}), we deduce that
$$Y_{t+1} = Y_t - \big(f(\omega,t,Z_t) - \alpha Y_t\big) + Z_t^{\ast}M_{t+1},$$
and that this holds for every $t \geq 0$. Note that these are the one-step dynamics of (\ref{eq:DiscountedBSDE2}). Hence, $(Y,Z)$ is a bounded adapted solution of (\ref{eq:DiscountedBSDE2}), which we know is unique among such solutions.
\end{proof}

\subsection{$\gamma$-balanced drivers}

We now return to the case when our underlying process $X$ is a (possibly time-inhomogeneous) Markov chain under $\Prob$, with associated transition matrix $A_t$.

\begin{mydef}\label{DefnGammaControlled}
Let $A$ and $B$ be transition matrices and let $\gamma \in (0,1)$. We shall say that $B$ is $\gamma$-controlled by $A$ (or $B$ is controlled by $A$ with constant $\gamma$), and write $B \succeq_{\gamma} A$, if $B \geq \gamma A$ componentwise. If $B \succeq_{\gamma} A$ and $A \succeq_{\gamma} B$ then we will write $A \sim_{\gamma} B$.
\end{mydef}

\begin{myremark}
Writing $A = [a_{ij}]$ and $B = [b_{ij}]$, it is easy to check that the statement $B \succeq_{\gamma} A$ is equivalent to the condition
$$\gamma \leq \inf \left\{ \frac{b_{ij}}{a_{ij}} \ \middle| \ i,j \ \text{such that} \ a_{ij} \neq 0 \right\}.$$
We also note that $A \sim_{\gamma} B$ if and only if $A$ and $B$ have the same pattern of zero entries, and
$$\frac{b_{ij}}{a_{ij}} \in [\gamma,\gamma^{-1}]$$
for all $i,j$ such that $a_{ij} \neq 0$.
\end{myremark}

\begin{mydef}\label{DefnGammaBalanced}
We shall say that a driver $f$ is $\gamma$-balanced if for some $\gamma \in (0,1)$, there exists a random field $\psi : \Omega \times \mathbb{N} \times \Reals^N \times \Reals^N \rightarrow \Reals^N$ which is adapted (in the same sense as $f$), and such that for any $t \geq 0$, $y \in \Reals$, $z,z' \in \Reals^N$ and for almost all $\omega$, we have
\begin{enumerate}[(i)]
\item $f(\omega,t,y,z) - f(\omega,t,y,z') = (z-z')^{\ast}\big(\psi(\omega,t,z,z') - A_tX_t\big)$,
\item for each $e_i \in \States$,
$$\frac{e_i^{\ast}\psi(\omega,t,z,z')}{e_i^{\ast}A_tX_t} \in [\gamma,\gamma^{-1}],$$
where here $0/0 := 1$, and
\item $\mathbf{1}^{\ast}\psi(\omega,t,z,z') = 1$, where we recall that $\mathbf{1}$ denotes the vector in $\Reals^N$ with all entries equal to $1$.
\end{enumerate}
\end{mydef}

\begin{mylemma}\label{GammaBalLip}
If a driver $f$ is $\gamma$-balanced, then it is Lipschitz in $z$, with Lipschitz constant $1/\gamma$.
\end{mylemma}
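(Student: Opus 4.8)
The plan is to verify Definition~\ref{DefnLipschitz} directly, fixing a time $t$ and working on a single atom $F$ of $\F_t$, on which $X_t$, $Y_t$, $Z_t$ and $Z'_t$ are all constant; I write $z = Z_t$, $z' = Z'_t$, $y = Y_t$ restricted to $F$. Since the state $X_t$ is fixed on $F$, the conditional law of $X_{t+1}$ is the probability vector $p := A_tX_t$, with entries $p_i = e_i^{\ast}A_tX_t = \Prob(X_{t+1} = e_i \mid F)$. I also set $q_i := e_i^{\ast}\psi(\omega,t,z,z')$ and $d_i := e_i^{\ast}(z - z')$, so that both quantities to be compared become, on $F$, finite sums indexed by $\{1,\dots,N\}$. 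Since $\psi$ is adapted, carrying the bound out atom by atom yields the almost sure statement required.

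First I translate both sides into this notation. Using $M_{t+1} = X_{t+1} - A_tX_t$ together with Definition~\ref{DefnSeminorms}, the squared seminorm is the conditional variance
\[
\|z - z'\|_{M_{t+1}}^2 = \E\big[\big((z-z')^{\ast}M_{t+1}\big)^2 \,\big|\, F\big] = \sum_{i} p_i (d_i - \bar d)^2, \qquad \bar d := \sum_i p_i d_i .
\]
On the other side, Definition~\ref{DefnGammaBalanced}(i) gives $f(\omega,t,y,z) - f(\omega,t,y,z') = (z-z')^{\ast}(\psi - A_tX_t) = \sum_i (q_i - p_i)d_i$.

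The key algebraic step is to centre this last sum. Since $p$ is a probability vector and Definition~\ref{DefnGammaBalanced}(iii) gives $\mathbf{1}^{\ast}\psi = 1$, we have $\sum_i q_i = \sum_i p_i = 1$, hence $\sum_i (q_i - p_i) = 0$ and therefore $\sum_i (q_i - p_i)\bar d = 0$. Consequently
\[
f(\omega,t,y,z) - f(\omega,t,y,z') = \sum_i (q_i - p_i)(d_i - \bar d).
\]
Matching this against the centred variance above, I would apply the Cauchy--Schwarz inequality with weights $p_i$, writing each summand over the indices with $p_i > 0$ as $\tfrac{q_i - p_i}{\sqrt{p_i}}\cdot\sqrt{p_i}\,(d_i - \bar d)$, to obtain
\[
\big| f(\omega,t,y,z) - f(\omega,t,y,z') \big| \leq \Big(\sum_{i:\,p_i>0} \tfrac{(q_i - p_i)^2}{p_i}\Big)^{1/2}\,\|z - z'\|_{M_{t+1}} .
\]

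It then remains to bound the first factor by $1/\gamma$, and this is where Definition~\ref{DefnGammaBalanced}(ii) enters. For $p_i > 0$ the condition $q_i/p_i \in [\gamma,\gamma^{-1}]$ gives $\tfrac{(q_i-p_i)^2}{p_i} = p_i(q_i/p_i - 1)^2 \le p_i(\gamma^{-1}-1)^2$, since $|s - 1| \le \gamma^{-1}-1$ for all $s \in [\gamma,\gamma^{-1}]$ when $\gamma \in (0,1)$; indices with $p_i = 0$ force $q_i = 0$ under the $0/0 := 1$ convention and so contribute nothing to any of these sums. Summing and using $\sum_i p_i = 1$ bounds the first factor by $\gamma^{-1}-1 \le 1/\gamma$, giving the claimed Lipschitz constant (indeed a marginally sharper one). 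I expect the only delicate points to be the book-keeping for the zero-probability transitions under the $0/0$ convention and the identification of the seminorm with the conditional variance; once the centring identity $\sum_i(q_i-p_i)=0$ is in hand, the Cauchy--Schwarz estimate is routine.
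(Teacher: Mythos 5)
Your proof is correct, but it follows a genuinely different route from the paper's. The paper's argument is shorter and blunter: writing $X_t = e_j$ on the atom $F$ and using only the upper bound $\psi_i \leq \gamma^{-1}a_{ij}$ from Definition~\ref{DefnGammaBalanced}(ii), it pulls the absolute value inside the sum to get
$\big|f(\omega,t,Y_t,Z_t) - f(\omega,t,Y_t,Z'_t)\big| \leq \gamma^{-1}\sum_i a_{ij}\big|(Z_t-Z'_t)^{\ast}(e_i - A_te_j)\big| = \gamma^{-1}\,\E\big[|(Z_t-Z'_t)^{\ast}M_{t+1}| \,\big|\, F\big]$,
and then (implicitly) upgrades this conditional $L^1$ bound to the $L^2$ seminorm via Jensen's inequality; property (iii) is never used. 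You instead exploit (iii) to centre the sum, $\sum_i(q_i-p_i)d_i = \sum_i(q_i-p_i)(d_i-\bar d)$, and then apply a weighted Cauchy--Schwarz inequality directly against the conditional variance, controlling the chi-square-type factor $\sum_{i:p_i>0}(q_i-p_i)^2/p_i$ by $(\gamma^{-1}-1)^2$ using both sides of the bound $q_i/p_i \in [\gamma,\gamma^{-1}]$ (your inequality $|s-1|\leq\gamma^{-1}-1$ on $[\gamma,\gamma^{-1}]$ does require the lower endpoint, via $1-\gamma \leq \gamma^{-1}-1$, but that is fine). Your handling of the $p_i=0$ indices via the $0/0:=1$ convention is also correct: condition (ii) forces $q_i=0$ there. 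What your approach buys is a marginally sharper Lipschitz constant, $\gamma^{-1}-1$ in place of $\gamma^{-1}$, which of course still establishes the lemma as stated; what the paper's approach buys is brevity and the fact that it needs only half of hypothesis (ii).
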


\begin{proof}
Let $\psi = \psi(\omega,t,z,z')$ be the random field associated with $f$ in Definition~\ref{DefnGammaBalanced}. Let $t \geq 0$ and let $F$ be an atom of $\F_t$. Suppose that $X_t$ takes the value $e_j$ on $F$. Writing $A_t = [a_{ij}]$ and $\psi = [\psi_i]$, we note, from property (ii) of Definition~\ref{DefnGammaBalanced}, that $\psi_i \leq \gamma^{-1}a_{ij}$. Then, for any $\F_t$-measurable random variables $Y_t,Z_t,Z'_t$ evaluated on $F$, we have
\begin{align*}
\big| f(\omega&,t,Y_t,Z_t) - f(\omega,t,Y_t,Z'_t) \big| = \Bigg| \sum_{i=1}^N \psi_i (Z_t - Z'_t)^{\ast}(e_i - A_te_j) \Bigg| \\
&\leq \frac{1}{\gamma} \sum_{i=1}^N a_{ij} \, \big| (Z_t - Z'_t)^{\ast}(e_i - A_te_j) \big| = \frac{1}{\gamma} \, \E \Big[ \big| (Z_t - Z'_t)^{\ast}M_{t+1} \big| \, \Big| \, F \Big].
\end{align*}
This holds on every atom $F$ of $\F_t$, and the result follows.
\end{proof}

\begin{mylemma}\label{GammaBalPropMeasure}
If $f$ is a $\gamma$-balanced driver, then $f$ satisfies the conditions of Proposition~\ref{ChangeMeasure}.
\end{mylemma}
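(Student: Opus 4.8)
The plan is to unpack Definition~\ref{DefnGammaBalanced} on a fixed atom and to recognise the difference $f(\omega,t,y,z) - f(\omega,t,y,z')$ as a convex combination, with \emph{strictly positive} weights, of exactly the finite set that appears in the statement of Proposition~\ref{ChangeMeasure}. First I would fix $t \geq 0$, $y \in \Reals$, $z,z' \in \Reals^N$ and an atom $F \in \F_t$ on which $X_t$ takes the value $e_j$. Since $X$ is now a Markov chain, on $F$ we have $\E[X_{t+1}\,|\,\F_t] = A_tX_t = A_te_j$ and, writing $A_t = [a_{ij}]$, the index set becomes $\J_t^F = \{i : a_{ij} > 0\}$. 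Writing $\psi = \psi(\omega,t,z,z')$ and using property (iii), $\mathbf{1}^{\ast}\psi = 1$, I would rewrite property (i) as
\[
f(\omega,t,y,z) - f(\omega,t,y,z') = (z-z')^{\ast}(\psi - A_te_j) = \sum_{i=1}^N \psi_i\,(z-z')^{\ast}(e_i - A_te_j),
\]
where the second equality uses $A_te_j = \big(\sum_i \psi_i\big)A_te_j$.

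The key structural observation is that properties (ii) and (iii) together force $\psi$ to be a probability vector supported exactly on $\J_t^F$. Indeed, property (ii) gives $\psi_i \geq \gamma a_{ij}$, so $\psi_i > 0$ precisely when $a_{ij} > 0$, that is on $\J_t^F$; and the convention $0/0 := 1$ in (ii) forces $\psi_i = 0$ off $\J_t^F$, since otherwise the ratio $\psi_i/a_{ij}$ would be undefined and could not lie in $[\gamma,\gamma^{-1}]$. Combined with $\sum_i \psi_i = 1$, this exhibits the displayed difference as a convex combination, with strictly positive coefficients $\{\psi_i\}_{i\in\J_t^F}$, of the elements of $S_t^F = \{(z-z')^{\ast}(e_i - A_te_j) : i \in \J_t^F\}$.

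Finally I would split into two cases according to whether $S_t^F$ is a singleton. If $S_t^F$ contains at least two distinct values, a strictly-positive-weight convex combination strictly exceeds the minimum, which is exactly the strict inequality required in (\ref{eq:Comparison}). If $S_t^F = \{s\}$, then the difference equals $s$, and here I would invoke the martingale identity (\ref{eq:MartDiffExpZero}): since $0 = \sum_{i\in\J_t^F} a_{ij}(z-z')^{\ast}(e_i - A_te_j)$ writes $0$ as a convex combination of $S_t^F$, we must have $s = 0$; thus $\min(S_t^F) = 0$ and the difference equals $\min(S_t^F)$, which is precisely the equality permitted by the ``unless $\min = 0$'' clause. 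Either way the conditions of Proposition~\ref{ChangeMeasure} hold on $F$, and as $F$ was an arbitrary atom they hold a.s.

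The step I expect to require the most care is the support identification of $\psi$ — in particular using the $0/0 := 1$ convention to conclude $\psi_i = 0$ off $\J_t^F$ — together with matching the singleton case precisely to the ``unless $\min = 0$'' exception, since it is there that the exact phrasing of (\ref{eq:Comparison}) must be respected rather than just the generic strict inequality.
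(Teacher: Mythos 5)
Your proposal is correct and follows essentially the same route as the paper's proof: both identify the support of $\psi$ with $\J_t^F$ via property (ii) and the $0/0:=1$ convention, express the difference $f(\omega,t,y,z)-f(\omega,t,y,z')$ as a strictly-positive-weight convex combination of the elements of $S_t^F$, and then use the identity (\ref{eq:MartDiffExpZero}) to handle the degenerate case. Your case split (singleton versus non-singleton $S_t^F$) is only cosmetically different from the paper's split on $\min(S_t^F)<0$ versus $\min(S_t^F)=0$, since the zero-mean identity makes the two dichotomies coincide.
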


\begin{proof}
Let $\psi = \psi(\omega,t,z,z')$ be the random field associated with $f$ in Definition~\ref{DefnGammaBalanced}. Let $t \geq 0$, $y \in \Reals$, $z,z' \in \Reals^N$ and let $F$ be any atom of $\F_t$. Recall the notation $\J_t^F := \{i : \Prob(X_{t+1} = e_i \, | \, F) > 0\}$. Writing $\psi = [\psi_i]$, we see, from the properties of $\psi$ given in Definition~\ref{DefnGammaBalanced}, that $\psi_i > 0 \ \Longleftrightarrow \ e_i^{\ast}A_tX_t > 0 \ \Longleftrightarrow \ i \in \J_t^F$, and hence that $\sum_{i \in \J_t^F} \psi_i = 1$, and
\begin{equation}\label{eq:BalancedMarkovian}
f(\omega,t,y,z) - f(\omega,t,y,z') = \sum_{i \in \J_t^F} \psi_i(z-z')^{\ast}(e_i - \E[X_{t+1} \, | \, \F_t]).
\end{equation}
Write $S_t^F := \big\{(z-z')^{\ast}(e_i - \E[X_{t+1} \, | \, \F_t]) : i \in \J_t^F\big\}$, where the conditional expectation is evaluated on $F$. Note that
\begin{equation}\label{eq:CondExpZero2}
\E\big[(z - z')^{\ast}(X_{t+1} - \E[X_{t+1} \, | \, \F_t]) \, \big| \, F\big] = 0,
\end{equation}
and hence that $\min \big(S_t^F\big) \leq 0$ and $\max \big(S_t^F\big) \geq 0$. If $\min \big(S_t^F\big) < 0$, then (\ref{eq:BalancedMarkovian}) implies that
$$f(\omega,t,y,z) - f(\omega,t,y,z') > \min \big(S_t^F\big).$$
If $\min \big(S_t^F\big) = 0$, then (\ref{eq:CondExpZero2}) implies that $S_t^F = \{0\}$, and it clearly follows from (\ref{eq:BalancedMarkovian}) that
$$f(\omega,t,y,z) - f(\omega,t,y,z') = 0.$$
This establishes the conditions of Proposition~\ref{ChangeMeasure}.
\end{proof}

The following lemma is a discrete time version of \cite[Lemma 2]{Cohen2014}.

\begin{mylemma}\label{infGammaBalLemma}
Fix $\gamma \in (0,1)$. Let $\{f^u\}_{u \in \mathcal{U}}$ be a family of $\gamma$-balanced drivers that are independent of $y$, and for each $u \in \mathcal{U}$ let $\psi^u(\omega,t,z,z')$ be the random field associated with $f^u$. Let
$$g(\omega,t,z) := \inf_{u \in \mathcal{U}} \, \{f^u(\omega,t,z)\}.$$
Suppose $|g(\omega,t,z)| < \infty$ for all $t,z$ and almost all $\omega$. Then $g$ is also $\gamma$-balanced, and similarly for $\sup_{u \in \mathcal{U}} \{f^u\}$.
\end{mylemma}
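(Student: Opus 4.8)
The plan is to construct, for the infimum $g := \inf_{u} f^u$, an associated random field $\psi^g$ satisfying the three properties of Definition~\ref{DefnGammaBalanced}, working separately on each atom $F$ of $\F_t$. So I would fix $t$ and an atom $F$ on which $X_t$ takes the value $e_j$, and write $p := A_te_j = A_tX_t$ and $w := z - z'$. The key observation is that properties (ii) and (iii) together say precisely that each $\psi^u(\omega,t,z,z')$ lies in the set
$$\mathcal{P}_\gamma(p) := \big\{ q \in \Reals^N : \mathbf{1}^{\ast}q = 1, \ \gamma p_i \leq q_i \leq \gamma^{-1}p_i \ \text{for all} \ i \big\},$$
where the box constraint automatically forces $q_i = 0$ when $p_i = 0$, and that property (i) reads $f^u(\omega,t,z) - f^u(\omega,t,z') = w^{\ast}(\psi^u - p)$. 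Since $\mathcal{P}_\gamma(p)$ is a nonempty (it contains $p$), convex, compact polytope and $q \mapsto w^{\ast}q$ is linear, this functional attains a minimum $m$ and maximum $M$ over $\mathcal{P}_\gamma(p)$, and consequently every difference $f^u(\omega,t,z) - f^u(\omega,t,z')$ lies in the fixed interval $[m - w^{\ast}p, \, M - w^{\ast}p]$, uniformly in $u$.

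The heart of the argument is to show that the increment of the infimum lies in this same interval, namely
$$g(\omega,t,z) - g(\omega,t,z') \in [m - w^{\ast}p, \ M - w^{\ast}p].$$
For the upper bound I would, given $\varepsilon > 0$, pick $u_0$ with $f^{u_0}(\omega,t,z') \leq g(\omega,t,z') + \varepsilon$ and estimate $g(\omega,t,z) \leq f^{u_0}(\omega,t,z) = f^{u_0}(\omega,t,z') + w^{\ast}(\psi^{u_0} - p) \leq g(\omega,t,z') + \varepsilon + (M - w^{\ast}p)$; the lower bound follows symmetrically by instead choosing $u_0$ nearly optimal at $z$. Letting $\varepsilon \downarrow 0$ gives the claim. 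This is the step I expect to be the main obstacle: it is where the finiteness hypothesis $|g| < \infty$ is needed, and where the fact that the single constant $\gamma$ controls the whole family — hence giving a common interval $[m,M]$ independent of $u$ — does the essential work. A naive termwise manipulation of the infimum would not suffice without this uniformity.

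Having placed $g(\omega,t,z) - g(\omega,t,z')$ in the interval, I set $c := g(\omega,t,z) - g(\omega,t,z') + w^{\ast}p \in [m,M]$, take $q^-, q^+ \in \mathcal{P}_\gamma(p)$ attaining $m$ and $M$, and define $\psi^g := \theta q^- + (1-\theta)q^+$ with $\theta := (M-c)/(M-m)$ (and $\psi^g := q^-$ when $M = m$). Convexity of $\mathcal{P}_\gamma(p)$ gives properties (ii) and (iii), while $w^{\ast}\psi^g = \theta m + (1-\theta)M = c$ yields property (i). To see that $g$ is a genuine (adapted) driver and that $\psi^g$ is adapted, I would note that every quantity is computed atom by atom on the finite $\sigma$-algebra $\F_t$, and that $m$, $M$, $q^-$, $q^+$ are the value and optimizers of a linear program whose data $(w,p)$ vary continuously in $(z,z')$, so a measurable selection is available. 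Finally, the statement for $\sup_{u}\{f^u\}$ follows by the symmetric argument: one again shows the increment of the supremum lies in $[m - w^{\ast}p, \, M - w^{\ast}p]$ and selects $\psi$ by the same convex-combination device. I would emphasise that the supremum case cannot simply be reduced to the infimum via $-f^u$, since $\mathcal{P}_\gamma(p)$ is not symmetric about $p$ unless $\gamma = 1$, so $-f^u$ need not be $\gamma$-balanced.
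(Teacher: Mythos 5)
Your proof is correct and follows the same architecture as the paper's: sandwich the increment $g(\omega,t,z)-g(\omega,t,z')$ between the extreme values of the linear functional $q \mapsto w^{\ast}q$ over a compact convex set of admissible stochastic vectors, and then realise the increment exactly by a convex combination of the two extremal vectors. The one place you diverge is in how those extremal vectors are produced: the paper extracts a convergent subsequence from $\{\psi^{u_n}(z,z')\}$ inside the simplex to obtain limits $\ubar{\psi},\bar{\psi}$ attaining $\inf_u w^{\ast}\psi^u$ and $\sup_u w^{\ast}\psi^u$, and must then invoke a measurable-selection result for the dependence of these limits on $(z,z')$; you instead optimise $w^{\ast}q$ over the fixed polytope $\mathcal{P}_\gamma(p)$, which contains every $\psi^u$, trading the compactness/subsequence extraction for a linear programme with continuously varying data $(w,p)$. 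Both devices yield a valid interval containing the increment of $g$ (yours is possibly larger, which is harmless), and both still owe a measurable selection of the optimisers --- your appeal to ``a measurable selection is available'' is the analogue of the paper's citation and should be backed by the same kind of measurable maximum theorem, since LP optimisers need not be unique. Your explicit $\epsilon$-argument for the sandwich is a welcome expansion of a step the paper asserts without detail, and it correctly isolates where the hypothesis $|g|<\infty$ and the uniformity of $\gamma$ across the family enter. Your closing observation that the supremum case cannot be reduced to the infimum by replacing $f^u$ with $-f^u$ is also right, and is precisely why the paper, too, handles it by a symmetric argument.
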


\begin{proof}
By property (i) of Definition~\ref{DefnGammaBalanced}, the functions $\{f^u\}_{u \in \mathcal{U}}$ are continuous in $z$, uniformly in $u$. It follows that $g$ is a continuous, and hence measurable function of $z$.

Fix $t\ge 0$. As $\F_t$ can be made up from finitely many atoms, there exists a single null set $N_t \in \F_t$ such that $N' \subseteq N_t$ for every null set $N'\in \F_t$. For simplicity, define $g(\omega, t, z) = 0$ for $\omega \in N_t$. 

For every $\omega \not\in N_t$ and every $z,z' \in \mathbb{R}^N$, we can find a sequence $\{u_n\}_{n\geq 1}$ in $\mathcal{U}$ (dependent on $z,z'$) such that, omitting the arguments $\omega,t$ for clarity,
\begin{equation}\label{Convergetoinf}
\lim_{n \rightarrow \infty} (z-z')^{\ast}\psi^{u_n}(z,z') = \inf_{u \in \mathcal{U}} \big\{(z-z')^{\ast}\psi^u(z,z')\big\}.
\end{equation}
For each $n$, $\psi^{u_n}(z,z')$ is a stochastic vector, i.e. it belongs to the compact subset $\{\varphi \in \Reals^N : \varphi_i \geq 0, \, \sum_{i=1}^N \varphi_i = 1\}$. Hence, there exists a subsequence $\{u_{n_k}\}_{k \geq 1}$ such that
$$\lim_{k \rightarrow \infty} \psi^{u_{n_k}}(z,z') = \ubar{\psi}(z,z'),$$
where $\ubar{\psi}(z,z')$ is a stochastic vector which satisfies conditions (ii) and (iii) of Definition~\ref{DefnGammaBalanced}. In addition, $\ubar{\psi}$ can be constructed in such a way that it is measurable in $z, z'$ (see \cite[Theorem A.10.5]{CohenElliott2015}). Further, by (\ref{Convergetoinf}), we have
$$(z-z')^{\ast}\ubar{\psi}(z,z') = \inf_{u \in \mathcal{U}} \big\{(z-z')^{\ast}\psi^u(z,z')\big\}.$$
By a similar argument, there exists a stochastic vector $\bar{\psi}(z,z')$, which also satisfies conditions (ii) and (iii) of Definition~\ref{DefnGammaBalanced}, is measurable in $z, z'$, and is such that
$$(z-z')^{\ast}\bar{\psi}(z,z') = \sup_{u \in \mathcal{U}} \big\{(z-z')^{\ast}\psi^u(z,z')\big\}.$$
We have, for each $u \in \mathcal{U}$,
$$f^u(z) - f^u(z') = (z-z')^{\ast}\big(\psi^u(z,z') - A_tX_t\big),$$
from which we deduce that
\begin{align*}
(z-z')^{\ast}\ubar{\psi}(z,z') &\leq g(z) - g(z') + (z-z')^{\ast}A_tX_t \\
&\leq (z-z')^{\ast}\bar{\psi}(z,z').
\end{align*}
With $0/0 := 1/2$, it follows that
$$\mu := \frac{(z-z')^{\ast}\bar{\psi}(z,z') - \big(g(z) - g(z')\big) - (z-z')^{\ast}A_tX_t}{(z-z')^{\ast}\big(\bar{\psi}(z,z') - \ubar{\psi}(z,z')\big)} \in [0,1].$$
Define $\hat{\psi}(z,z') := \mu\ubar{\psi}(z,z') + (1-\mu)\bar{\psi}(z,z')$. Then
$$g(z) - g(z') = (z-z')^{\ast}\big(\hat{\psi}(z,z') - A_tX_t\big),$$
and we see that the random field $\hat{\psi}(\omega,t,z,z')$ satisfies all the conditions of Definition~\ref{DefnGammaBalanced} for the driver $g$, and hence that $g$ is $\gamma$-balanced.

The proof for $\sup_{u \in \mathcal{U}} \{f^u\}$ is similar.
\end{proof}

\subsection{Markovian solutions}

We showed in Lemma~\ref{FiniteHorizonMarkovSoln} that, if $X$, $f$ and the terminal condition $\xi$ are all Markovian, i.e. at time $t$ they only depend on $\omega$ through the value of $X_t$, then so is the solution $(Y,Z)$ of the corresponding finite horizon BSDE. We shall now extend this result to our infinite horizon equations. As previously, we assume throughout that $X$ is a Markov chain with transition matrix $A_t$.

\begin{mylemma}\label{MarkovianSolnsDisc}
Let $\alpha > 0$, let $f$ be a Markovian driver satisfying the conditions of Theorem~\ref{SolnDiscountedBSDE}, and let $(Y,Z)$ be the unique bounded solution of the associated discounted BSDE. Then there exists a deterministic function $v : \mathbb{N} \times \States \rightarrow \Reals$ such that, for all $t$, $Y_t = v(t,X_t)$ and $e_k^{\ast}Z_t = v(t+1,e_k)$ for all $e_k \in \States$.

Moreover, writing $\mathbf{v}_{t+1}$ for the vector in $\Reals^N$ with entries $e_k^{\ast}\mathbf{v}_{t+1} = v(t+1,e_k)$, the function $v$ satisfies
\begin{equation}\label{eq:veqn2}
(1 + \alpha)v(t,e_k) - \tilde{f}(e_k,t,\mathbf{v}_{t+1}) - \mathbf{v}_{t+1}^{\ast}A_te_k = 0
\end{equation}
for all $t$ and all $e_k \in \States$ such that $\Prob(X_t = e_k) > 0$.
\end{mylemma}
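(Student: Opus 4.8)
The plan is to obtain the Markovian representation from the finite-horizon approximations $(Y^T,Z^T)$ furnished by Theorem~\ref{SolnDiscountedBSDE}, to which Lemma~\ref{FiniteHorizonMarkovSoln} already applies, and then to pass to the limit as $T \to \infty$.

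First I would note that the finite-horizon equation (\ref{eq:DiscountedUpToTimeT}) is a BSDE of the form (\ref{eq:BSDE}), with terminal time $T$, zero terminal condition $\xi = \varphi(X_T)$ for $\varphi \equiv 0$, and driver $\hat{f}(\omega,u,y,z) := f(\omega,u,z) - \alpha y$. Since $f$ is Markovian and independent of $y$, this driver is Markovian, $\hat{f}(\omega,u,y,z) = \tilde{f}(X_u,u,z) - \alpha y$, and (as observed in the proof of Theorem~\ref{SolnDiscountedBSDE}) it satisfies the hypotheses of Theorem~\ref{FiniteHorizonExistUniq}. Lemma~\ref{FiniteHorizonMarkovSoln} then supplies, for each $T$, a function $v^T : \{0,\ldots,T\} \times \States \to \Reals$ with $Y_t^T = v^T(t,X_t)$ and $e_k^{\ast}Z_t^T = v^T(t+1,e_k)$; specialising (\ref{eq:veqn1}) to $\hat{f}$ yields, for $t < T$ and each $e_k$ reachable at time $t$,
$$(1+\alpha)v^T(t,e_k) - \tilde{f}(e_k,t,\mathbf{v}_{t+1}^T) - (\mathbf{v}_{t+1}^T)^{\ast}A_te_k = 0,$$
which is the finite-horizon analogue of (\ref{eq:veqn2}).

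Next I would pass to the limit. For $e_k$ with $\Prob(X_t=e_k)>0$, the constant $v^T(t,e_k)$ equals $Y_t^T$ on the atom $\{X_t=e_k\}$, so $|v^T(t,e_k) - v^{T'}(t,e_k)| \leq \|Y_t^T - Y_t^{T'}\|_{\infty}$; since Theorem~\ref{SolnDiscountedBSDE} establishes that $\{Y_t^T\}_T$ is Cauchy in $L^{\infty}$, the real sequence $\{v^T(t,e_k)\}_T$ converges, and I define $v(t,e_k)$ to be its limit (assigning $v$ any fixed value on unreachable states). Then $Y_t^T = v^T(t,X_t) \to v(t,X_t)$ in $L^{\infty}$, giving $Y_t = v(t,X_t)$. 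For the control component I would use that $(Z_t^T)^{\ast}M_{t+1} \to Z_t^{\ast}M_{t+1}$ in $L^{\infty}$, together with the identity $(Z_t^T)^{\ast}M_{t+1} = v^T(t+1,X_{t+1}) - (\mathbf{v}_{t+1}^T)^{\ast}A_tX_t$; letting $T \to \infty$ identifies this limit as $v(t+1,X_{t+1}) - \mathbf{v}_{t+1}^{\ast}A_tX_t$, which is exactly $Z_t^{\ast}M_{t+1}$ for the representative with $e_k^{\ast}Z_t = v(t+1,e_k)$.

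Finally, since $f$ is Lipschitz in $z$ by the hypotheses of Theorem~\ref{SolnDiscountedBSDE} (Definition~\ref{DefnLipschitz}), so that $\tilde{f}(e_k,t,\cdot)$ is continuous with respect to $\|\cdot\|_{M_{t+1}}$, and $\mathbf{v}_{t+1}^T \to \mathbf{v}_{t+1}$ on the states reachable at time $t+1$, I would let $T \to \infty$ in the displayed finite-horizon equation to recover (\ref{eq:veqn2}). I expect the only real care to be needed in the limiting argument for $Z$: one must check that only states reachable at time $t+1$ enter both $(Z_t^T)^{\ast}M_{t+1}$ and $\tilde{f}(e_k,t,\mathbf{v}_{t+1}^T)$, so that the possible failure of $v^T(t+1,\cdot)$ to converge on unreachable states (where no $L^{\infty}$ control is available) is immaterial, and that the representative $e_k^{\ast}Z_t = v(t+1,e_k)$ genuinely realises, up to $\sim_{M_{t+1}}$, the limit $U_{t+1}$ constructed in Theorem~\ref{SolnDiscountedBSDE}.
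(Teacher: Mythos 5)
Your proposal is correct and follows essentially the same route as the paper: apply Lemma~\ref{FiniteHorizonMarkovSoln} (with $\varphi \equiv 0$) to the finite-horizon approximations from Theorem~\ref{SolnDiscountedBSDE}, use the $L^{\infty}$ Cauchy property to define $v$ as the limit of the $v^T$, and read off (\ref{eq:veqn2}) from the one-step dynamics. The only cosmetic difference is that the paper identifies $e_k^{\ast}Z_t = v(t+1,e_k)$ directly by re-running the one-step/martingale-representation argument on the limit equation using the Markovianity of $Y_{t+1}$, whereas you pass to the limit in $(Z_t^T)^{\ast}M_{t+1}$; both are valid.
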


\begin{proof}
Let $(Y^T,Z^T)$ denote the (unique adapted) solution of the finite horizon BSDE (\ref{eq:DiscountedUpToTimeT}). Note that we are in the situation of Lemma~\ref{FiniteHorizonMarkovSoln}, with $\varphi \equiv 0$. Hence, for each $T \geq 0$, there exists a function $v^T : \{0,1,\ldots,T\} \times \States \rightarrow \Reals$ such that $Y^T_t = v^T(t,X_t)$ and $e_k^{\ast}Z^T_t = v^T(t+1,e_k)$ for all $e_k \in \States$.

Recall, from Theorem~\ref{SolnDiscountedBSDE} that $Y^T_t \rightarrow Y_t$ as $T \rightarrow \infty$ with respect to the $L^{\infty}$ norm, uniformly on finite sets in $t$, from which we deduce the existence of a function $v : \mathbb{N} \times \States \rightarrow \Reals$ such that $Y_t = v(t,X_t)$ for all $t$.

Note that the one-step dynamics of the infinite horizon BSDE (\ref{eq:DiscountedBSDE2}) are identical to the one-step dynamics of the corresponding finite horizon BSDE (\ref{eq:DiscountedUpToTimeT}). Therefore, given the value of $Y_{t+1}$, we can construct the value of $Z_t$ exactly as in the finite horizon case. In fact, since $Y_{t+1}$ is Markovian, we can apply the same argument as in the proof of Lemma~\ref{FiniteHorizonMarkovSoln} to deduce that $e_k^{\ast}Z_t = v(t+1,e_k)$ for all $e_k \in \States$. The equation (\ref{eq:veqn2}) follows from the one-step dynamics of (\ref{eq:DiscountedBSDE2}).
\end{proof}

\begin{mycor}\label{vIndepoft}
Recall the conditions of Lemma~\ref{MarkovianSolnsDisc}. Suppose that $X$ is time-homogeneous, so that $A = A_t$ does not depend on $t$, and that the driver $f = \tilde{f}(X_t,z)$ is also independent of $t$. Then the same is true of the function $v$ given in Lemma~\ref{MarkovianSolnsDisc}.
\end{mycor}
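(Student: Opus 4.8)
The plan is to exploit the fact that, once $A_t\equiv A$ and $\tilde f$ are independent of $t$, the backward recursion that produces the finite horizon Markovian values collapses to a single time-independent map. The finite horizon value will then depend on $(t,T)$ only through the remaining horizon $T-t$, and passing to the limit $T\to\infty$ will eliminate the $t$-dependence.

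Concretely, I would take the finite horizon solutions $(Y^T,Z^T)$ of (\ref{eq:DiscountedUpToTimeT}) and their Markovian representatives $v^T$, exactly as in the proof of Lemma~\ref{MarkovianSolnsDisc}. Since $f$ is independent of $y$, the finite horizon specialisation of (\ref{eq:veqn2}) reads
$$(1+\alpha)\,v^T(t,e_k)=\tilde f(e_k,\mathbf{v}^T_{t+1})+(\mathbf{v}^T_{t+1})^{\ast}Ae_k,\qquad v^T(T,\cdot)=0,$$
for every $e_k$ with $\Prob(X_t=e_k)>0$. As the right-hand side is an explicit function of the vector $\mathbf{v}^T_{t+1}$, I would use it to define a single map $\Phi:\Reals^N\to\Reals^N$ by $(1+\alpha)(\Phi\mathbf{v})_k:=\tilde f(e_k,\mathbf{v})+\mathbf{v}^{\ast}Ae_k$, which is manifestly independent of $t$ precisely because $A$ and $\tilde f$ are. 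Taking the $\Phi$-iterates as the canonical extension of $v^T$ to all states, a backward induction starting from $0$ gives $\mathbf{v}^T_t=\Phi^{\,T-t}(\mathbf{0})$, so $\mathbf{v}^T_t$ depends on $T$ and $t$ only through $T-t$.

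The analytic heart of the argument is that $\Phi$ is a contraction in the supremum norm. By the representation (\ref{eq:ReassignProbMass}) established in the proof of Proposition~\ref{ChangeMeasure} (used here with $f$ independent of $y$ and $\E_{\Prob}[X_{t+1}\,|\,\F_t]=Ae_k$ on $\{X_t=e_k\}$), for any $\mathbf{v},\mathbf{v}'$ and any $e_k$ there are weights $\mu_i\ge 0$ with $\sum_i\mu_i=1$ such that $\tilde f(e_k,\mathbf{v})-\tilde f(e_k,\mathbf{v}')+(\mathbf{v}-\mathbf{v}')^{\ast}Ae_k=\sum_i\mu_i(v_i-v'_i)$. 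Hence
$$\big|(\Phi\mathbf{v})_k-(\Phi\mathbf{v}')_k\big|\le \frac{1}{1+\alpha}\,\|\mathbf{v}-\mathbf{v}'\|_{\infty},$$
and since $\alpha>0$ the factor $(1+\alpha)^{-1}$ is strictly below one. Therefore $\Phi$ has a unique fixed point $\mathbf{v}^{\ast}$ and $\Phi^{\,n}(\mathbf{0})\to\mathbf{v}^{\ast}$, so $\mathbf{v}^T_t\to\mathbf{v}^{\ast}$ as $T\to\infty$ for every $t$. As Lemma~\ref{MarkovianSolnsDisc} (via Theorem~\ref{SolnDiscountedBSDE}) already provides $\mathbf{v}^T_t\to\mathbf{v}_t$ on the reachable states, I conclude $\mathbf{v}_t=\mathbf{v}^{\ast}$ for all $t$, i.e. $v(t,e_k)=\mathbf{v}^{\ast}_k$ is independent of $t$. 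Equivalently, one can verify directly that the stationary pair $Y_t=(\mathbf{v}^{\ast})^{\ast}X_t$, $e_k^{\ast}Z_t=\mathbf{v}^{\ast}_k$ solves (\ref{eq:DiscountedBSDE2}) and then invoke the uniqueness part of Theorem~\ref{SolnDiscountedBSDE}.

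I expect the main obstacle to be the contraction estimate, namely extracting from the comparison-type hypothesis of Proposition~\ref{ChangeMeasure} the fact that $\tilde f(e_k,\mathbf{v})-\tilde f(e_k,\mathbf{v}')+(\mathbf{v}-\mathbf{v}')^{\ast}Ae_k$ is a genuine convex combination of the coordinates of $\mathbf{v}-\mathbf{v}'$; once this is in hand, everything else is routine. A secondary, purely technical point is that $v$ is only pinned down by the BSDE on states of positive probability, so the statement should be read as equality of values on the reachable states, with the $\Phi$-iterates supplying the natural extension elsewhere.
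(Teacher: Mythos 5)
Your argument is correct, but it takes a genuinely different route from the paper. The paper's proof is a two-line observation: in the time-homogeneous case the summands of the discounted BSDE, given $X_u$, do not depend on $u$, so the equation is invariant under time shifts, and the uniqueness of bounded solutions from Theorem~\ref{SolnDiscountedBSDE} forces the solution itself to be time-shift invariant. You instead extract the one-step recursion $(1+\alpha)\mathbf{v}^T_t=\Phi(\mathbf{v}^T_{t+1})\cdot(1+\alpha)$ from (\ref{eq:veqn2}), observe that $\mathbf{v}^T_t=\Phi^{T-t}(\mathbf{0})$ depends only on the horizon $T-t$, and prove that $\Phi$ is a $(1+\alpha)^{-1}$-contraction in the supremum norm by using the convex-combination representation (\ref{eq:ReassignProbMass}) to write $\tilde f(e_k,\mathbf{v})-\tilde f(e_k,\mathbf{v}')+(\mathbf{v}-\mathbf{v}')^{\ast}Ae_k=\sum_i\mu_i(v_i-v_i')$; this step is sound, since the hypothesis of Proposition~\ref{ChangeMeasure} applies to arbitrary deterministic $z,z'$ and, for a Markovian driver with constant $A$, the weights depend on the atom only through $X_t$. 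Your approach is longer but buys more: it gives a constructive characterisation of $\mathbf{v}$ as the unique fixed point of an explicit map (essentially the stationary form of (\ref{eq:veqn2})), an independent existence proof for the stationary Markovian solution via Banach's fixed point theorem, and a geometric rate of convergence of $\mathbf{v}^T_t$ to $\mathbf{v}$; the paper's argument is shorter because it leans entirely on the already-established uniqueness. The caveat you raise about states of zero probability is real but harmless in both approaches, since $v$ is only determined on reachable states and your $\Phi$-iterates (or any convention) supply the extension; note also that your closing alternative --- verifying that the stationary pair solves (\ref{eq:DiscountedBSDE2}) and invoking uniqueness --- is essentially the paper's proof.
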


\begin{proof}
The discounted BSDE (\ref{eq:DiscountedBSDE2}) is given in this case by
$$Y_T = Y_t - \sum_{t \leq u < T} \big( \tilde{f}(X_u,Z_u) - \alpha Y_u \big) + \sum_{t \leq u < T} Z_u^{\ast}(X_{u+1} - AX_u).$$
Note that, given $X_u$, the summands are independent of $u$. By the uniqueness of solutions established in Theorem~\ref{SolnDiscountedBSDE}, the same must therefore be true of the solution $(Y_u,Z_u)$, and the result follows.
\end{proof}

\section{Uniformly Ergodic Markov Chains}

The main aim of this section is to obtain ergodicity estimates for a discrete time Markov chain which hold uniformly for a suitable class of perturbations of its transition matrix. We will loosely follow the argument given in Section 3 of \cite{CohenHu2013}, by adapting the main steps into the discrete time setting. Henceforth, we will assume that all the Markov chains we consider are time-homogeneous.

\subsection{Uniform ergodicity}

The following lemma will turn out to be a useful tool for improving bounds on expectations, and improves slightly the estimate given in \cite[Lemma 3.3]{CohenHu2013}.

\begin{mylemma}\label{ImproveBound}
Let $T$ be a random variable, and consider $G(\beta) = \sup_{\nu} \E_{\nu} \big[ e^{\beta T} \big]$, where $\nu$ is a parametrisation of probability measures under which the expectation is taken. Suppose there exist finite constants $\beta^{\ast}, K > 0$ such that $G(\beta^{\ast}) \leq K$. Then, for any $\epsilon > 0$,
$$G(\beta) \leq 1 + \epsilon \ \ \ \ \text{for all} \ \ \ \ \beta \in \left[ 0, \beta^{\ast} \Big( \frac{\epsilon}{K} \wedge 1 \Big) \right].$$
\end{mylemma}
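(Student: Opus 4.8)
The plan is to reduce everything to the single hypothesis $G(\beta^*) \le K$ by an interpolation argument in the exponent. The key observation is that for $\beta \in [0,\beta^*]$ we may write $\beta = r\beta^*$ with $r := \beta/\beta^* \in [0,1]$, and then for every admissible measure $\nu$, using $e^{\beta T} = (e^{\beta^* T})^{r}$ together with Jensen's inequality (the map $x \mapsto x^{r}$ is concave and increasing on $(0,\infty)$ for $r \in [0,1]$),
$$\E_{\nu}\big[e^{\beta T}\big] = \E_{\nu}\big[(e^{\beta^* T})^{r}\big] \le \big(\E_{\nu}[e^{\beta^* T}]\big)^{r} \le G(\beta^*)^{r} \le K^{r}.$$
The final bound is independent of $\nu$, so taking the supremum over $\nu$ afterwards gives $G(\beta) \le K^{r}$ for all $\beta = r\beta^* \in [0,\beta^*]$. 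Note that this step needs no sign assumption on $T$, since $e^{\beta^* T} > 0$ regardless.

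Next I would linearise $K^{r}$. We may assume $K \ge 1$, since otherwise $K^{r} \le 1 \le 1 + \epsilon$ for every $r \in [0,1]$ and there is nothing to prove. As $r \mapsto K^{r}$ is convex with $K^{0}=1$ and $K^{1}=K$, convexity on $[0,1]$ yields $K^{r} \le 1 + r(K-1)$. Using the crude estimate $K - 1 \le K$, this gives $K^{r} \le 1 + rK$, so whenever $r \le \epsilon/K$ we obtain $G(\beta) \le K^{r} \le 1 + \epsilon$. Imposing also $r \le 1$ (required for the interpolation to remain between the endpoints $0$ and $\beta^*$) leaves the admissible range $r \in [0,(\epsilon/K) \wedge 1]$, which in terms of $\beta = r\beta^*$ is exactly $\beta \in [0,\beta^*((\epsilon/K)\wedge 1)]$, as claimed.

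There is no serious obstacle here; the lemma is elementary once the exponent-interpolation is set up. The only points requiring care are that the bound $K^{r}$ must be uniform over the parametrising family $\nu$ (which it is, since $G(\beta^*) \le K$ controls all of them simultaneously, so the supremum may be taken only at the end), and the cap $\wedge 1$ in the statement, which simply records that the argument interpolates between $\beta = 0$ and $\beta = \beta^*$ and so is valid only for $r \le 1$. The passage from the sharper bound $1 + r(K-1)$ to $1 + rK$ is what produces the clean constant $\epsilon/K$ rather than the marginally better $\epsilon/(K-1)$; I would retain the weaker form for simplicity.
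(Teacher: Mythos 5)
Your proof is correct, but it takes a genuinely different route from the paper's. The paper argues by a direct pointwise linearisation of the exponential: for $0 \le c \le (\epsilon/K)\wedge 1$ one has $e^{c\beta^{\ast}T} \le 1 + ce^{\beta^{\ast}T}$ on $\{T \ge 0\}$ and $e^{c\beta^{\ast}T} \le 1$ on $\{T<0\}$, whence $\E_{\nu}\big[e^{c\beta^{\ast}T}\big] \le 1 + c\,\E_{\nu}\big[e^{\beta^{\ast}T}\big] \le 1 + cK \le 1+\epsilon$ uniformly in $\nu$. You instead interpolate at the level of the moment generating function, applying Jensen's inequality with the concave map $x \mapsto x^{r}$ to get $\E_{\nu}\big[e^{\beta T}\big] \le G(\beta^{\ast})^{r} \le K^{r}$ for $r = \beta/\beta^{\ast} \in [0,1]$, and then linearising $K^{r} \le 1 + r(K-1) \le 1+rK$ by convexity. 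Both arguments are elementary and land on exactly the same interval $\big[0,\beta^{\ast}\big(\tfrac{\epsilon}{K}\wedge 1\big)\big]$. Your intermediate bound $G(\beta) \le K^{\beta/\beta^{\ast}}$ (essentially the log-convexity of $\beta \mapsto \E_{\nu}[e^{\beta T}]$) is slightly sharper than the paper's $1+cK$ before you deliberately discard the improvement, and, as you note, it needs no case split on the sign of $T$; the paper's version, in exchange, avoids Jensen altogether and rests on a one-line pointwise inequality. Your treatment of the degenerate case $K<1$ and of the uniformity in $\nu$ (taking the supremum only after obtaining a $\nu$-free bound) are both handled correctly.
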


\begin{proof}
Let $0 \leq c \leq \left( \frac{\epsilon}{K} \wedge 1 \right)$, so that $e^{c \beta^{\ast} T} \leq 1 + ce^{\beta^{\ast} T}$ for any $T \geq 0$. Then, for any given measure $\nu$ in the class considered, we have
$$\E_{\nu} \big[ e^{c \beta^{\ast} T} \big] \leq \E_{\nu} \big[ \Ind_{\{ T<0 \}} + \Ind_{\{ T \geq 0 \}} \big( 1 + ce^{\beta^{\ast} T} \big) \big] \leq 1 + c\E_{\nu} \big[ e^{\beta^{\ast} T} \big] \leq 1 + \epsilon.$$
Taking the supremum over all measures $\nu$, we deduce the result.
\end{proof}

\begin{mydef}\label{DefnMeasuresTVnorm}
Let $\tilde{\mathcal{M}}(\States)$ denote the vector space (over $\mathbb{R}$) of finite signed measures on $\left( \States, \mathcal{P}(\States) \right)$. We will endow this space with the total variation norm, given by
$$\| \mu \|_{TV} := \frac{1}{2} \sum_{x \in \States} \big| \mu (\{x\}) \big|,$$
for $\mu \in \tilde{\mathcal{M}}(\States)$. By standard results, $\tilde{\mathcal{M}}(\States)$ is a Banach space. Let $\mathcal{M}$ denote the set of probability measures on $\left( \States, \mathcal{P}(\States) \right)$. Note that $\mathcal{M}$ is a closed subset of $\tilde{\mathcal{M}}(\States)$.
\end{mydef}

\begin{mydef}\label{DefnUniformErgodicity}
Let $X$ be a Markov chain on $\States$, and $P_t$ its transition operator, so that $P_t \mu$ is the law of $X_t$ given $X_0 \sim \mu$. We say that the chain $X$ is \textit{uniformly ergodic} if there exists a probability measure $\pi$ on $\States$, and constants $R, \rho > 0$ such that
$$\sup_{\mu \in \mathcal{M}} \| P_t \mu - \pi \|_{TV} \leq Re^{-\rho t} \ \ \ \ \text{for all} \ \ \ \ t \geq 0.$$
In this case $\pi$ is the unique invariant measure for $X$.
\end{mydef}

Under our assumptions of discrete time chains on a finite state space, we have the following simple classification of uniformly ergodic chains.

\begin{myprop}\label{UniErgEquivalence}
Let $X$ and $Y$ be two independent copies of a Markov chain on the finite state space $\States$, and let $T = \inf\{t \geq 1 : X_t = Y_t\}$ be the first meeting time of the two chains. Then the following statements are equivalent:
\begin{enumerate}[(i)]
\item $X$ is uniformly ergodic.
\item $X$ is irreducible up to transient states (i.e. $X$ has only one closed communicating class) and is aperiodic.
\item There exists $\beta > 0$ such that
$$G^{\ast}(\beta) := \sup_{x,y \in \States} \E_{xy} \big[ e^{\beta T} \big] < \infty,$$
where $\E_{xy}$ denotes expectation conditional on $X_0 = x$ and $Y_0 = y$.
\end{enumerate}
\end{myprop}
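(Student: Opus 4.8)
The plan is to prove the equivalences by establishing a cycle of implications, most naturally $(iii) \Rightarrow (i) \Rightarrow (ii) \Rightarrow (iii)$, exploiting the coupling of two independent copies of the chain through their meeting time $T$. The central mechanism throughout is the classical coupling inequality: if $X$ and $Y$ are run with the same law but independently, and we let them coalesce after the meeting time $T$, then for any initial laws we have $\|P_t\mu - P_t\mu'\|_{TV} \leq \Prob(T > t)$. This is what links the tail behaviour of $T$ to the total variation convergence in Definition~\ref{DefnUniformErgodicity}.

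First I would prove $(iii) \Rightarrow (i)$. Assuming $G^{\ast}(\beta) < \infty$ for some $\beta > 0$, Markov's inequality gives, uniformly over starting points $x,y$,
$$\Prob_{xy}(T > t) = \Prob_{xy}\big(e^{\beta T} > e^{\beta t}\big) \leq e^{-\beta t}\,\E_{xy}\big[e^{\beta T}\big] \leq G^{\ast}(\beta)\,e^{-\beta t}.$$
Taking $Y_0 \sim \pi$ for a candidate invariant measure $\pi$ (whose existence on a finite state space is guaranteed once one class is identified) and averaging over the coupling yields $\sup_{\mu}\|P_t\mu - \pi\|_{TV} \leq G^{\ast}(\beta)\,e^{-\beta t}$, which is exactly uniform ergodicity with $R = G^{\ast}(\beta)$ and $\rho = \beta$. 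For the direction $(i) \Rightarrow (ii)$, I would argue by contraposition: if the chain either had two distinct closed communicating classes or were periodic, one could exhibit two starting distributions whose total variation distance fails to decay to zero (two classes give disjoint supported invariant measures; periodicity gives oscillation), contradicting the uniform bound.

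The most substantial step is $(ii) \Rightarrow (iii)$: from irreducibility-up-to-transient-states and aperiodicity, I must produce a genuine \emph{exponential} moment bound on $T$ that is \emph{uniform} over all pairs of initial states. The key observation is that aperiodicity together with a single closed communicating class ensures that the product chain $(X,Y)$ on $\States \times \States$ itself has the diagonal $\{(x,x)\}$ reachable from every starting pair; more precisely, there exists a finite $n$ and $\delta > 0$ such that $\inf_{x,y}\Prob_{xy}(T \leq n) \geq \delta$ uniformly. Establishing this uniform hitting probability is the crux, and it rests on aperiodicity guaranteeing that high powers of the transition matrix have strictly positive entries on the recurrent class (so both chains can be steered into a common state within a bounded number of steps). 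Once this uniform lower bound is in hand, a standard geometric-trials argument over disjoint blocks of length $n$ gives $\sup_{x,y}\Prob_{xy}(T > kn) \leq (1-\delta)^k$, whence $T$ has a geometric-type tail and hence a finite exponential moment $G^{\ast}(\beta) < \infty$ for $\beta$ small enough that $e^{\beta n}(1-\delta) < 1$.

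I expect the uniform positivity of the meeting probability to be the main obstacle, chiefly because of the phrase \emph{irreducible up to transient states}: the two chains may start in transient states and one must verify they are both absorbed into the common closed class within a uniformly bounded time with uniformly positive probability before they can meet. Finiteness of $\States$ is what rescues this, since there are only finitely many starting pairs to consider, so a uniform $n$ and $\delta$ can be extracted by taking maxima and minima over the finite state space. I would also note, to close the loop cleanly, that Lemma~\ref{ImproveBound} can then be invoked to sharpen the constants in the resulting estimate, which is presumably the payoff the authors have in mind for stating that lemma immediately beforehand.
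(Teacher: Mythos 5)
Your proposal is correct and follows essentially the same cycle of implications as the paper, which declares (i)\,$\Rightarrow$\,(ii)\,$\Rightarrow$\,(iii) straightforward and defers (iii)\,$\Rightarrow$\,(i) to the coupling-plus-Markov's-inequality argument carried out in the proof of Theorem~\ref{BisUniformlyErgodic} --- precisely the argument you give. Your uniform block-minorization and geometric-trials treatment of (ii)\,$\Rightarrow$\,(iii) is a sound filling-in of the step the paper leaves to the reader.
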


\begin{proof}
The implications (\textit{i}) $\Rightarrow$ (\textit{ii}) $\Rightarrow$ (\textit{iii}) are straightforward. The implication (\textit{iii}) $\Rightarrow$ (\textit{i}) will be demonstrated in the course of the proof of Theorem~\ref{BisUniformlyErgodic}.
\end{proof}

Note that, by Lemma~\ref{ImproveBound}, we can make $G^{\ast}(\beta)$ arbitrarily close to $1$ by choosing $\beta$ sufficiently small.

\subsection{Split chains}

Recall from Definition~\ref{DefnGammaControlled}, that we say $B$ is $\gamma$-controlled by $A$, and write $B \succeq_{\gamma} A$, if $B \geq \gamma A$ componentwise, and we write $A \sim_{\gamma} B$ if both $B \succeq_{\gamma} A$ and $A \succeq_{\gamma} B$.

\begin{mythm}\label{BisUniformlyErgodic}
Let $A$ and $B$ be transition matrices such that $B$ is $\gamma$-controlled by $A$ for some $\gamma \in (0,1)$. If, under the measure induced by $A$, the chain $X$ is uniformly ergodic, then $X$ is also uniformly ergodic under the measure induced by $B$. Furthermore, the constants $R$ and $\rho$ in Definition~\ref{DefnUniformErgodicity} depend only on $A$ and $\gamma$, and $R$ can be made arbitrarily close to $1$ (with a corresponding decrease in $\rho$).
\end{mythm}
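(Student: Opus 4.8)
The plan is to use a coupling argument combined with the Nummelin splitting technique, exploiting the equivalence (iii) in Proposition~\ref{UniErgEquivalence} as the quantitative characterisation of uniform ergodicity. The key insight is that the meeting-time tail bound $G^{\ast}(\beta) < \infty$ is the right object to control, because it depends on the chain only through the coupling of two independent copies, and this coupling can be constructed in a way that is robust under $\gamma$-control of the transition matrix.

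**First I would** set up a coupling of two independent copies $X$ and $Y$ of the chain under the measure induced by $B$. Since $B \succeq_{\gamma} A$, at each step the transition kernel of $B$ dominates $\gamma$ times that of $A$; this is precisely the minorisation condition needed for Nummelin splitting. The plan is to decompose each transition of $X$ (and independently of $Y$) as: with the residual mass, follow $B - \gamma A$ (renormalised), and with the minorised part, draw from $A$. Crucially, when both chains simultaneously sample from the common $A$-component, they can be coupled to meet. I would make this precise by constructing, on an enlarged probability space, a joint evolution in which at each step there is a probability of at least (something like) $\gamma^2$ times the one-step meeting probability of the $A$-chain that the two copies coalesce, and thereafter remain together.

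**The main obstacle** — and the step I expect to require the most care — is converting the minorisation into a uniform geometric tail bound on the meeting time $T$ under $B$, expressed purely in terms of $A$ and $\gamma$. Under $A$ the chain is uniformly ergodic, so by Proposition~\ref{UniErgEquivalence}(iii) there is some $\beta_A > 0$ with $\sup_{x,y}\E^A_{xy}[e^{\beta_A T}] < \infty$; the difficulty is that the $B$-coupling does not directly inherit this. The idea is to observe the coupled $B$-chain only along the (random) subsequence of times at which both copies sample from the minorised $A$-component — these occur with probability at least $\gamma^2$ independently at each step — and on this subsequence the dynamics agree with the $A$-chain, so the embedded meeting time is stochastically dominated by a geometric number of independent $A$-meeting attempts. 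A geometric mixture of copies of a random variable with a finite exponential moment again has a finite exponential moment (for a possibly smaller exponent), which yields a constant $\beta_B > 0$, depending only on $A$ and $\gamma$, with $G^{\ast}_B(\beta_B) < \infty$. By the implication (iii)~$\Rightarrow$~(i), this establishes uniform ergodicity of $X$ under $B$.

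**Finally, to obtain** the quantitative conclusion that $R$ may be taken arbitrarily close to $1$, I would invoke Lemma~\ref{ImproveBound} with the parametrised family being the meeting-time distributions $\E_{xy}[\,\cdot\,]$ over all starting pairs $(x,y)$. Having a finite $G^{\ast}_B(\beta_B) \leq K$, the lemma lets me push $G^{\ast}_B(\beta)$ below $1 + \epsilon$ for all sufficiently small $\beta$; a standard coupling inequality of the form $\|P_t\mu - \pi\|_{TV} \leq \Prob_{\mu\otimes\pi}(T > t) \leq e^{-\beta t}\,\E[e^{\beta T}]$ then gives $R \leq G^{\ast}_B(\beta)$ and $\rho = \beta$, so shrinking $\beta$ trades a smaller exponent $\rho$ for an $R$ arbitrarily close to $1$. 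Throughout, every constant is tracked to depend only on $A$ and $\gamma$ and not on the particular perturbation $B$, which is the whole point of the uniformity claim.
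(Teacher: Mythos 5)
Your proposal follows essentially the same route as the paper: Nummelin splitting via the decomposition $B = \gamma A + (1-\gamma)C$, a geometric-trials bound on the meeting time of two independent copies (each ``attempt'' being an excursion in the minorised $A$-layer, with success probability bounded below uniformly in $B$), finiteness of an exponential moment of the coupling time, and then Lemma~\ref{ImproveBound} plus the coupling inequality and Markov's inequality to trade a smaller $\rho$ for $R$ arbitrarily close to $1$. The only place your sketch is looser than the paper is in asserting that the dynamics ``agree with the $A$-chain'' along the subsequence of minorised draws --- the paper makes this precise by requiring both split copies to remain in the $A$-layer for $n$ consecutive steps, which introduces the $\gamma^{2n}$ weighting in the per-attempt success probability $q_\gamma$ --- but this is a matter of detail, not of approach.
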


In our discrete time, finite state setting, the fact that $X$ is uniformly ergodic under the measure induced by $B$ is trivial. The interesting part of Theorem~\ref{BisUniformlyErgodic} is the claim there exist constants of ergodicity $R,\rho$ which hold for \emph{all} transition matrices $B$ such that $B \succeq_{\gamma} A$. We shall make use of this result in Section 4, in the proof of existence of bounded Markovian solutions to Ergodic BSDEs.

The proof of this theorem is the main purpose of this section. To do this we will use a Nummelin splitting argument, the broad theory of which is described in Meyn and Tweedie \cite[Chapter 5]{MeynTweedie2009}. We will assume the conditions of the theorem throughout the remainder of the section.

\begin{mydef}\label{DefnSplit}
Define the \textit{split space} of $\States$ to be $\check{\States} := \States \times \{0,1\}$. We will also denote the \textit{layers} of the splitting by $\States_0 := \States \times \{0\}$ and $\States_1 := \States \times \{1\}$, so that $\check{\States} = \States_0 \cup \States_1$. Given $y \in \States$, denote the corresponding elements in $\States_0$ and $\States_1$ by $\check{y}_0$ and $\check{y}_1$ respectively.

Let $\nu \in \tilde{\mathcal{M}}(\States)$. For each $y \in \States$, let
$$\check{\nu} (\{\check{y}_0\}) = (1 - \gamma) \nu (\{y\}), \ \ \ \ \ \ \check{\nu} (\{\check{y}_1\}) = \gamma \nu (\{y\}).$$
This defines a split measure $\check{\nu}$ on $\left( \check{\States}, \mathcal{P}(\check{\States}) \right)$. Similarly, we define the splitting of a column vector $\phi \in \mathbb{R}^N$ via the splitting map $\check{\Pi} : \mathbb{R}^N \rightarrow \mathbb{R}^{2N}$, defined by
$$\check{\Pi}(\phi) := (1 - \gamma) \left[ \begin{array}{c}
\phi \\
0
\end{array}
\right] + \gamma \left[ \begin{array}{c}
0 \\
\phi
\end{array}
\right] = \left[ \begin{array}{c}
\left( 1 - \gamma \right) \phi \\
\gamma \phi
\end{array}
\right] \in \mathbb{R}^{2N},$$
where here $0$ is the zero vector in $\mathbb{R}^N$.
\end{mydef}

\begin{myremark}\label{MeasuresAndVectors}
Note that these splittings are consistent in the following way. Consider a measure $\nu$ as a vector in $\mathbb{R}^N$, so that $\nu(\{y\}) = y^{\ast}\nu$ for each $y \in \States$. Similarly, consider the split measure $\check{\nu}$ as a vector in $\mathbb{R}^{2N}$, so that
$$\check{\nu} (\{ \check{y}_0 \}) = \left[ \begin{array}{c}
y \\
0
\end{array}
\right]^{\ast} \check{\nu}, \ \ \ \ \ \ \check{\nu} (\{ \check{y}_1 \}) = \left[ \begin{array}{c}
0 \\
y
\end{array}
\right]^{\ast} \check{\nu}$$
for each $y \in \States$. Then we see that the split measure $\check{\nu}$ corresponds precisely to the split vector $\check{\Pi} (\nu)$.

Notice that if $\nu$ is a probability measure on $\States$, then $\check{\nu}$ is a probability measure on $\check{\States}$. Equivalently, if $\phi$ is a stochastic vector in $\mathbb{R}^N$, then $\check{\Pi} (\phi)$ is a stochastic vector in $\mathbb{R}^{2N}$.
\end{myremark}

\begin{mydef}
We can define the splitting of a matrix by splitting its columns as in the previous definition. That is, given a matrix $M$, define
$$\check{M} := \left[ \begin{array}{c}
\left( 1 - \gamma \right) M \\
\gamma M
\end{array}
\right].$$
\end{mydef}

\begin{mydef}\label{DefnSplitMatrix}
Recall the assumptions of Theorem~\ref{BisUniformlyErgodic}, in particular that $A$ and $B$ are transition matrices with $B \succeq_{\gamma} A$ for some $\gamma \in (0,1)$. Let
$$C = \left( 1 - \gamma \right)^{-1} \left( B - \gamma A \right),$$
and let $\mathcal{B} = \left[ \, \check{C} \, \middle| \, \check{A} \, \right]$, i.e. the augmented matrix formed from $\check{C}$ and $\check{A}$. Note that $C$ and $\mathcal{B}$ are both transition matrices.
\end{mydef}

We can now define a new Markov chain $\check{X}$ on the split space $\check{\States}$, which jumps according to transition matrix $\mathcal{B}$. Intuitively, transitions occur from a state $\check{x} \in \States_0$ according to the vector $Cx$, and from a state $\check{x} \in \States_1$ according to the vector $Ax$ (where $x$ is the projection of $\check{x}$ in $\States$), except that the result is randomly split between the layers $\States_0$ and $\States_1$ with probabilities $1 - \gamma$ and $\gamma$ respectively.

\begin{mylemma}\label{CommutePiwithB}
Let $\phi \in \mathbb{R}^N$. Then $\check{\Pi}(B\phi) = \mathcal{B} \, \check{\Pi}(\phi)$.
\end{mylemma}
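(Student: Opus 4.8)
The plan is to verify the identity by a single direct block-matrix computation, exploiting the fact that the augmented matrix $\mathcal{B} = [\,\check{C}\mid\check{A}\,]$ acts on a block vector $\left[\begin{smallmatrix}u\\v\end{smallmatrix}\right]$ (with $u,v\in\Reals^N$) by returning $\check{C}u + \check{A}v$. Applying this to the split vector $\check{\Pi}(\phi) = \left[\begin{smallmatrix}(1-\gamma)\phi\\\gamma\phi\end{smallmatrix}\right]$, I would first record that
$$\mathcal{B}\,\check{\Pi}(\phi) = (1-\gamma)\,\check{C}\phi + \gamma\,\check{A}\phi.$$

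Next I would substitute the definitions of the matrix splittings, namely $\check{C} = \left[\begin{smallmatrix}(1-\gamma)C\\\gamma C\end{smallmatrix}\right]$ and $\check{A} = \left[\begin{smallmatrix}(1-\gamma)A\\\gamma A\end{smallmatrix}\right]$, and then collect the upper and lower $\Reals^N$ blocks separately. Each block is seen to contain a common factor of the combination $(1-\gamma)C + \gamma A$: the upper block works out to $(1-\gamma)\big[(1-\gamma)C + \gamma A\big]\phi$ and the lower block to $\gamma\big[(1-\gamma)C + \gamma A\big]\phi$.

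The crux of the argument --- such as it is --- is the algebraic identity $(1-\gamma)C + \gamma A = B$, which is immediate from the definition $C = (1-\gamma)^{-1}(B - \gamma A)$ in Definition~\ref{DefnSplitMatrix}. Substituting this, the upper block becomes $(1-\gamma)B\phi$ and the lower block becomes $\gamma B\phi$, so that $\mathcal{B}\,\check{\Pi}(\phi) = \left[\begin{smallmatrix}(1-\gamma)B\phi\\\gamma B\phi\end{smallmatrix}\right] = \check{\Pi}(B\phi)$, as claimed. There is no genuine obstacle here; the computation is routine, and the only thing that requires care is correctly tracking the $2N$-dimensional block structure together with the convention by which the augmented matrix $\mathcal{B}$ multiplies a split vector. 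Once that bookkeeping is set up, the asserted commutation relation drops out of the single cancellation $(1-\gamma)C + \gamma A = B$. I would also remark that this is precisely the point of choosing $C$ as it is defined: the splitting $\check\Pi$ is designed so that the $\mathcal{B}$-dynamics on $\check{\States}$ project down to the $B$-dynamics on $\States$, and this lemma is the algebraic manifestation of that design, which later lets us transfer ergodicity estimates between the split chain $\check{X}$ and the original chain $X$.
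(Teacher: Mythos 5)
Your computation is correct and is essentially the paper's own proof: both start from $\mathcal{B}\,\check{\Pi}(\phi) = (1-\gamma)\check{C}\phi + \gamma\check{A}\phi$ and reduce to the identity $(1-\gamma)C + \gamma A = B$, the only cosmetic difference being that the paper routes through $\check{M}\phi = \check{\Pi}(M\phi)$ while you write out the two $\Reals^N$ blocks explicitly. No issues.
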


\begin{proof}
By the definitions of $\check{\Pi}$, $\mathcal{B}$ and $C$, we have
\begin{align*}
\mathcal{B} \, \check{\Pi}(\phi) &= (1 - \gamma) \check{C} \phi + \gamma \check{A} \phi \\
&= (1 - \gamma) \check{\Pi} (C\phi) + \gamma \check{\Pi} (A\phi) = \check{\Pi} (B\phi).
\end{align*}
\end{proof}

\begin{myprop}\label{MarginalDistributions}
Let $X$ be a Markov chain on $\States$ with initial distribution $X_0 \sim \nu$ and transition matrix $B$, and let $\check{X}$ be a Markov chain on $\check{\States}$ with initial distribution $\check{X}_0 \sim \check{\nu}$ and transition matrix $\mathcal{B}$. Then $X$ and $\check{X}$ have the same marginal distributions, ignoring the splitting. That is, for any $t$ and any $y \in \States$,
$$y^{\ast} \E [X_t] = \Prob (X_t = y) = \Prob (\check{X}_t \in \{\check{y}_0, \check{y}_1\}).$$
\end{myprop}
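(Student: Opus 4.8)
The plan is to identify each marginal law with a vector and then transport the one-step commutation relation of Lemma~\ref{CommutePiwithB} to $t$ steps by induction.

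First I would record the two ``law-as-vector'' identities. Since $X_t$ takes values in the standard basis $\States$, the vector $\E[X_t]\in\Reals^N$ is exactly the distribution of $X_t$, so the first equality $y^{\ast}\E[X_t]=\Prob(X_t=y)$ is immediate. Because $B$ is the transition matrix of $X$ (with columns giving the transition distributions), one has $\E[X_{t+1}\mid\F_t]=BX_t$, whence $\E[X_t]=B^t\nu$. Running the same argument on the split chain, whose transition matrix is $\mathcal{B}$, shows that the law of $\check{X}_t$, viewed as a vector in $\Reals^{2N}$, equals $\mathcal{B}^t\check{\nu}$; and by Remark~\ref{MeasuresAndVectors} we have $\check{\nu}=\check{\Pi}(\nu)$, so this law is $\mathcal{B}^t\check{\Pi}(\nu)$.

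Next I would iterate Lemma~\ref{CommutePiwithB}. The claim is that $\check{\Pi}(B^t\phi)=\mathcal{B}^t\check{\Pi}(\phi)$ for all $t\geq 0$ and all $\phi\in\Reals^N$. The base case $t=0$ is trivial, and for the inductive step one writes
$$\mathcal{B}^{t+1}\check{\Pi}(\phi)=\mathcal{B}\big(\mathcal{B}^t\check{\Pi}(\phi)\big)=\mathcal{B}\,\check{\Pi}(B^t\phi)=\check{\Pi}\big(B\cdot B^t\phi\big)=\check{\Pi}(B^{t+1}\phi),$$
using the induction hypothesis at the second step and Lemma~\ref{CommutePiwithB} at the third. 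Applying this with $\phi=\nu$ shows that the law of $\check{X}_t$ is $\check{\Pi}(B^t\nu)=\check{\Pi}(\E[X_t])$; that is, the marginal of $\check{X}_t$ is precisely the splitting of the marginal of $X_t$.

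It then remains only to ``unsplit''. By Definition~\ref{DefnSplit}, for any measure $\mu$ on $\States$ the split measure satisfies $\check{\mu}(\{\check{y}_0\})+\check{\mu}(\{\check{y}_1\})=(1-\gamma)\mu(\{y\})+\gamma\mu(\{y\})=\mu(\{y\})$. Taking $\mu$ to be the law of $X_t$ and using the previous paragraph gives $\Prob(\check{X}_t\in\{\check{y}_0,\check{y}_1\})=\Prob(X_t=y)$, as required. There is no substantial obstacle here: the essential content is the commutation relation already in hand, and the only point requiring care is keeping the measure/vector identifications of Remark~\ref{MeasuresAndVectors} consistent so that the one-step commutation propagates cleanly through the induction and the final layer-sum recovers the original marginal.
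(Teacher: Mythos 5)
Your proposal is correct and follows essentially the same route as the paper: identify the laws with $B^t\nu$ and $\mathcal{B}^t\check{\nu}$ via Chapman--Kolmogorov, iterate Lemma~\ref{CommutePiwithB} to obtain $\check{\Pi}(B^t\nu)=\mathcal{B}^t\check{\Pi}(\nu)$, and unsplit by summing the two layers. The only difference is that you write out explicitly the induction and the final layer-sum $(1-\gamma)+\gamma=1$, which the paper leaves implicit.
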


\begin{proof}
Consider $\nu$ as a vector in $\mathbb{R}^N$ and $\check{\nu}$ as a vector in $\mathbb{R}^{2N}$, as in Remark~\ref{MeasuresAndVectors}. By the Chapman--Kolmogorov equation, the distribution of $X_t$ is given by $\E [X_t] = B^t\nu$, and the distribution of $\check{X}_t$ is given by $\E [\check{X}_t] = \mathcal{B}^t\check{\nu}$. By applying Lemma~\ref{CommutePiwithB} $t$ times, we obtain $\check{\Pi} (B^t\nu) = \mathcal{B}^t \, \check{\Pi} (\nu)$. Recall from Remark~\ref{MeasuresAndVectors} that $\check{\Pi} (\nu) = \check{\nu}$. Hence, we have $\check{\Pi} (\E [X_t]) = \E [\check{X}_t]$, and the result follows.
\end{proof}

The following lemma follows easily from the definition of $\mathcal{B}$ and the basic notion of conditional probability.

\begin{mylemma}\label{StayInLayerOne}
Let $\check{X}$ be as in the above proposition. Write $\E^{\ast}$ for the expectation conditioned on $\check{X}$ never leaving the layer it starts in (i.e. never jumps from $\States_0$ to $\States_1$, or from $\States_1$ to $\States_0$). Then, under $\E^{\ast}$, $\check{X}$ jumps according to the transition matrix
$$\left[ \begin{array}{cc}
C & 0 \\
0 & A
\end{array}
\right].$$
\end{mylemma}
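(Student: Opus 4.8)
The plan is to verify the claim directly from the structure of the transition matrix $\mathcal{B} = \left[ \, \check{C} \, \middle| \, \check{A} \, \right]$ together with the definitions of $\check{C}$ and $\check{A}$ as column-splittings, computing the conditional transition probabilities under the event that the chain remains in its starting layer. First I would recall, from Definition~\ref{DefnSplitMatrix}, that the column of $\mathcal{B}$ associated with a state $\check{x}_0 \in \States_0$ (that is, $x$ in layer $0$) is the column of $\check{C}$ corresponding to $x$, namely $\check{\Pi}(Cx) = \left[ \begin{array}{c} (1-\gamma)Cx \\ \gamma Cx \end{array} \right]$, and similarly the column of $\mathcal{B}$ associated with $\check{x}_1 \in \States_1$ is $\check{\Pi}(Ax) = \left[ \begin{array}{c} (1-\gamma)Ax \\ \gamma Ax \end{array} \right]$. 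Thus from a state in layer $0$, the chain moves to $\check{y}_0$ with probability $(1-\gamma)(Cx)_y$ and to $\check{y}_1$ with probability $\gamma (Cx)_y$; the probability of staying in layer $0$ is $\sum_y (1-\gamma)(Cx)_y = 1-\gamma$, since $C$ is a transition matrix and $Cx$ is a stochastic vector. Symmetrically, from a state in layer $1$ the probability of staying in layer $1$ is $\gamma$.

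Next I would apply the definition of conditional probability one step at a time. Conditioning on the event that the chain never leaves its starting layer amounts, at each step, to conditioning on the \emph{next} transition landing in the same layer, since the Markov property makes these events combine multiplicatively along the trajectory. For a chain starting in $\States_0$, conditioning the one-step law $\check{\Pi}(Cx)$ on remaining in $\States_0$ gives, for each target $\check{y}_0$, the probability $(1-\gamma)(Cx)_y / (1-\gamma) = (Cx)_y$; that is, within layer $0$ the conditioned chain jumps according to $C$. Likewise, for a chain starting in $\States_1$, conditioning $\check{\Pi}(Ax)$ on remaining in $\States_1$ yields $\gamma (Ax)_y / \gamma = (Ax)_y$, so within layer $1$ the conditioned chain jumps according to $A$. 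Since the two layers are decoupled under $\E^{\ast}$ (no transitions cross between them), the conditioned transition matrix is block-diagonal with blocks $C$ and $A$, which is precisely the asserted matrix $\left[ \begin{array}{cc} C & 0 \\ 0 & A \end{array} \right]$.

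The only genuinely careful point—and the one I would treat as the main obstacle, though it is more bookkeeping than difficulty—is justifying that conditioning on the global event ``$\check{X}$ never leaves its starting layer'' reduces to independently conditioning each step on staying in the layer, and that the per-step normalising constants ($1-\gamma$ in layer $0$, $\gamma$ in layer $1$) are identical regardless of the current state within the layer. The latter is exactly what makes the conditioning clean: because $Cx$ and $Ax$ are always stochastic vectors, the probability of staying in the starting layer is the constant $1-\gamma$ (resp. $\gamma$) at every step, independent of the current position. This state-independence of the normaliser is what lets the conditioning factorise across time via the Markov property, so the conditioned law is again Markov with the stated block-diagonal transition matrix. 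This is the content of the remark preceding the lemma that it ``follows easily from the definition of $\mathcal{B}$ and the basic notion of conditional probability.''
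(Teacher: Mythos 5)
Your argument is correct and is exactly the elaboration the paper intends: the paper offers no proof beyond the remark that the lemma ``follows easily from the definition of $\mathcal{B}$ and the basic notion of conditional probability,'' and your computation---columns of $\mathcal{B}$ from $\States_0$ are $\check{\Pi}(Cx)$ and from $\States_1$ are $\check{\Pi}(Ax)$, with the state-independent normalisers $1-\gamma$ and $\gamma$ making the conditioning factorise over time---is precisely that easy argument made explicit. The one point you could state more sharply is that the event ``never leaves its layer'' is null, so $\E^{\ast}$ should be read via conditioning on staying in the layer for the first $n$ steps (your factorisation shows the resulting law of any finite-horizon functional is independent of $n$, which is all the paper ever uses).
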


Intuitively, under $\E^{\ast}$, $\check{X}$ either jumps in $\States_0$ following transition matrix $C$, or in $\States_1$ following transition matrix $A$, depending on which layer it starts in.

\subsection{Exponential moment bounds}

We will now consider two independent copies of our Markov chain on the split space, and show that the first meeting time of these chains admits exponential moments.

\begin{mythm}\label{ExpMoments}
Let $\check{X}$ and $\check{Y}$ be two independent Markov chains on the split space $\check{\States}$, each following transition matrix $\mathcal{B}$, as defined in Definition~\ref{DefnSplitMatrix}. Let $\check{S} := \inf \{t \geq 0 : \check{X}_t = \check{Y}_t\}$. Then, for any $\epsilon > 0$, there exists $\tilde{\beta} > 0$ such that
$$H^{\ast}(\beta) := \sup_{\check{x}, \check{y} \in \check{\States}} \E_{\check{x}\check{y}} \big[ e^{\beta \check{S}} \big] \leq 1 + \epsilon \ \ \ \ \text{for all} \ \ \ \ \beta \in \big[ 0,\tilde{\beta} \big],$$
where $\E_{\check{x}\check{y}}$ is the expectation conditional on $\check{X}_0 = \check{x}$ and $\check{Y}_0 = \check{y}$. Furthermore, $\tilde{\beta}$ does not depend on $B$ except through $A$ and $\gamma$.
\end{mythm}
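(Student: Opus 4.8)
The plan is to exploit the defining feature of the Nummelin splitting. As described after Definition~\ref{DefnSplitMatrix} and formalised in Lemma~\ref{StayInLayerOne}, a transition of the split chain from a state in $\States_1$ moves the $\States$-component according to the \emph{fixed} matrix $A$ (which does not depend on $B$), from a state in $\States_0$ according to $C$, and in either case the new \emph{layer} is chosen to be $1$ with probability $\gamma$ and $0$ with probability $1-\gamma$, independently of the $\States$-component. Thus, writing $\check{X}_t = (X_t, L^X_t)$, the layer sequence $L^X_1, L^X_2, \ldots$ is i.i.d.\ Bernoulli($\gamma$), independent of the randomness driving the $\States$-transitions, and the step $t \to t+1$ of the $\States$-component uses $A$ precisely when $L^X_t = 1$; the same holds for $\check{Y}$, independently. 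The key point is that \emph{while both chains sit in layer $1$}, their $\States$-components evolve as two independent $A$-chains, for which Proposition~\ref{UniErgEquivalence}(\textit{iii}) supplies exponential meeting-time moments depending only on $A$.

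Concretely, I would first fix, using Proposition~\ref{UniErgEquivalence}(\textit{iii}) together with Markov's inequality, an integer $k$ (depending only on $A$) such that two independent copies of the $A$-chain, started from \emph{any} pair of states, satisfy $X_s = Y_s$ for some $s$ within $k$ steps with probability at least $1/2$. Next I would prove a single-shot lower bound: conditioning on the event $\mathcal{L} = \{L^X_s = L^Y_s = 1 \text{ for all } s \in \{1,\ldots,k\}\}$, which has probability $\gamma^{2k}$ and is independent of the transition noise, the components $(X_1,\ldots,X_k)$ and $(Y_1,\ldots,Y_k)$ are genuine independent $A$-chains; if they meet at some $s \leq k$ then, since both chains lie in layer $1$ at that time, $\check{X}_s = \check{Y}_s$. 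Hence $\Prob_{\check{x}\check{y}}(\check{S} \leq k) \geq \gamma^{2k}/2 =: 1 - q > 0$, uniformly over starting states $\check{x}, \check{y}$ and over all $B$ with $B \succeq_{\gamma} A$.

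I would then iterate this estimate using the strong Markov property of the pair $(\check{X}, \check{Y})$, which is itself a Markov chain on $\check{\States} \times \check{\States}$: restarting from $(\check{X}_k, \check{Y}_k)$ after a failed window and reapplying the same uniform bound gives $\Prob_{\check{x}\check{y}}(\check{S} > nk) \leq q^n$ for every $n$, again uniformly in the starting states and in $B$. This geometric tail yields a finite constant $K$ and an exponent $\beta^{\ast} > 0$, both depending only on $A$ and $\gamma$, with $H^{\ast}(\beta^{\ast}) \leq K$. Finally, since $\check{S} \geq 0$, I would feed this into Lemma~\ref{ImproveBound} (with the supremum over $\check{x}, \check{y}$ playing the role of the supremum over $\nu$) to conclude that $H^{\ast}(\beta) \leq 1 + \epsilon$ for all $\beta \in [0, \tilde{\beta}]$, where $\tilde{\beta} := \beta^{\ast}(\epsilon/K \wedge 1)$ depends on $B$ only through $A$, $\gamma$ and the given $\epsilon$, as required.

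I expect the delicate part to be the decoupling in the single-shot step: one must argue carefully that conditioning on the all-ones layer event $\mathcal{L}$ genuinely turns the two $\States$-components into independent $A$-chains while leaving their meeting probability unaffected, i.e.\ that the layer process and the $\States$-transition noise can be realised independently (this is exactly the content underlying Lemma~\ref{StayInLayerOne}). Once this independence is in place, the exponential-moment bound follows from standard geometric-tail bookkeeping, and the uniformity in $B$ is automatic, since $k$, $q$, $K$ and $\beta^{\ast}$ are all expressed through $A$ and $\gamma$ alone.
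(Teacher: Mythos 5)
Your proof is correct (up to trivial bookkeeping: to guarantee the $\States$-components get $k$ full $A$-steps inside the all-ones layer window, take the window to be $\{1,\ldots,k+1\}$, costing one extra factor of $\gamma^2$), but it takes a genuinely different and more elementary route than the paper. The paper decomposes the meeting time via the auxiliary time $\check{T}$ according to the number $K_{\check{T}}$ of arrivals in $\States_1\times\States_1$: Lemma~\ref{BoundforExpMoment} applies Cauchy--Schwarz to separate the arrival count from the arrival times $t_k$, Lemma~\ref{BoundOnProb} bounds $\Prob_{\check{x}\check{y}}(K_{\check{T}}=k)$ by a geometric trials argument with success probability $q_{\gamma}=\inf_{x,y}\sum_{n}q(x,y;n)\gamma^{2n}$, and Lemma~\ref{BoundOnMGF} controls $\E_{\check{x}\check{y}}[e^{2\beta t_k}]$ through sums of geometric random variables, after which the three bounds are summed. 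You replace all of this with a single uniform minorization over a fixed window: the all-ones layer event has probability $\gamma^{2k}$ independently of the transition noise, on that event the $\States$-components are independent $A$-chains meeting with probability at least $1/2$, and iterating via the strong Markov property of the pair chain gives the geometric tail $\Prob_{\check{x}\check{y}}(\check{S}>nk)\leq q^{n}$; both routes then finish with Lemma~\ref{ImproveBound}. The paper's finer decomposition buys a sharper and more explicit bound on $H^{\ast}(\beta)$ (your success probability $\gamma^{2k}/2$ degrades rapidly in $k$, whereas $q_{\gamma}$ retains the full meeting-time distribution weighted by $\gamma^{2n}$); your route buys brevity and avoids the Cauchy--Schwarz step and the separate analysis of the $t_k$. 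Since the theorem only asserts the existence of some $\tilde{\beta}>0$ depending on $B$ only through $A$ and $\gamma$, and all of your constants $k$, $q$, $K$, $\beta^{\ast}$ are manifestly functions of $A$ and $\gamma$ alone (the $B$-dependent matrix $C$ enters only through the first transition of each window and is absorbed by the uniformity of the meeting bound over starting pairs), your argument suffices. The one point that genuinely needs care, which you correctly flag, is that conditioning on the all-ones layer event does not distort the law of the $\States$-transition noise; this holds because, by the form of $\mathcal{B}$, the layer drawn at each step is independent of the simultaneously drawn $\States$-coordinate, which is exactly the product structure underlying Lemma~\ref{StayInLayerOne}.
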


The proof of this theorem will be done in a number of steps. First, we will introduce some more notation.

Let $K_t$ denote the number of jumps, up to time $t$, that result in both $\check{X}$ and $\check{Y}$ being in $\States_1$ when they were not both in $\States_1$ previously, and denote by $t_k$ the time of the $k$\textsuperscript{th} such transition. That is, let $t_0 = 0$ and
$$t_k = \inf \{t > t_{k-1} : (\check{X}_t, \check{Y}_t \in \States_1) \setminus (\check{X}_{t-1}, \check{Y}_{t-1} \in \States_1)\}$$
for $k \geq 1$. Finally, let
$$\check{T} = \inf \{t \geq 1 : (\check{X}_t = \check{Y}_t \in \States_1) \cap (\check{X}_{t-1}, \check{Y}_{t-1} \in \States_1)\},$$
i.e. the first time the chains meet in $\States_1$, both having jumped from a state in $\States_1$.

\begin{mylemma}\label{BoundforExpMoment}
Let $G^{\ast}(\beta)$ be the function defined in Proposition~\ref{UniErgEquivalence}, that is, the supremum over starting states of the moment generating function of the first meeting time on the basic (unsplit) state space $\States$ of two independent Markov chains with transition matrix $A$. Then, for any $\check{x},\check{y} \in \check{\States}$,
\begin{equation}\label{eq:BoundExpMoment}
\E_{\check{x}\check{y}} \big[ e^{\beta \check{T}} \big] \leq G^{\ast}(2\beta)^{\frac{1}{2}} \sum_{k=0}^{\infty} \E_{\check{x}\check{y}} \big[ e^{2\beta t_k} \big]^{\frac{1}{2}} \Prob_{\check{x}\check{y}} (K_{\check{T}} = k)^{\frac{1}{2}},
\end{equation}
where $\Prob_{\check{x}\check{y}}$ is the probability conditioned on $\check{X}_0 = \check{x}$ and $\check{Y}_0 = \check{y}$.
\end{mylemma}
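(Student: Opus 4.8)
The plan is to decompose the coupling time $\check{T}$ according to the number $K_{\check{T}}$ of excursions into the region $\{\check{X},\check{Y}\in\States_1\}$, bound each term by the Cauchy--Schwarz inequality, and then reduce the remaining ``inner'' meeting time to the first meeting time of two unsplit chains driven by $A$. Writing $E_k := \{K_{\check{T}} = k\}$, I would first split
$$\E_{\check{x}\check{y}}\big[e^{\beta\check{T}}\big] = \sum_{k=0}^{\infty} \E_{\check{x}\check{y}}\big[e^{\beta\check{T}}\Ind_{E_k}\big],$$
and note that on $E_k$ one has $\check{T} = t_k + (\check{T} - t_k)$, so that $e^{\beta\check{T}}\Ind_{E_k} = e^{\beta t_k}\cdot e^{\beta(\check{T} - t_k)}\Ind_{E_k}$. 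Applying Cauchy--Schwarz to each summand with the factors $e^{\beta t_k}$ and $e^{\beta(\check{T} - t_k)}\Ind_{E_k}$ gives
$$\E_{\check{x}\check{y}}\big[e^{\beta\check{T}}\Ind_{E_k}\big] \leq \E_{\check{x}\check{y}}\big[e^{2\beta t_k}\big]^{\frac12}\,\E_{\check{x}\check{y}}\big[e^{2\beta(\check{T} - t_k)}\Ind_{E_k}\big]^{\frac12}.$$
It then remains to establish the pointwise bound
$$\E_{\check{x}\check{y}}\big[e^{2\beta(\check{T} - t_k)}\Ind_{E_k}\big] \leq G^{\ast}(2\beta)\,\Prob_{\check{x}\check{y}}(E_k),$$
since substituting this in and summing over $k$ yields precisely (\ref{eq:BoundExpMoment}).

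To prove this bound I would condition on $\F_{t_k}$ and invoke the strong Markov property at the stopping time $t_k$. The crucial structural observation is that on $E_k$ the two chains remain in $\{\check{X},\check{Y}\in\States_1\}$ continuously from $t_k$ up to $\check{T}$: if they were to leave this region and later both return to $\States_1$, that return would be a further entry time $t_{k+1}\leq\check{T}$, contradicting $K_{\check{T}} = k$. Consequently, by Lemma~\ref{StayInLayerOne}, the projected positions of $\check{X}$ and $\check{Y}$ evolve over $[t_k,\check{T}]$ as two independent Markov chains with transition matrix $A$, started from the projected time-$t_k$ states, and $\check{T} - t_k$ coincides with their first meeting time, which I denote $\sigma$. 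Moreover, at each step the layer assignment is an independent Bernoulli$(\gamma)$ choice made independently of the projected motion, so the conditional probability of both chains remaining in $\States_1$ for the required $\sigma$ steps equals $\gamma^{2\sigma}$ given the projected trajectories. Writing $\E_A$ and $\Prob_A$ for the law of the two projected $A$-chains, this yields the two identities
$$\E_{\check{x}\check{y}}\big[e^{2\beta(\check{T} - t_k)}\Ind_{E_k}\,\big|\,\F_{t_k}\big] = \E_A\big[e^{2\beta\sigma}\gamma^{2\sigma}\big], \qquad \Prob_{\check{x}\check{y}}(E_k\,|\,\F_{t_k}) = \E_A\big[\gamma^{2\sigma}\big].$$

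The argument then closes with a correlation inequality. Since $\sigma\mapsto e^{2\beta\sigma}$ is increasing and $\sigma\mapsto\gamma^{2\sigma}$ is decreasing (as $\gamma\in(0,1)$), these are negatively correlated as functions of the single random variable $\sigma$, so by the standard correlation (Chebyshev sum) inequality
$$\E_A\big[e^{2\beta\sigma}\gamma^{2\sigma}\big] \leq \E_A\big[e^{2\beta\sigma}\big]\,\E_A\big[\gamma^{2\sigma}\big] \leq G^{\ast}(2\beta)\,\E_A\big[\gamma^{2\sigma}\big],$$
where the final inequality uses the definition of $G^{\ast}$ as the supremum over starting states of the moment generating function of the $A$-meeting time. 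Combining this with the two conditional identities above and taking $\E_{\check{x}\check{y}}[\,\cdot\,]$ via the tower property gives the required pointwise bound, completing the proof. I expect the main obstacle to be the careful justification of the middle step: namely, that on $E_k$ the chains genuinely stay in $\States_1$ throughout $[t_k,\check{T}]$, that their projected dynamics are then exactly those of $A$, and that the layer splittings factor out as the clean weight $\gamma^{2\sigma}$ independently of the projected meeting time. Once these facts are secured, the Cauchy--Schwarz and correlation steps are entirely routine.
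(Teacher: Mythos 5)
Your proof is correct and follows essentially the same route as the paper: the same decomposition over the events $\{K_{\check{T}}=k\}$, the same Cauchy--Schwarz step, and the same reduction of the post-$t_k$ meeting time to the first meeting time of two $A$-chains via Lemma~\ref{StayInLayerOne}. The only difference is that where the paper bounds $\E_{\check{x}\check{y}}\big[e^{2\beta(\check{T}-t_k)}\,\big|\,K_{\check{T}}=k\big]$ by $\sup_{x,y\in\States_1}\E^{\ast}_{xy}\big[e^{2\beta\check{T}}\big]=G^{\ast}(2\beta)$ with a brief appeal to the conditioning, you justify the same bound more explicitly, by factoring the layer assignments into the weight $\gamma^{2\sigma}$ and applying the correlation inequality for oppositely monotone functions of $\sigma$ --- a slightly more careful account of the same step.
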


\begin{proof}
Note that, conditioned on $\{K_{\check{T}} = k\}$, $\check{X}$ and $\check{Y}$ do not leave $\States_1$ between $t_k$, the $k$\textsuperscript{th} time they arrive in $\States_1$ when they were not both in $\States_1$ previously, and $\check{T}$, the time at which they next meet in $\States_1$. Write $\E^{\ast}$ for the expectation conditioned on both $\check{X}$ and $\check{Y}$ not leaving the layers they start in. Then
$$\E_{\check{x}\check{y}} \big[ e^{2\beta (\check{T} - t_k)} \, \big| \, K_{\check{T}} = k \big] \leq \sup_{x,y \in \States_1} \E^{\ast}_{xy} \big[ e^{2\beta \check{T}} \big] = G^{\ast}(2\beta),$$
since, by Lemma~\ref{StayInLayerOne}, under $\E^{\ast}$, $\check{X}$ and $\check{Y}$ jump in $\States_1$ following transition matrix $A$. Then
\begin{align*}
\E_{\check{x}\check{y}} \big[ e^{\beta \check{T}} \Ind_{\{K_{\check{T}} = k\}} \big] &= \E_{\check{x}\check{y}} \Big[ e^{\beta t_k} \E_{\check{x}\check{y}} \big[ e^{\beta (\check{T} - t_k)} \Ind_{\{K_{\check{T}} = k\}} \, \big| \, t_k \big] \Big] \\
&\leq \E_{\check{x}\check{y}} \big[ e^{2\beta t_k} \big]^{\frac{1}{2}} \E_{\check{x}\check{y}} \bigg[ \left( \E_{\check{x}\check{y}} \big[ e^{\beta (\check{T} - t_k)} \Ind_{\{K_{\check{T}} = k\}} \, \big| \, t_k \big] \right)^2 \bigg]^{\frac{1}{2}} \\
&\leq \E_{\check{x}\check{y}} \big[ e^{2\beta t_k} \big]^{\frac{1}{2}} \E_{\check{x}\check{y}} \Big[ \E_{\check{x}\check{y}} \big[ e^{2\beta (\check{T} - t_k)} \Ind_{\{K_{\check{T}} = k\}} \, \big| \, t_k \big] \Big]^{\frac{1}{2}} \\
&= \E_{\check{x}\check{y}} \big[ e^{2\beta t_k} \big]^{\frac{1}{2}} \left( \E_{\check{x}\check{y}} \big[ e^{2\beta (\check{T} - t_k)} \, \big| \, K_{\check{T}} = k \big] \Prob_{\check{x}\check{y}} (K_{\check{T}} = k) \right)^{\frac{1}{2}} \\
&\leq \E_{\check{x}\check{y}} \big[ e^{2\beta t_k} \big]^{\frac{1}{2}} \big( G^{\ast}(2\beta) \, \Prob_{\check{x}\check{y}} (K_{\check{T}} = k) \big)^{\frac{1}{2}},
\end{align*}
and the result follows.
\end{proof}

We shall now seek to bound the components of the sum in Lemma~\ref{BoundforExpMoment}.

\begin{mylemma}\label{BoundOnProb}
For states $x,y \in \States$, consider a pair of independent basic (unsplit) chains $X,Y$ following transition matrix $A$, with starting values $X_0 = x$ and $Y_0 = y$. For $n \geq 1$, let $q(x,y;n)$ denote the probability that the first positive time the chains meet is at time $n$. Let
$$q_{\gamma} := \inf_{x,y \in \States} \sum_{n=1}^{\infty} q(x,y;n) \gamma^{2n},$$
where we recall that $\gamma \in (0,1)$. Then $q_{\gamma} \in (0,1)$ and, for any $\check{x},\check{y} \in \check{\States}$ and any $k \geq 0$,
$$\Prob_{\check{x}\check{y}} (K_{\check{T}} = k) \leq \gamma^2 (1 - q_{\gamma})^{k-1}.$$
\end{mylemma}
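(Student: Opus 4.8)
The plan is to exploit the product structure of $\mathcal{B}$ established in the preceding lemmas: at each transition, each of $\check{X}$ and $\check{Y}$ lands in $\States_1$ with probability $\gamma$ and in $\States_0$ with probability $1-\gamma$ according to an independent coin flip, while (by Lemma~\ref{StayInLayerOne}) conditional on remaining in $\States_1$ the two projected chains move as independent copies of the basic $A$-chain. I would first dispose of $q_\gamma \in (0,1)$. The upper bound is immediate: since $\gamma^{2n} \leq \gamma^2$ for every $n \geq 1$ and $\sum_{n \geq 1} q(x,y;n) \leq 1$, each inner sum is at most $\gamma^2$, so $q_\gamma \leq \gamma^2 < 1$. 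For positivity, two independent copies of a uniformly ergodic chain meet at a finite positive time almost surely (by Proposition~\ref{UniErgEquivalence}), so each inner sum is strictly positive; as the infimum ranges over the finitely many pairs $(x,y) \in \States \times \States$, it is attained and hence $q_\gamma > 0$.

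For the main estimate, I would organise the trajectory of $(\check{X}, \check{Y})$ around the fresh co-arrival times $t_1 < t_2 < \cdots$, calling the stretch during which both chains remain in $\States_1$ after $t_j$ the $j$th \emph{sojourn}. For $k \geq 1$, the event $\{K_{\check{T}} = k\}$ is contained in the event that $\check{T}$ fails to occur during sojourns $1, \ldots, k-1$ and does occur during sojourn $k$. Applying the strong Markov property at each $t_j$ reduces the whole computation to two one-sojourn bounds that must hold uniformly over the (in-$\States_1$) positions reached at $t_j$, which is precisely why these bounds are phrased via an infimum over starting states.

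The two bounds are as follows. For the failure bound, conditional on the positions at $t_j$ the projected chains are independent $A$-chains; if they meet at their first positive meeting time $n$ (probability $q(\cdot,\cdot;n)$) and both chains happen to remain in $\States_1$ for those $n$ steps (independent probability $\gamma^{2n}$), then $\check{T}$ occurs within this sojourn. Hence the sojourn-$j$ success probability is at least $\sum_{n} q(x_j,y_j;n)\gamma^{2n} \geq q_\gamma$, so its failure probability is at most $1 - q_\gamma$; this is exactly why the weights $\gamma^{2n}$ enter the definition of $q_\gamma$. For the success bound, any realisation of $\check{T}$ in sojourn $j$ forces both chains to remain in $\States_1$ for at least the single step $t_j \to t_j + 1$, an event of probability $\gamma^2$ that is independent of the projected motion; thus the success probability in any sojourn is at most $\gamma^2$. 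Multiplying the $k-1$ failure factors by the single success factor yields $\Prob_{\check{x}\check{y}}(K_{\check{T}} = k) \leq \gamma^2(1 - q_\gamma)^{k-1}$. The case $k = 0$, possible only when both chains start in $\States_1$, is covered by the same success bound, since the right-hand side then exceeds $\gamma^2$.

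I expect the main obstacle to be the rigorous bookkeeping of the strong Markov decomposition across the random times $t_j$: one must justify that, conditionally on reaching sojourn $j$ at given positions in $\States_1$, the projected increments and the layer indicators are genuinely independent (the former an $A$-chain, the latter independent Bernoulli($\gamma$) indicators), and that the per-sojourn failure and success bounds hold uniformly in those positions so that they may legitimately be chained multiplicatively across sojourns.
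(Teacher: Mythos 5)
Your proposal is correct and follows essentially the same route as the paper: the paper likewise treats the successive co-arrival times $t_1,t_2,\ldots$ as geometric trials, bounds the per-sojourn success probability by $\sum_n q(\cdot,\cdot;n)\gamma^{2n} \leq \gamma^2$ and the per-sojourn failure probability by $1-q_\gamma$, and chains these via the strong Markov property (writing the probability as an expectation of a telescoping product rather than as an event containment, which is only a cosmetic difference). The paper additionally distinguishes the case where both chains start in $\States_1$, obtaining the slightly sharper exponent $(1-q_\gamma)^k$ there and $\Prob_{\check{x}\check{y}}(K_{\check{T}}=0)=0$ otherwise, but your uniform bound is exactly what the lemma asserts.
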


\begin{proof}
The fact that $q_{\gamma} \in (0,1)$ is clear, as we are working with a finite state space, and the independent chains almost surely meet after a finite time.

Now, suppose first that $\check{x},\check{y} \in \States_1$. Then, under $\Prob_{\check{x}\check{y}}$, $\{K_{\check{T}} = 0\}$ is the event that the split chains $\check{X}$ and $\check{Y}$ meet before either of them leaves $\States_1$. As they are independent, at each transition the probability that both chains will jump to $\States_1$ is $\gamma^2$. Further, by Lemma~\ref{StayInLayerOne}, for as long as the chains remain in $\States_1$, they jump following transition matrix $A$. Hence,
$$\Prob_{\check{x}\check{y}} (K_{\check{T}} = 0) = \sum_{n=1}^{\infty} q(x,y;n) \gamma^{2n} \leq \gamma^2.$$
Now let $k \geq 1$. Each time the chains both arrive in $\States_1$ when they were not both in $\States_1$ previously, they will either meet before one of them leaves $\States_1$, or they will not. Considering these as successes and failures, a geometric trials argument yields
\begin{align*}
\Prob_{\check{x}\check{y}} (K_{\check{T}} = k) &= \E_{\check{x}\check{y}} \Bigg[ \Bigg( \sum_{n=1}^{\infty} q(\check{X}_{t_k},\check{Y}_{t_k};n) \gamma^{2n} \Bigg) \prod_{i=0}^{k-1} \Bigg( 1 - \sum_{n=1}^{\infty} q(\check{X}_{t_i},\check{Y}_{t_i};n) \gamma^{2n} \Bigg) \Bigg] \\
&\leq \gamma^2 (1 - q_{\gamma})^k,
\end{align*}
where, in the notation of Definition~\ref{DefnSplit}, we define $q(\check{x}_1,\check{y}_1;n) = q(x,y;n)$ for notational simplicity. Now suppose that either $\check{x} \in \States_0$ or $\check{y} \in \States_0$. Clearly the chains must first both arrive in $\States_1$ before they can meet in $\States_1$, so $\Prob_{\check{x}\check{y}} (K_{\check{T}} = 0) = 0$. For $k \geq 1$, we have
\begin{align*}
\Prob_{\check{x}\check{y}} (K_{\check{T}} = k) &= \E_{\check{x}\check{y}} \Bigg[ \Bigg( \sum_{n=1}^{\infty} q(\check{X}_{t_k},\check{Y}_{t_k};n) \gamma^{2n} \Bigg) \prod_{i=1}^{k-1} \Bigg( 1 - \sum_{n=1}^{\infty} q(\check{X}_{t_i},\check{Y}_{t_i};n) \gamma^{2n} \Bigg) \Bigg] \\
&\leq \gamma^2 (1 - q_{\gamma})^{k-1}.
\end{align*}
Putting this together, we deduce the result.
\end{proof}

\begin{mylemma}\label{BoundOnMGF}
Let $q_{\gamma}$ be as in the previous lemma. Then there exists $\beta > 0$ such that, for any $\check{x},\check{y} \in \check{\States}$ and any $k \geq 0$,
$$\E_{\check{x}\check{y}} \big[ e^{2\beta t_k} \big] \leq \left( \frac{1}{2} \left( 1 + \frac{1}{1 - q_{\gamma}} \right) \right)^k.$$
Further, this $\beta$ only depends on $\gamma$ and $q_{\gamma}$.
\end{mylemma}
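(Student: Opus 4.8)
The plan is to exploit the special structure of the Nummelin splitting. By construction of $\mathcal{B} = [\,\check C \mid \check A\,]$ and the splitting map $\check\Pi$, at every transition the \emph{layer} into which a chain lands (that is, whether the new state lies in $\States_0$ or in $\States_1$) is chosen to be $\States_1$ with probability $\gamma$ and $\States_0$ with probability $1-\gamma$, independently of the position in $\States$ and of the current layer. Consequently, writing $B_t := \Ind_{\{\check X_t,\check Y_t \in \States_1\}}$ and using the independence of $\check X$ and $\check Y$, the variables $(B_t)_{t \geq 1}$ form an i.i.d.\ sequence of Bernoulli random variables with parameter $\gamma^2$, independent of the starting states $\check x,\check y$. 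The times $t_k$ are then exactly the successive \emph{rising edges} of $B$, i.e.\ the times $t$ with $B_{t-1}=0$ and $B_t=1$.

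First I would set up the associated renewal decomposition. By the strong Markov property applied at the stopping time $t_{k-1}$ together with the i.i.d.\ structure above, once a rising edge has occurred at $t_{k-1}$ (so $B_{t_{k-1}}=1$), the gap $\tau_k := t_k - t_{k-1}$ until the next rising edge splits as $\tau_k = N_0 + N_1$, where $N_0 \geq 1$ is the number of steps until $B$ first returns to $0$ (geometric with success probability $1-\gamma^2$) and $N_1 \geq 1$ is the subsequent number of steps until $B$ next equals $1$ (geometric with success probability $\gamma^2$), these being independent. The gaps $\tau_k$ are i.i.d.\ across $k$, with a common law depending only on $\gamma$; moreover, whatever the starting states, $t_1$ is stochastically dominated by $N_0+N_1$ (it has exactly this law when $\check x,\check y \in \States_1$, and is smaller otherwise). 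Hence $\E_{\check x\check y}\big[e^{2\beta t_k}\big] \leq \big(\E\big[e^{2\beta \tau}\big]\big)^k$ uniformly in $\check x,\check y$, where $\tau$ has the law of $N_0+N_1$.

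It then remains only to choose $\beta$. Since $\tau$ is a sum of two independent geometric variables, the map $\beta \mapsto \E\big[e^{2\beta\tau}\big]$ is finite in a neighbourhood of $0$, continuous there, and equals $1$ at $\beta=0$. As $q_\gamma \in (0,1)$ by Lemma~\ref{BoundOnProb}, we have $\tfrac12\big(1 + \tfrac{1}{1-q_\gamma}\big) > 1$, so by continuity there exists $\beta>0$ with
$$\E\big[e^{2\beta\tau}\big] \leq \tfrac12\Big(1 + \tfrac{1}{1-q_\gamma}\Big).$$
This $\beta$ depends only on the law of $\tau$ (hence on $\gamma$) and on $q_\gamma$, as required; and raising the inequality to the $k$\textsuperscript{th} power yields the claimed bound simultaneously for every $k \geq 0$.

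The only delicate point is justifying the renewal decomposition rigorously: one must verify that the layer indicators really are i.i.d.\ Bernoulli$(\gamma)$ and independent of the position coordinate (which follows directly from the form of $\check\Pi$ and $\mathcal B$, cf.\ Remark~\ref{MeasuresAndVectors} and Lemma~\ref{StayInLayerOne}), and that the gaps between rising edges are genuinely i.i.d.\ and independent of $t_1$ (which follows from the strong Markov property at the times $t_{k-1}$). Once this structure is in place the estimate is routine; I would not expect the explicit geometric moment generating function computation to be needed beyond confirming finiteness near $\beta=0$, since the soft continuity argument suffices and makes transparent that the target constant, being any number strictly greater than $1$, is merely a convenient choice tailored to the summation in Lemma~\ref{BoundforExpMoment}.
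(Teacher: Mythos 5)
Your proposal is correct and follows essentially the same route as the paper: both decompose the increments $t_k - t_{k-1}$ as sums of independent $\mathrm{Geom}(1-\gamma^2)$ and $\mathrm{Geom}(\gamma^2)$ variables (handling $t_1$ by stochastic domination when a chain starts in $\States_0$), use independence of the increments to get the $k$\textsuperscript{th}-power bound, and then choose $\beta$ small enough that the single-increment moment generating function is at most $\tfrac12\big(1+\tfrac{1}{1-q_\gamma}\big)$. The only cosmetic differences are that you justify the renewal structure via the i.i.d.\ Bernoulli$(\gamma)$ layer indicators rather than the strong Markov property, and you select $\beta$ by continuity of the moment generating function at $0$ where the paper invokes Lemma~\ref{ImproveBound}; both are valid.
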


\begin{proof}
It will be useful to consider two independent geometric random variables
$$Q_1 \sim \text{Geom}(1 - \gamma^2) \ \ \ \ \text{and} \ \ \ \ Q_2 \sim \text{Geom}(\gamma^2),$$
where we adopt the definition of the probability mass function of $Q \sim \text{Geom}(p)$ as being given by $\Prob (Q=k) = (1-p)^{k-1}p$ for $k \geq 1$.

If $\check{x},\check{y} \in \States_1$ then $t_1$ is the time taken for either of the chains to leave $\States_1$, plus the time taken for both chains to return to $\States_1$. It follows from an application of the strong Markov property that these times are independent. We therefore have that $t_1 \stackrel{\text{d}}{=} Q_1 + Q_2$. If either $\check{x} \in \States_0$ or $\check{y} \in \States_0$ then $t_1$ is just the time taken for both chains to arrive in $\States_1$, so in this case $t_1 \stackrel{\text{d}}{=} Q_2$. Similarly, for $k \geq 2$, $t_k - t_{k-1}$ is the time taken for either of the chains to leave $\States_1$, plus the time taken for both chains to return to $\States_1$, and these times are independent, so it follows that $t_k - t_{k-1} \stackrel{\text{d}}{=} Q_1 + Q_2$. Hence, for all $k \geq 1$,
$$\E_{\check{x}\check{y}} \big[ e^{2\beta (t_k - t_{k-1})} \big] \leq \E \big[ e^{2\beta (Q_1 + Q_2)} \big].$$
By Lemma~\ref{ImproveBound}, we can make a new choice of $\beta > 0$, which depends only on $\gamma$ and $q_{\gamma}$, such that
$$\E \big[ e^{2\beta (Q_1 + Q_2)} \big] \leq \frac{1}{2} \bigg( 1 + \frac{1}{1 - q_{\gamma}} \bigg).$$

It follows from another application of the strong Markov property that $t_k - t_{k-1}$ is independent of $t_1, \ldots, t_{k-1}$ for each $k$. Then, for $k \geq 1$,
$$\E_{\check{x}\check{y}} \big[ e^{2\beta t_k} \big] = \prod_{j=1}^k \E_{\check{x}\check{y}} \big[ e^{2\beta (t_j - t_{j-1})} \big] \leq \E \big[ e^{2\beta (Q_1 + Q_2)} \big]^k \leq \bigg( \frac{1}{2} \bigg( 1 + \frac{1}{1 - q_{\gamma}} \bigg) \bigg)^k.$$
\end{proof}

\begin{proof}[Proof of Theorem~\ref{ExpMoments}.]
Substituting the bounds given by Lemmas~\ref{BoundOnProb} and \ref{BoundOnMGF} into (\ref{eq:BoundExpMoment}), we obtain
\begin{align*}
\E_{\check{x}\check{y}} \big[ e^{\beta \check{T}} \big] &\leq G^{\ast}(2\beta)^{\frac{1}{2}} \sum_{k=0}^{\infty} \left[ \left( \frac{1}{2} \left( 1 + \frac{1}{1 - q_{\gamma}} \right) \right)^k \cdot \gamma^2 (1 - q_{\gamma})^{k-1} \right]^{\frac{1}{2}} \\
&= \gamma \left( \frac{G^{\ast}(2\beta)}{1 - q_{\gamma}} \right)^{\frac{1}{2}} \left( 1 - \left( 1 - \frac{q_{\gamma}}{2} \right)^{\frac{1}{2}} \right)^{-1}.
\end{align*}
This is valid provided that $\beta$ is chosen to be at least as small as the $\beta$ given in Lemma~\ref{BoundOnMGF}. The expression above is finite provided that $\beta$ is also sufficiently small to ensure that $G^{\ast}(2\beta)$ is finite, which can be guaranteed by Proposition~\ref{UniErgEquivalence}. Note that this choice of $\beta$ depends only on $A$, $\gamma$ and $q_{\gamma}$.

With this $\beta$, we have
$$H^{\ast}(\beta) = \sup_{\check{x}, \check{y} \in \check{\States}} \E_{\check{x}\check{y}} \big[ e^{\beta \check{S}} \big] \leq \sup_{\check{x}, \check{y} \in \check{\States}} \E_{\check{x}\check{y}} \big[ e^{\beta \check{T}} \big] < \infty.$$
By Lemma~\ref{ImproveBound}, for any $\epsilon > 0$ there exists $\tilde{\beta} > 0$ such that $H^{\ast}(\beta) \leq 1 + \epsilon$ for all $\beta \in \big[ 0,\tilde{\beta} \big]$. Finally, we see that $\tilde{\beta}$ depends only on $A$, $\gamma$, $\epsilon$, and on $q_{\gamma}$, which is itself a function of $A$ and $\gamma$. In particular, $\tilde{\beta}$ does not depend on $B$ except through $A$ and $\gamma$.
\end{proof}

\begin{mycor}\label{BoundUnsplitChains}
Let $X$ and $Y$ be two independent copies of the Markov chain on $\States$ with transition matrix $B$. Let $T = \inf \{t \geq 0 : X_t = Y_t\}$ be the first meeting time of these chains. Then, for any $\epsilon > 0$, there exists $\tilde{\beta} > 0$ such that
$$\sup_{x,y \in \States} \E_{xy} \big[ e^{\beta T} \big] \leq 1 + \epsilon \ \ \ \ \text{for all} \ \ \ \ \beta \in \big[ 0,\tilde{\beta} \big],$$
where, as usual, $\E_{xy}$ is the expectation conditional on $X_0 = x$ and $Y_0 = y$. Furthermore, $\tilde{\beta}$ does not depend on $B$ except through $A$ and $\gamma$.
\end{mycor}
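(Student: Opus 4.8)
The plan is to deduce the corollary from Theorem~\ref{ExpMoments} by realizing the unsplit chains $X,Y$ as projections of the split chains $\check{X},\check{Y}$ and comparing their meeting times. First I would set up the coupling. Let $\pi:\check{\States}\to\States$ denote the projection forgetting the layer, so that $\pi(\check{y}_0)=\pi(\check{y}_1)=y$. Starting a single split chain from the balanced initial law $\check{\Pi}(\delta_x)$ (in the notation of Definition~\ref{DefnSplit}), the intertwining identity of Lemma~\ref{CommutePiwithB} shows, exactly as in the proof of Proposition~\ref{MarginalDistributions}, that the law of $\check{X}_t$ remains balanced for all $t$ and that $\pi(\check{X}_t)$ is distributed as $B^t\delta_x$. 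In fact, since at each step the layer of $\check{X}_t$ is refreshed to a $\mathrm{Bernoulli}(\gamma)$ variable given its projected position, the projection $\pi(\check{X})$ is a Markov chain on $\States$ with transition matrix $B$ started from $x$. Running two independent split chains $\check{X},\check{Y}$ from $\check{\Pi}(\delta_x)$ and $\check{\Pi}(\delta_y)$ therefore yields, after projection, precisely two independent copies of the $B$-chain started from $x$ and $y$, as in the statement.

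The comparison of meeting times is then immediate: whenever $\check{X}_t=\check{Y}_t$ we also have $\pi(\check{X}_t)=\pi(\check{Y}_t)$, so the unsplit meeting time $T$ satisfies $T\leq\check{S}$ pathwise under this coupling. Hence $e^{\beta T}\leq e^{\beta\check{S}}$ for every $\beta\geq 0$, and taking expectations gives $\E_{xy}\big[e^{\beta T}\big]\leq \E\big[e^{\beta\check{S}}\big]$, where on the right the split chains start from $\check{\Pi}(\delta_x)$ and $\check{\Pi}(\delta_y)$.

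It then remains to control the right-hand side by the uniform bound $H^{\ast}(\beta)$ of Theorem~\ref{ExpMoments}, which is a supremum over \emph{deterministic} split starting states. Since $\check{\Pi}(\delta_x)=(1-\gamma)\delta_{\check{x}_0}+\gamma\delta_{\check{x}_1}$ and likewise for $y$, and the two chains are started independently, conditioning on the initial layers expresses $\E\big[e^{\beta\check{S}}\big]$ as a convex combination of the quantities $\E_{\check{x}_a\check{y}_b}\big[e^{\beta\check{S}}\big]$ over $a,b\in\{0,1\}$, each of which is at most $H^{\ast}(\beta)$. Thus $\E_{xy}\big[e^{\beta T}\big]\leq H^{\ast}(\beta)\leq 1+\epsilon$ for all $\beta\in\big[0,\tilde{\beta}\big]$, and taking the supremum over $x,y\in\States$ gives the claim; the dependence of $\tilde{\beta}$ on $A,\gamma,\epsilon$ alone is inherited directly from Theorem~\ref{ExpMoments}.

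I expect the only genuinely delicate point to be the first step, namely verifying that the projected pair really is distributed as two independent $B$-chains rather than merely sharing its one-dimensional marginals with them (which is all that Proposition~\ref{MarginalDistributions} asserts verbatim). Everything else reduces to the pathwise domination $T\leq\check{S}$ and a convex-combination bound, with all of the analytic work already carried out in Theorem~\ref{ExpMoments}.
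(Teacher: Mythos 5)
Your proposal is correct and follows essentially the same route as the paper: realize the unsplit chains as projections of the split chains, dominate the unsplit meeting time $T$ pathwise by $\check{S}$, and invoke Theorem~\ref{ExpMoments}. In fact your version is more careful than the paper's own two-sentence proof, which appeals only to the equality of one-dimensional marginals from Proposition~\ref{MarginalDistributions}; your point that one must check the projected \emph{pair} is jointly distributed as two independent $B$-chains (true because the layer is refreshed to an independent $\mathrm{Bernoulli}(\gamma)$ at each step, so averaging $C$ and $A$ with weights $1-\gamma$ and $\gamma$ recovers $B$), together with the convex-combination step handling the balanced initial laws, fills in details the paper leaves implicit.
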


\begin{proof}
By Proposition~\ref{MarginalDistributions}, $X$ has the same marginal distribution as $\check{X}$, ignoring the splitting, and similarly for $Y$. Hence $T$, the first meeting time of $X$ and $Y$, is less than or equal (in distribution) to $\check{S}$, the first meeting time of $\check{X}$ and $\check{Y}$, which we showed has the required bound in Theorem~\ref{ExpMoments}.
\end{proof}

\begin{proof}[Proof of Theorem~\ref{BisUniformlyErgodic}.]
Let $X$ and $Y$ be two independent copies of the Markov chain on $\States$ with transition matrix $B$. Let $T = \inf \{t \geq 0 : X_t = Y_t\}$ be the first meeting time of these chains, as in the previous Corollary. Let $\mu, \nu \in \mathcal{M}$ (recall Definition~\ref{DefnMeasuresTVnorm}). Write $P_t$ for the transition operator of $X$ (and $Y$), so that $P_t \mu$ is the law of $X_t$ given $X_0 \sim \mu$, and $P_t \nu$ is the law of $Y_t$ given $Y_0 \sim \nu$. It is easy to see that, conditioned on $\{T \leq t\}$, $X_t$ and $Y_t$ have the same distribution. It follows that
$$\| P_t \mu - P_t \nu \| _{TV} \leq \Prob(T > t \, | \, X_0 \sim \mu, Y_0 \sim \nu).$$
By Corollary~\ref{BoundUnsplitChains}, for any $\epsilon > 0$, there exists $\tilde{\beta} > 0$, which depends only on $A$, $\gamma$ and $\epsilon$, such that
$$\sup_{x,y \in \States} \E_{xy} \big[ e^{\tilde{\beta} T} \big] \leq 1 + \epsilon.$$
Then, by Markov's inequality,
\begin{align}
\| P_t \mu - P_t \nu \|_{TV} &\leq \Prob(T > t \, | \, X_0 \sim \mu, Y_0 \sim \nu) \nonumber \\
&\leq \E \big[ e^{\tilde{\beta} T} \, \big| \, X_0 \sim \mu, Y_0 \sim \nu \big] e^{-\tilde{\beta} t} \leq (1 + \epsilon) e^{-\tilde{\beta} t}. \label{eq:TVconverge}
\end{align}
Let $s > t$. Replacing $\nu$ by $P_{s-t} \mu$ in the above, we obtain
$$\| P_t \mu - P_s \mu \|_{TV} \leq (1 + \epsilon) e^{-\tilde{\beta} t}.$$
Since this holds for all $s > t$, we see that $\{P_t \mu\}_{t \geq 0}$ is a Cauchy sequence in $\mathcal{M}$, and hence converges to an element $\pi \in \mathcal{M}$, and from (\ref{eq:TVconverge}), $\pi$ is independent of the initial measure $\mu$. Taking the limit as $s \rightarrow \infty$ in the above, it follows that
$$\sup_{\mu \in \mathcal{M}} \| P_t \mu - \pi \|_{TV} \leq (1 + \epsilon) e^{-\tilde{\beta} t}.$$
Hence, a Markov chain under the measure induced by $B$ is uniformly ergodic. Further, as noted above, the rate of convergence $\tilde{\beta}$ does not depend on $B$ except through $A$ and $\gamma$, and we may take $\epsilon$ arbitrarily small, with a corresponding decrease in $\tilde{\beta}$.
\end{proof}

\section{Ergodic BSDEs}

We now introduce an `Ergodic BSDE' (EBSDE) as an infinite horizon equation of the form
\begin{equation}\label{eq:ErgodicBSDE}
Y_T = Y_t - \sum_{t \leq u < T} \big( f(X_u,Z_u) - \lambda \big) + \sum_{t \leq u < T} Z_u^{\ast}M_{u+1}.
\end{equation}
A solution of this equation is a triple $(Y,Z,\lambda)$ such that (\ref{eq:ErgodicBSDE}) holds a.s. for all finite values of $t$ and $T$ such that $0 \leq t < T$, where, as usual $Y$ and $Z$ are adapted processes of appropriate dimension, and where $\lambda \in \Reals$ is a constant. Note that, unlike in our discounted BSDEs where the constant $\alpha$ is given, here $\lambda$ is to be found as part of the solution.

We will follow the method given in Section 4 of \cite{CohenHu2013}, which is itself based on the work of \cite{FuhrmanHuTessitore2009} and \cite{DebusscheHuTessitore2011}. Throughout this section we will assume that $X$ is a uniformly ergodic, time-homogeneous Markov chain on $\States$ with transition matrix $A$, and that the driver $f : \States \times \Reals^N \rightarrow \Reals$ is $\gamma$-balanced, Markovian, and independent of $y$ and $t$. Under these assumptions, we shall prove a result on the existence and uniqueness of solutions to the EBSDE (\ref{eq:ErgodicBSDE}).

\begin{mylemma}\label{XisUniErgUnderQ}
Under the assumptions stated above, let $Z,Z'$ be any two deterministic, or Markovian, $\Reals^N$-valued processes which do not depend on $t$, defined up to equivalence $\sim_M$. Then there exists a probability measure $\Q$ on $(\Omega,\F)$ such that the following conditions hold:
\begin{itemize}
\item $\Q$ is equivalent to $\Prob$ on $\F_t$ for every finite $t$,
\item $\tilde{M}_t := -\sum_{0 \leq u < t} \big(f(X_u,Z_u) - f(X_u,Z'_u)\big) + \sum_{0 \leq u < t} (Z_u - Z'_u)^{\ast}M_{u+1}$ is a martingale under $\Q$, and
\item under $\Q$, $X$ is a uniformly ergodic Markov chain, and the constants of ergodicity $R,\rho$ depend only on $A$ and $\gamma$.
\end{itemize}
\end{mylemma}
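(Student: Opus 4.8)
The plan is to construct the measure $\Q$ by directly exhibiting the transition matrix under which $X$ evolves, and then to verify that this transition matrix is $\gamma$-controlled by $A$ so that Theorem~\ref{BisUniformlyErgodic} applies. First I would use the $\gamma$-balanced structure of $f$. Since $Z,Z'$ are Markovian and $t$-independent, at each state $e_j \in \States$ the difference $f(e_j,Z(e_j)) - f(e_j,Z'(e_j))$ can be written, via property (i) of Definition~\ref{DefnGammaBalanced}, in terms of a stochastic vector $\psi = \psi(e_j,Z,Z')$. This vector $\psi$ is exactly the object that Proposition~\ref{ChangeMeasure} uses to reassign the transition probabilities: on the atom where $X_t = e_j$, the measure $\Q$ should make $X_{t+1}$ jump according to the law $\psi$. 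Because $\psi$ depends on $\omega$ only through $X_t$ and is $t$-independent (as $f$, $Z$, $Z'$ are all $t$-independent), the resulting $\Q$ makes $X$ a \emph{time-homogeneous} Markov chain under $\Q$, with some transition matrix $B$ whose $j$\textsuperscript{th} column is $\psi(e_j,Z(e_j),Z'(e_j))$.

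The first two bullet points then follow from the machinery already in place. I would invoke Lemma~\ref{GammaBalPropMeasure} to confirm that a $\gamma$-balanced $f$ satisfies the conditions of Proposition~\ref{ChangeMeasure}, and then apply Proposition~\ref{ChangeMeasure} directly with the given $Y$ (here taken as any fixed adapted process, or omitted since $f$ is independent of $y$) to obtain a measure $\Q$, equivalent to $\Prob$ on each $\F_t$, under which $\tilde{M}$ is a martingale. This settles the first two bullets immediately. The key additional observation I need is that the particular $\Q$ so constructed (with the natural homogeneous choice of $\mu_i^F = \psi_i$) makes $X$ Markov with the homogeneous transition matrix $B$ described above.

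The heart of the argument — and the step I expect to be the main obstacle — is showing that $B \succeq_\gamma A$, i.e.\ that $B \geq \gamma A$ componentwise, so that the uniform ergodicity estimates of Theorem~\ref{BisUniformlyErgodic} transfer to $\Q$ with constants depending only on $A$ and $\gamma$. This is precisely where property (ii) of Definition~\ref{DefnGammaBalanced} is used: the $i$\textsuperscript{th} entry $\psi_i$ of the transition vector out of state $e_j$ satisfies $\psi_i / (e_i^{\ast} A e_j) \in [\gamma,\gamma^{-1}]$, with the convention $0/0 := 1$. In particular $\psi_i \geq \gamma \, (e_i^{\ast} A e_j)$ for every $i,j$, which is exactly the componentwise inequality $B \geq \gamma A$. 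Property (iii), that $\mathbf{1}^\ast\psi = 1$, guarantees that each column of $B$ is a genuine probability vector, so $B$ is indeed a transition matrix.

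Having established $B \succeq_\gamma A$, the third bullet follows directly from Theorem~\ref{BisUniformlyErgodic}: since $X$ is uniformly ergodic under $A$ by hypothesis and $B$ is $\gamma$-controlled by $A$, the chain $X$ is uniformly ergodic under the measure induced by $B$ — that is, under $\Q$ — with constants of ergodicity $R,\rho$ that depend only on $A$ and $\gamma$, and not on the particular choice of $Z,Z'$. This last point is the payoff of the uniform-in-$B$ nature of Theorem~\ref{BisUniformlyErgodic}, and it is what will allow the estimates to be applied uniformly across the family of measures arising in the forthcoming fixed-point argument for the EBSDE. One technical point to verify carefully is that the equivalence class of $Z,Z'$ under $\sim_M$ does not affect $B$: adding a multiple of $\mathbf{1}$ to $Z_t$ (which is the ambiguity permitted by $\sim_{M_{t+1}}$) leaves $(Z_t - Z'_t)^\ast(e_i - \E[X_{t+1}\,|\,\F_t])$ unchanged, so $\psi$, and hence $B$, is well defined on equivalence classes.
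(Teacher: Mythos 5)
Your proposal is correct and follows essentially the same route as the paper: both identify the deterministic transition matrix whose columns are the stochastic vectors $\psi(e_j,Z,Z')$ coming from the $\gamma$-balanced structure, take $\Q$ to be the law making $X$ a time-homogeneous Markov chain with that matrix (which gives equivalence on each $\F_t$ and the martingale property), and invoke Theorem~\ref{BisUniformlyErgodic} via $\gamma$-control by $A$ for the uniform ergodicity constants.
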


\begin{proof}
Let $\psi(\omega,t,z,z')$ be the random field associated with $f$, as given in Definition~\ref{DefnGammaBalanced}. By our assumptions on $f$ and the processes $Z,Z'$, there exists a (deterministic) matrix $\Psi^{Z,Z'}$ such that
$$\psi(\omega,t,Z_t,Z'_t) = \Psi^{Z,Z'}X_t$$
holds for all $t$. It follows from the properties of $\psi$ given in Definition~\ref{DefnGammaBalanced} that $\Psi^{Z,Z'}$ is a transition matrix, and that $\Psi^{Z,Z'} \sim_{\gamma} A$.

Let $\Q$ be the probability measure under which $X$ is a Markov chain with transition matrix $\Psi^{Z,Z'}$. Since $\Psi^{Z,Z'} \sim_{\gamma} A$, the matrices $\Psi^{Z,Z'}$ and $A$ have the same pattern of zero entries, so that the possible jumps of $X$ are the same under $\Q$ and $\Prob$. As we are in a discrete time, finite state setting, it follows immediately that $\Q$ is equivalent to $\Prob$ on $\F_t$ for every finite $t$. We also have that
\begin{align*}
\E_{\Q}\big[(Z_t - Z'_t)^{\ast}M_{t+1} \, \big| \, \F_t \big] &= (Z_t-Z'_t)^{\ast}\big(\Psi^{Z,Z'}X_t - AX_t\big) \\
&= f(X_t,Z_t) - f(X_t,Z'_t),
\end{align*}
from which it follows that $\E_{\Q}\big[\tilde{M}_{t+1} - \tilde{M}_t \, \big| \, \F_t \big] = 0$, so that $\tilde{M}$ is a martingale under $\Q$ as required. The final statement is the result of Theorem~\ref{BisUniformlyErgodic}.
\end{proof}

\begin{myremark}
As we saw in Lemma~\ref{MarkovianSolnsDisc} and Corollary~\ref{vIndepoft}, the $Z$ part of the solution of our discounted BSDE is constant (i.e. deterministic and independent of $t$), though the result above holds just as well for any Markovian processes $Z,Z'$, provided they are still independent of $t$. If they do depend on $t$ then $X$ will in general be a time-inhomogeneous Markov chain under $\Q$, in which case the notion of uniform ergodicity is no longer meaningful. Nevertheless, in an analogous context, \cite{Cohen2014} uses the continuous time version of the ergodicity estimates we obtained in Section 3 (as given in \cite{CohenHu2013}) to prove the existence of solutions of continuous time BSDEs up to unbounded stopping times.
\end{myremark}

\begin{mylemma}\label{ConvergeAlphan}
For $\alpha > 0$, let $(Y^{\alpha},Z^{\alpha})$ be the unique bounded solution (as given in Theorem~\ref{SolnDiscountedBSDE}) of the discounted BSDE
\begin{equation}\label{eq:DiscountedAlpha}
Y^{\alpha}_T = Y^{\alpha}_t - \sum_{t \leq u < T} \big( f(X_u,Z^{\alpha}_u) - \alpha Y^{\alpha}_u \big) + \sum_{t \leq u < T} \big(Z^{\alpha}_u\big)^{\ast}M_{u+1},
\end{equation}
and let $x_0 \in \States$ be an arbitrary state. By Lemma~\ref{MarkovianSolnsDisc} and Corollary~\ref{vIndepoft}, there exists a function $v^{\alpha} : \States \rightarrow \Reals$ such that $Y^{\alpha}_t = v^{\alpha}(X_t)$ and $e_k^{\ast}Z^{\alpha}_t = v^{\alpha}(e_k)$. Then there exists a bound $C' > 0$ such that
\begin{equation}\label{eq:BoundOnvalpha}
\big|v^{\alpha}(x) - v^{\alpha}(x_0)\big| \leq C', \ \ \ \ \ \alpha|v^{\alpha}(x)| \leq C'
\end{equation}
uniformly in $x$ and $\alpha$, and hence there exists a sequence $\alpha_n \searrow 0$ such that
$$\big(v^{\alpha_n}(x) - v^{\alpha_n}(x_0)\big) \rightarrow v(x) \ \ \ \ \text{and} \ \ \ \ \alpha_nv^{\alpha_n}(x) \rightarrow \lambda \ \ \ \ \text{for all} \ \ \ x \in \States,$$
for some $\lambda \in \Reals$ and some function $v : \States \rightarrow \Reals$.
\end{mylemma}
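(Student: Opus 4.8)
The plan is to obtain a Feynman--Kac-type representation for $v^{\alpha}$ under a suitable change of measure, and then to exploit the \emph{uniform} ergodicity estimate of Theorem~\ref{BisUniformlyErgodic} to control the oscillation $v^{\alpha}(x) - v^{\alpha}(x_0)$ independently of $\alpha$. This mirrors the continuous-time argument of \cite{CohenHu2013}.

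First I would apply Lemma~\ref{XisUniErgUnderQ} with $Z = Z^{\alpha}$ and $Z' = 0$, both of which are deterministic and independent of $t$ by Lemma~\ref{MarkovianSolnsDisc} and Corollary~\ref{vIndepoft}. This produces a measure $\Q^{\alpha}$ under which $X$ is a Markov chain with transition matrix $\Psi^{Z^{\alpha},0} \sim_{\gamma} A$, and under which
$$\E_{\Q^{\alpha}}\big[ (Z^{\alpha}_t)^{\ast} M_{t+1} \,\big|\, \F_t \big] = f(X_t, Z^{\alpha}_t) - f(X_t, 0).$$
Taking $\E_{\Q^{\alpha}}[\,\cdot\,|\,\F_t]$ in the one-step dynamics of (\ref{eq:DiscountedAlpha}) then cancels the $f(X_t, Z^{\alpha}_t)$ term and yields $(1+\alpha) Y^{\alpha}_t = \E_{\Q^{\alpha}}[Y^{\alpha}_{t+1} \,|\, \F_t] + f(X_t, 0)$. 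Iterating this identity, and using that $|Y^{\alpha}_r| \le C/\alpha$ so the boundary term $(1+\alpha)^{-(r-t)}\E_{\Q^{\alpha}}[Y^{\alpha}_r|\F_t]$ vanishes as $r \to \infty$, I arrive at the representation
$$v^{\alpha}(x) = \sum_{k=0}^{\infty} (1+\alpha)^{-(k+1)} \E_{\Q^{\alpha}}^x\big[ g(X_k) \big], \qquad g(\,\cdot\,) := \tilde{f}(\,\cdot\,, 0),$$
where $\E_{\Q^{\alpha}}^x$ is expectation with $X_0 = x$, and $\|g\|_{\infty} \le C$.

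The decisive step is to bound the oscillation. Writing $P^{\alpha}_k$ for the $k$-step transition operator under $\Q^{\alpha}$ and $\pi^{\alpha}$ for the associated invariant measure, the $k$-th summand of $v^{\alpha}(x) - v^{\alpha}(x_0)$ is $(1+\alpha)^{-(k+1)}$ times $\sum_y g(y)\big(P^{\alpha}_k \delta_x - P^{\alpha}_k \delta_{x_0}\big)(\{y\})$, whose modulus is at most $2\|g\|_{\infty}\,\|P^{\alpha}_k \delta_x - P^{\alpha}_k \delta_{x_0}\|_{TV}$. Since $\Psi^{Z^{\alpha},0} \succeq_{\gamma} A$, Theorem~\ref{BisUniformlyErgodic} furnishes constants $R,\rho > 0$ depending only on $A$ and $\gamma$ (crucially \emph{not} on $\alpha$) with $\|P^{\alpha}_k \delta_x - \pi^{\alpha}\|_{TV} \le R e^{-\rho k}$, so the triangle inequality bounds the $k$-th term by $4CR e^{-\rho k}$. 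As $(1+\alpha)^{-(k+1)} \le 1$, summing the geometric series gives $|v^{\alpha}(x) - v^{\alpha}(x_0)| \le 4CR/(1 - e^{-\rho})$, uniformly in $x$ and $\alpha$. The second bound $\alpha|v^{\alpha}(x)| \le C$ is immediate from the estimate $|Y^{\alpha}_t| \le C/\alpha$ of Theorem~\ref{SolnDiscountedBSDE}. Taking $C'$ to be the larger of the two constants establishes (\ref{eq:BoundOnvalpha}).

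I expect the uniformity of $R$ and $\rho$ in $\alpha$ to be the crux of the argument: it is exactly this $\alpha$-independence, guaranteed by the $\gamma$-balanced hypothesis (which forces $\Psi^{Z^{\alpha},0} \sim_{\gamma} A$ for every $\alpha$) together with Theorem~\ref{BisUniformlyErgodic}, that keeps the oscillation bound from degenerating as $\alpha \searrow 0$; without the robust Section~3 estimates the geometric sum could blow up. Finally, since $\States$ is finite, the vectors $\big(v^{\alpha}(x) - v^{\alpha}(x_0)\big)_{x \in \States}$ and the scalars $\big(\alpha v^{\alpha}(x)\big)_{x \in \States}$ lie in the compact boxes $[-C',C']^N$ and $[-C',C']^N$, so a Bolzano--Weierstrass (diagonal) argument yields a sequence $\alpha_n \searrow 0$ along which all of them converge, to a function $v$ and to limits $\lambda_x$. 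The estimate $|\alpha v^{\alpha}(x) - \alpha v^{\alpha}(x_0)| \le \alpha C' \to 0$ forces $\lambda_x = \lambda_{x_0} =: \lambda$ for every $x$, delivering the single constant $\lambda$ asserted in the statement.
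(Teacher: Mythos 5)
Your proposal is correct and follows essentially the same route as the paper's proof: the change of measure via Lemma~\ref{XisUniErgUnderQ}, the discounted Feynman--Kac representation of $v^{\alpha}$ in terms of $f(\cdot,0)$, the $\alpha$-uniform ergodicity estimate from Theorem~\ref{BisUniformlyErgodic} to bound the oscillation by a convergent geometric series, and Bolzano--Weierstrass for the final extraction. The only cosmetic difference is that you pass through the invariant measure $\pi^{\alpha}$ via the triangle inequality (costing an extra factor of $2$ in $C'$), whereas the paper uses the two-chain coupling bound $\|P^{\alpha}_k\delta_x - P^{\alpha}_k\delta_{x'}\|_{TV} \leq Re^{-\rho k}$ directly from the proof of Theorem~\ref{BisUniformlyErgodic}.
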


\begin{proof}
Let $C > 0$ be a bound on $|f(\cdot,0)|$. It then follows from Theorem~\ref{SolnDiscountedBSDE} that $|v^{\alpha}(\cdot)| \leq C/\alpha$. By Lemma~\ref{XisUniErgUnderQ}, there exists a measure $\Q^{\alpha}$ such that
$$-\sum_{0 \leq u < t} \big(f(X_u,Z^{\alpha}_u) - f(X_u,0)\big) + \sum_{0 \leq u < t} \big(Z^{\alpha}_u\big)^{\ast}M_{u+1}$$
is a martingale under $\Q^{\alpha}$. Moreover, $X$ is uniformly ergodic under $\Q^{\alpha}$, and the constants of ergodicity $R,\rho$ do not depend on $\alpha$. Writing $P^{\alpha}_t\delta_x$ for the law of $X_t$ under $\Q^{\alpha}$ given $X_0 = x$, we see from the proof of Theorem~\ref{BisUniformlyErgodic} that
$$\big\|P^{\alpha}_t\delta_x - P^{\alpha}_t\delta_{x'}\big\|_{TV} \leq Re^{-\rho t}$$
for all $x,x' \in \States$.
From the one-step dynamics of (\ref{eq:DiscountedAlpha}), we obtain
$$\E_{\Q^{\alpha}}[v^{\alpha}(X_{t+1}) \, | \, \F_t] = (1 + \alpha)v^{\alpha}(X_t) - f(X_t,0).$$
It follows by induction that
$$v^{\alpha}(X_0) = \E_{\Q^{\alpha}}\Bigg[(1 + \alpha)^{-T}v^{\alpha}(X_T) + \sum_{k=0}^{T-1} (1 + \alpha)^{-(k+1)}f(X_k,0) \ \Bigg| \ \F_0\Bigg]$$
for all $T \geq 1$. As $|v^{\alpha}(\cdot)| \leq C/\alpha$, letting $T \rightarrow \infty$, we deduce that
$$v^{\alpha}(x) = \lim_{T \rightarrow \infty} \E_{\Q^{\alpha}}\Bigg[\sum_{k=0}^{T-1} (1 + \alpha)^{-(k+1)}f(X_k,0) \ \Bigg| \ X_0 = x\Bigg].$$
Then, for any $x,x' \in \States$, we have
\begin{align*}
\big| v^{\alpha}(x) - v^{\alpha}(x') \big| &= \Bigg| \lim_{T \rightarrow \infty} \E_{\Q^{\alpha}}\Bigg[\sum_{k=0}^{T-1} (1 + \alpha)^{-(k+1)}f(X_k,0) \ \Bigg| \ X_0 = x\Bigg] \\
&\ \ \ \ \ \ - \lim_{T \rightarrow \infty} \E_{\Q^{\alpha}}\Bigg[\sum_{k=0}^{T-1} (1 + \alpha)^{-(k+1)}f(X_k,0) \ \Bigg| \ X_0 = x'\Bigg] \Bigg| \\
&= \lim_{T \rightarrow \infty} \Bigg| \sum_{k=0}^{T-1} (1 + \alpha)^{-(k+1)} \int_{\States} f(X_k,0) \big( d(P^{\alpha}_k\delta_x) - d(P^{\alpha}_k\delta_{x'}) \big) \Bigg| \\
&\leq \lim_{T \rightarrow \infty} \, 2C \sum_{k=0}^{T-1} (1 + \alpha)^{-(k+1)} \big\|P^{\alpha}_k\delta_x - P^{\alpha}_k\delta_{x'}\big\|_{TV} \\
&\leq \lim_{T \rightarrow \infty} \, 2CR \sum_{k=0}^{T-1} (1 + \alpha)^{-(k+1)}e^{-\rho k} = \frac{2CR}{1 + \alpha - e^{-\rho}}.
\end{align*}
We therefore have the bound
$$\big| v^{\alpha}(x) - v^{\alpha}(x') \big| \leq \frac{2CR}{1 - e^{-\rho}},$$
which holds for all $x,x' \in \States$ and all $\alpha > 0$. Since $\alpha|v^{\alpha}(\cdot)| \leq C$ (and we may certainly assume that $R > 1$), we see that (\ref{eq:BoundOnvalpha}) is satisfied with $C' = 2CR(1 - e^{-\rho})^{-1}$.

By the Bolzano--Weierstrass Theorem, there exists a sequence $\alpha_n \searrow 0$ such that $\alpha_nv^{\alpha_n}(x_0) \rightarrow \lambda$ and
$$\big(v^{\alpha_n}(x) - v^{\alpha_n}(x_0)\big) \rightarrow v(x)$$
for all $x \in \States$, for some $\lambda \in \Reals$ and some function $v : \States \rightarrow \Reals$. Note that $|\lambda|$ and $|v(\, \cdot \, )|$ are both bounded by $C'$. Finally, we notice that, for any $x \in \States$,
$$\alpha_nv^{\alpha_n}(x) = \alpha_nv^{\alpha_n}(x_0) + \alpha_n\big(v^{\alpha_n}(x) - v^{\alpha_n}(x_0)\big) \rightarrow \lambda,$$
so the convergence of this sequence to $\lambda$ holds for all $x$.
\end{proof}

We are now in a position to prove existence of solutions to our Ergodic BSDE.

\begin{mythm}\label{SolnErgodicBSDE}
Let $v$ and $\lambda$ be as constructed in Lemma~\ref{ConvergeAlphan}. The triple $(Y,Z,\lambda)$, where
$$Y_t := v(X_t), \ \ \ \ \ \ e_k^{\ast}Z_t := v(e_k),$$
is the unique bounded, stationary (i.e. does not depend on $t$), Markovian solution, with $v(x_0) = 0$, to the Ergodic BSDE
\begin{equation}\label{eq:ErgodicBSDEthm}
Y_T = Y_t - \sum_{t \leq u < T} \big( f(X_u,Z_u) - \lambda \big) + \sum_{t \leq u < T} Z_u^{\ast}M_{u+1}.
\end{equation}
Any other bounded solution $(Y',Z',\lambda')$ satisfies $\lambda = \lambda'$, and any other bounded, stationary, Markovian solution $(Y',Z',\lambda')$ satisfies $Y_t = Y'_t + c$ for some $c \in \Reals$, and $Z \sim_M Z'$.
\end{mythm}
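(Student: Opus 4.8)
The plan is to treat the three claims in turn: existence of the displayed solution, uniqueness of $\lambda$ over all bounded solutions, and uniqueness up to an additive constant over bounded stationary Markovian solutions. For existence I would pass to the limit $\alpha_n \searrow 0$ in the equation characterising $v^\alpha$. By Corollary~\ref{vIndepoft} and equation (\ref{eq:veqn2}), in the stationary case $v^\alpha$ satisfies
$$(1+\alpha)v^\alpha(e_k) - \tilde{f}(e_k,\mathbf{v}^\alpha) - (\mathbf{v}^\alpha)^\ast A e_k = 0$$
on every visited state $e_k$, where $\mathbf{v}^\alpha$ is the vector with entries $v^\alpha(e_k)$. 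The key observation is that a $\gamma$-balanced driver is invariant under shifting its $z$-argument by a multiple of $\mathbf{1}$: applying property (i) of Definition~\ref{DefnGammaBalanced} with arguments $z+c\mathbf{1}$ and $z$, and using $\mathbf{1}^\ast\psi = 1$ (property (iii)) together with $\mathbf{1}^\ast A e_k = 1$, gives $\tilde{f}(e_k,z+c\mathbf{1}) = \tilde{f}(e_k,z)$. Likewise $(\mathbf{v}^\alpha)^\ast A e_k$ changes by exactly $v^\alpha(x_0)$ under the shift $\mathbf{v}^\alpha \mapsto \mathbf{v}^\alpha - v^\alpha(x_0)\mathbf{1}$. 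Writing $w^\alpha := v^\alpha - v^\alpha(x_0)$ and $\mathbf{w}^\alpha$ for the associated vector, the equation collapses to
$$w^\alpha(e_k) + \alpha v^\alpha(e_k) - \tilde{f}(e_k,\mathbf{w}^\alpha) - (\mathbf{w}^\alpha)^\ast A e_k = 0.$$
Since $w^{\alpha_n}(e_k) \to v(e_k)$ and $\alpha_n v^{\alpha_n}(e_k) \to \lambda$ by Lemma~\ref{ConvergeAlphan}, and $\tilde{f}$ is continuous in $z$ (being Lipschitz), I would let $n \to \infty$ to obtain $v(e_k) + \lambda - \tilde{f}(e_k,\mathbf{v}) - \mathbf{v}^\ast A e_k = 0$. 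Computing the one-step increment $Y_{t+1} - Y_t + (f(X_t,Z_t)-\lambda) - Z_t^\ast M_{t+1}$ with $Y_t = v(X_t)$ and $Z_t = \mathbf{v}$ then shows it vanishes on each visited state, hence a.s.; summing recovers (\ref{eq:ErgodicBSDEthm}). Boundedness, stationarity, the Markov property, and $v(x_0)=0$ are all immediate from Lemma~\ref{ConvergeAlphan}.

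For uniqueness of $\lambda$, let $(Y',Z',\lambda')$ be any bounded solution and set $\delta Y = Y - Y'$, $\delta Z = Z - Z'$. Since $f$ is $\gamma$-balanced, Lemma~\ref{GammaBalPropMeasure} lets me invoke Proposition~\ref{ChangeMeasure} to obtain a measure $\Q$, equivalent to $\Prob$ on each $\F_t$, under which $\tilde{M}_t := -\sum_{0\le u<t}(f(X_u,Z_u)-f(X_u,Z'_u)) + \sum_{0\le u<t}\delta Z_u^\ast M_{u+1}$ is a martingale. The one-step dynamics then give $\E_\Q[\delta Y_{t+1}\,|\,\F_t] = \delta Y_t + (\lambda-\lambda')$, and telescoping under $\E_\Q[\,\cdot\,|\,\F_0]$ yields $\E_\Q[\delta Y_t\,|\,\F_0] = \delta Y_0 + t(\lambda-\lambda')$. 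As $\delta Y$ is bounded, the left-hand side is bounded uniformly in $t$, which forces $\lambda = \lambda'$.

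For the final statement, suppose $(Y',Z',\lambda')$ is bounded, stationary and Markovian; then $\lambda'=\lambda$ by the previous step, and $\delta Y_t = h(X_t)$ for the bounded function $h := v - v'$. Here I would apply Lemma~\ref{XisUniErgUnderQ} (valid since $Z,Z'$ are stationary and Markovian) to obtain a measure $\Q$, equivalent to $\Prob$ on each $\F_t$, under which $\tilde{M}$ is a martingale \emph{and} $X$ is uniformly ergodic. The dynamics now read $\delta Y_{t+1} - \delta Y_t = \tilde{M}_{t+1} - \tilde{M}_t$, so $h(X_t)$ is a bounded $\Q$-martingale; equivalently $\E_\Q[h(X_t)\,|\,X_0=x] = h(x)$ for all $t$. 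Letting $t \to \infty$ and using uniform ergodicity of $X$ under $\Q$, the left-hand side converges to $\int_\States h\,d\pi_\Q$ independently of $x$, so $h$ is constant on the visited states and $\delta Y_t \equiv c$ for some $c \in \Reals$, i.e. $Y_t = Y'_t + c$. Substituting $\delta Y \equiv c$ back into the dynamics gives $f(X_t,Z_t) - f(X_t,Z'_t) = \delta Z_t^\ast M_{t+1}$; taking $\E_\Prob[\,\cdot\,|\,\F_t]$ shows the left side vanishes, whence $\delta Z_t^\ast M_{t+1} = 0$ a.s., so $\|\delta Z_t\|_{M_{t+1}} = 0$ and $Z \sim_M Z'$.

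The main obstacle I anticipate is the existence step: everything hinges on recognising the $\mathbf{1}$-invariance of the $\gamma$-balanced driver, which is precisely what decouples $\lambda$ (as the limit of $\alpha v^\alpha(x_0)$) from $v$ and makes the limiting equation well posed. The two uniqueness arguments are then comparatively routine, their only genuine input being that bounded $\Q$-harmonic functions are constant, which is exactly what uniform ergodicity under $\Q$ (Theorem~\ref{BisUniformlyErgodic}, via Lemma~\ref{XisUniErgUnderQ}) supplies.
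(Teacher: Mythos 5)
Your proposal is correct and follows essentially the same route as the paper: the same shift by $v^{\alpha_n}(x_0)\mathbf{1}$ (which a $\gamma$-balanced driver cannot see) in the existence step, the same change of measure via Proposition~\ref{ChangeMeasure} to kill the martingale part and force $\lambda=\lambda'$, and the same appeal to uniform ergodicity under the measure of Lemma~\ref{XisUniErgUnderQ} to conclude that $v-v'$ is constant. The only cosmetic difference is that you pass to the limit in the one-step vector equation (\ref{eq:veqn2}) rather than in the summed BSDE, which is equivalent in this finite-state setting.
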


\begin{proof}
Let $\{\alpha_n\}_{n \geq 1}$ be the sequence constructed in Lemma~\ref{ConvergeAlphan}. We have that $Y^{\alpha_n}_t = v^{\alpha_n}(X_t)$ and $e_k^{\ast}Z^{\alpha_n}_t = v^{\alpha_n}(e_k)$ solve the discounted BSDE
$$Y^{\alpha_n}_T = Y^{\alpha_n}_t - \sum_{t \leq u < T} \big( f(X_u,Z^{\alpha_n}_u) - \alpha_n Y^{\alpha_n}_u \big) + \sum_{t \leq u < T} \big(Z^{\alpha_n}_u\big)^{\ast}M_{u+1}.$$
However, since $\|\mathbf{1}\|_{M_{t+1}} = 0$, and $f$ does not distinguish between values of $Z_u$ up to equivalence $\sim_{M_{u+1}}$, we can equally write $Z^{\alpha_n}_u - v^{\alpha_n}(x_0)\mathbf{1}$ in the place of $Z^{\alpha_n}_u$ in the above. Note that $e_k^{\ast}Z^{\alpha_n}_u - v^{\alpha_n}(x_0) \rightarrow e_k^{\ast}Z_u$ as $n \rightarrow \infty$ for each $e_k \in \States$, and that, by the bound established in Lemma~\ref{ConvergeAlphan}, $|e_k^{\ast}Z^{\alpha_n}_u - v^{\alpha_n}(x_0)|$ is uniformly bounded. Since $f$ is Lipschitz in $z$, we deduce that
$$f(X_u,Z^{\alpha_n}_u - v^{\alpha_n}(x_0)\mathbf{1}) \rightarrow f(X_u,Z_u) \ \ \ \text{as} \ \ \ n \rightarrow \infty \ \ \ \text{a.s.}$$
It follows that
\begin{align*}
Y_T &= \lim_{n \rightarrow \infty} \big(v^{\alpha_n}(X_T) - v^{\alpha_n}(x_0)\big) \\
&= \lim_{n \rightarrow \infty} \big(v^{\alpha_n}(X_t) - v^{\alpha_n}(x_0)\big) - \lim_{n \rightarrow \infty} \sum_{t \leq u < T} \big( f(X_u,Z^{\alpha_n}_u) - \alpha_n v^{\alpha_n}(X_u) \big) \\
&\ \ \ \ \ \ + \lim_{n \rightarrow \infty} \sum_{t \leq u < T} \big(Z^{\alpha_n}_u\big)^{\ast}M_{u+1} \\
&= v(X_t) - \lim_{n \rightarrow \infty} \sum_{t \leq u < T} \big( f(X_u,Z^{\alpha_n}_u - v^{\alpha_n}(x_0)\mathbf{1}) - \alpha_n v^{\alpha_n}(X_u) \big) \\
&\ \ \ \ \ \ + \lim_{n \rightarrow \infty} \sum_{t \leq u < T} \big(Z^{\alpha_n}_u - v^{\alpha_n}(x_0)\mathbf{1}\big)^{\ast}M_{u+1} \\
&= Y_t - \sum_{t \leq u < T} \big( f(X_u,Z_u) - \lambda \big) + \sum_{t \leq u < T} Z_u^{\ast}M_{u+1},
\end{align*}
and we see that $(Y,Z,\lambda)$ is indeed a solution of the EBSDE (\ref{eq:ErgodicBSDEthm}).

Suppose that $(Y',Z',\lambda')$ is another bounded solution. Let $\tilde{Y} = Y - Y'$, $\tilde{Z} = Z - Z'$ and $\tilde{\lambda} = \lambda - \lambda'$. Then
\begin{equation}\label{eq:TildeDynamics}
\tilde{Y}_T = \tilde{Y}_0 - \sum_{0 \leq u < T} \big( f(X_u,Z_u) - f(X_u,Z'_u) - \tilde{\lambda} \big) + \sum_{0 \leq u < T} \tilde{Z}_u^{\ast}M_{u+1}.
\end{equation}
By Proposition~\ref{ChangeMeasure}, there exists a measure $\Q$ such that
$$-\sum_{0 \leq u < t} \big(f(X_u,Z_u) - f(X_u,Z'_u)\big) + \sum_{0 \leq u < t} \tilde{Z}_u^{\ast}M_{u+1}$$
is a martingale under $\Q$. Taking an $\E_{\Q}$ expectation in (\ref{eq:TildeDynamics}), we obtain
$$\tilde{\lambda} = T^{-1}\E_{\Q}\big[\tilde{Y}_T - \tilde{Y}_0\big].$$
Since $\tilde{Y}$ is uniformly bounded, taking the limit $T \rightarrow \infty$ gives $\tilde{\lambda} = 0$, so that $\lambda = \lambda'$. Substituting back into (\ref{eq:TildeDynamics}) and taking an $\E_{\Q}[\, \cdot \, | \, \F_0]$ expectation gives
\begin{equation}\label{eq:YtildeCondExp}
\E_{\Q}\big[\tilde{Y}_T \, \big| \, \F_0\big] = \tilde{Y}_0.
\end{equation}
Suppose further that $Y'$ and $Z'$ are Markovian and do not depend on $t$, so that in particular there exists a function $v' : \States \rightarrow \Reals$ such that $Y'_t = v'(X_t)$. Then the measure $\Q$ may be taken to be the measure given by Lemma~\ref{XisUniErgUnderQ}, so that $X$ is still a uniformly ergodic Markov chain under $\Q$. Writing $\tilde{\pi}$ for the ergodic measure of $X$ under $\Q$, it follows from (\ref{eq:YtildeCondExp}) that, for any $x \in \States$,
$$v(x) - v'(x) = \lim_{T \rightarrow \infty} \E_{\Q}\big[\tilde{Y}_T \, \big| \, X_0 = x\big] = \int_{\States} \big(v(y) - v'(y)\big) d\tilde{\pi}(y).$$
Since the right hand side is independent of $x$, we see that $v(x) = v'(x) + c$ for all $x$, and hence that $Y_t = Y'_t + c$, for some $c \in \Reals$. In particular, if $v'(x_0) = 0$, then $c = 0$, and hence $Y = Y'$ up to indistinguishability.

With $\tilde{\lambda} = 0$ and $\tilde{Y}_t = c = \tilde{Y}_{t+1}$, we deduce from the one-step dynamics of $\tilde{Y}$ that $\tilde{Z}_t^{\ast}M_{t+1} = 0$, and hence that $Z \sim_M Z'$.
\end{proof}

\begin{mycor}
The sequences $\{\alpha_n v^{\alpha_n}(x)\}_{n \geq 1}$ and $\{v^{\alpha_n}(x) - v^{\alpha_n}(x_0)\}_{n \geq 1}$ constructed in Lemma~\ref{ConvergeAlphan} converge to $\lambda$ and $v(x)$ respectively for any choice of sequence $\alpha_n \searrow 0$.
\end{mycor}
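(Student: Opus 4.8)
The plan is to reduce the claim to the uniqueness established in Theorem~\ref{SolnErgodicBSDE} via a standard subsequence argument. The only subtlety is that Lemma~\ref{ConvergeAlphan} produces the limits $\lambda$ and $v$ along a \emph{particular} subsequence extracted by Bolzano--Weierstrass, whereas here we must show the limits are the same for \emph{every} sequence $\alpha_n \searrow 0$. The key observation is that the derivation in the proof of Theorem~\ref{SolnErgodicBSDE}, showing that the limiting pair solves the EBSDE, never used any special property of the sequence beyond $\alpha_n \searrow 0$ and the existence of the relevant limits.

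First I would fix an arbitrary sequence $\alpha_n \searrow 0$. By the uniform bound (\ref{eq:BoundOnvalpha}) from Lemma~\ref{ConvergeAlphan}, for each fixed $x \in \States$ the sequences $\{\alpha_n v^{\alpha_n}(x)\}_{n \geq 1}$ and $\{v^{\alpha_n}(x) - v^{\alpha_n}(x_0)\}_{n \geq 1}$ are bounded by $C'$; since $\States$ is finite, these are bounded sequences in a finite-dimensional space. The strategy is then to show that every convergent subsequence has the same limit, namely $\lambda$ and $v(x)$, and then to appeal to the elementary fact that a bounded sequence in a finite-dimensional space converges to $L$ as soon as all of its convergent subsequences do.

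The central step is the identification of subsequential limits. Given any subsequence, I would apply Bolzano--Weierstrass exactly as in Lemma~\ref{ConvergeAlphan} to extract a further subsequence $\{\alpha_{n_k}\}$ along which $\alpha_{n_k} v^{\alpha_{n_k}}(x) \to \lambda'$ and $v^{\alpha_{n_k}}(x) - v^{\alpha_{n_k}}(x_0) \to v'(x)$ for every $x \in \States$, for some constant $\lambda' \in \Reals$ and function $v' : \States \rightarrow \Reals$. Repeating verbatim the computation in the proof of Theorem~\ref{SolnErgodicBSDE} then shows that $(Y',Z',\lambda')$, with $Y'_t = v'(X_t)$ and $e_k^{\ast}Z'_t = v'(e_k)$, is a bounded, stationary, Markovian solution of the EBSDE (\ref{eq:ErgodicBSDEthm}); note that $v'(x_0) = 0$ holds automatically, since $v^{\alpha_{n_k}}(x_0) - v^{\alpha_{n_k}}(x_0) = 0$ for every $k$.

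Finally I would invoke uniqueness. By Theorem~\ref{SolnErgodicBSDE}, any bounded solution shares the same $\lambda$, forcing $\lambda' = \lambda$; and since both $v$ and $v'$ arise from bounded, stationary, Markovian solutions normalised by $v(x_0) = v'(x_0) = 0$, the same theorem forces $v' = v$. Hence every convergent subsequence of $\{\alpha_n v^{\alpha_n}(x)\}$ has limit $\lambda$, and every convergent subsequence of $\{v^{\alpha_n}(x) - v^{\alpha_n}(x_0)\}$ has limit $v(x)$, so both full sequences converge as claimed. I expect the only genuine work to be confirming that the derivation in Theorem~\ref{SolnErgodicBSDE} passes through unchanged for an arbitrary limiting subsequence---that is, that it relied solely on $\alpha \searrow 0$ together with the existence of the limits, and not on the specific sequence built in Lemma~\ref{ConvergeAlphan}. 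This is the main (though mild) obstacle; the remainder is the routine ``every subsequence has a sub-subsequence converging to a common limit'' principle.
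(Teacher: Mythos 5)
Your proposal is correct and takes essentially the same route as the paper: the paper's own proof is a two-sentence contradiction argument that likewise reduces everything to the uniqueness of bounded, stationary, Markovian solutions from Theorem~\ref{SolnErgodicBSDE}, noting that any subsequential limit would yield such a solution. Your version merely spells out the details the paper leaves implicit (the sub-subsequence principle, the automatic normalisation $v'(x_0)=0$, and the fact that the construction in Theorem~\ref{SolnErgodicBSDE} uses nothing about the sequence beyond $\alpha_n \searrow 0$ and the existence of the limits).
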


\begin{proof}
Suppose this were not the case. Then we could construct two sequences with distinct limits, both of which would yield bounded, stationary, Markovian solutions to the EBSDE, contradicting the uniqueness established in Theorem~\ref{SolnErgodicBSDE}.
\end{proof}

\begin{mycor}
Let $\pi$ denote the ergodic measure of $X$ under $\Prob$. Writing $\mathbf{v}$ for the vector in $\Reals^N$ with entries $e_k^{\ast}\mathbf{v} = v(e_k)$, the value $\lambda$ in the EBSDE solution $(Y,Z,\lambda) = (v(X_t),\mathbf{v},\lambda)$ is given by
$$\lambda = \int_{\States} f(x,\mathbf{v}) d\pi(x) = \sum_{x \in \States} f(x,\mathbf{v}) \pi(\{x\}).$$
Furthermore, there exists a probability measure $\pi_{\mathbf{v}}$ on $\States$ such that
$$\lambda = \int_{\States} f(x,0) d\pi_{\mathbf{v}}(x).$$
\end{mycor}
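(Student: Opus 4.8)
The plan is to establish the two representations of $\lambda$ separately. For the first identity, I would start from equation (\ref{eq:ErgodicBSDEthm}) with $Y_t = v(X_t)$ and $Z_t = \mathbf{v}$, and examine its one-step dynamics. Specifically, the EBSDE gives
$$v(X_{t+1}) = v(X_t) - \big(f(X_t,\mathbf{v}) - \lambda\big) + \mathbf{v}^{\ast}M_{t+1}.$$
Taking an $\E_{\Prob}[\, \cdot \, | \, \F_t]$ expectation and using $\E_{\Prob}[M_{t+1} \, | \, \F_t] = 0$, this rearranges to $\E_{\Prob}[v(X_{t+1}) \, | \, \F_t] - v(X_t) = \lambda - f(X_t,\mathbf{v})$. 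Now I would integrate this against the ergodic (invariant) measure $\pi$: since $\pi$ is invariant under the transition matrix $A$, we have $\int_{\States} \E_{\Prob}[v(X_{t+1}) \, | \, X_t = x] \, d\pi(x) = \int_{\States} v(x) \, d\pi(x)$, so the left-hand side integrates to zero. This immediately yields $0 = \lambda - \int_{\States} f(x,\mathbf{v}) \, d\pi(x)$, which is the desired formula. The finite-sum version follows since $\States$ is a finite set and $\pi$ is a probability measure on it.

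For the second identity, I would appeal to the $\gamma$-balanced structure of $f$. By Definition~\ref{DefnGammaBalanced} applied with $z = \mathbf{v}$ and $z' = 0$, there is a random field $\psi$ with $\psi(\omega,t,\mathbf{v},0) = \Psi X_t$ for a transition matrix $\Psi \sim_{\gamma} A$ (exactly as in the proof of Lemma~\ref{XisUniErgUnderQ}, since $\mathbf{v}$ and $0$ are deterministic and $t$-independent). Property (i) of Definition~\ref{DefnGammaBalanced} then gives
$$f(x,\mathbf{v}) - f(x,0) = \mathbf{v}^{\ast}(\Psi x - Ax).$$
Substituting this into the first formula for $\lambda$ and using invariance of $\pi$ under $A$ (so that $\int_{\States} \mathbf{v}^{\ast}A x \, d\pi(x) = \mathbf{v}^{\ast} A \bar{x}$ where $\bar{x} = \int x \, d\pi$ satisfies $A\bar{x} = \bar{x}$), I would aim to show that the $A$-contribution cancels and what remains is an integral of $f(x,0)$ against the $\Psi$-weighted measure. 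The cleaner route, however, is to let $\pi_{\mathbf{v}}$ be the invariant measure of the chain with transition matrix $\Psi$ (which exists and is unique since $\Psi \sim_{\gamma} A$ implies $\Psi$ is uniformly ergodic by Theorem~\ref{BisUniformlyErgodic}), and then repeat the argument of the first part under the measure $\Q$ of Lemma~\ref{XisUniErgUnderQ} with $Z = \mathbf{v}$, $Z' = 0$: under $\Q$, the process $\tilde{M}$ is a martingale, the dynamics of $v(X_t)$ reduce to a driver $f(X_t,0)$, and integrating against the $\Q$-invariant measure $\pi_{\mathbf{v}}$ yields $\lambda = \int_{\States} f(x,0) \, d\pi_{\mathbf{v}}(x)$.

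The main obstacle I anticipate is verifying that the invariant measure under $\Q$ is the correct object to integrate against, and confirming that the martingale term genuinely vanishes in expectation under $\Q$ — this requires that $\mathbf{v}^{\ast}M_{t+1}$, though a $\Prob$-martingale difference, is handled correctly once we change to $\Q$, where it is $\tilde{M}$ (incorporating the driver difference) that is the martingale. Concretely, under $\Q$ the one-step dynamics of $v(X_t)$ become $v(X_{t+1}) = v(X_t) - (f(X_t,0) - \lambda) + (\text{$\Q$-martingale difference})$, so integrating $\E_{\Q}[v(X_{t+1}) \, | \, X_t = x] - v(x) = \lambda - f(x,0)$ against the $\Q$-invariant $\pi_{\mathbf{v}}$ gives the result. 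I would make sure to note that uniform ergodicity of $X$ under $\Q$, guaranteed by Lemma~\ref{XisUniErgUnderQ}, ensures $\pi_{\mathbf{v}}$ exists and is a genuine probability measure, completing the argument.
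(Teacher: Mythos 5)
Your proposal is correct and follows essentially the same route as the paper: the first identity comes from the dynamics of $v(X_t)$ together with the invariance of $\pi$ under $A$, and the second from changing to the measure $\Q$ of Lemma~\ref{XisUniErgUnderQ} (with $Z=\mathbf{v}$, $Z'=0$) and integrating against its ergodic measure $\pi_{\mathbf{v}}$. The only cosmetic difference is that you telescope over a single step where the paper sums over $T$ steps and divides by $T$; the substance is identical.
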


\begin{proof}
The invariance of the ergodic measure $\pi$ implies that for any fixed time $t \geq 0$ and any function $g : \States \rightarrow \Reals$,
\begin{equation}\label{eq:InvarianceProperty}
\int_{\States} \E\big[g(X_t) \, \big| \, X_0 = x\big] d\pi(x) = \int_{\States} g(x) d\pi(x).
\end{equation}
For any $T > 0$, we have
\begin{equation}\label{EBSDEsoln0T}
v(X_0) = v(X_T) + \sum_{0 \leq u < T} \big(f(X_u,\mathbf{v}) - \lambda\big) - \sum_{0 \leq u < T} \mathbf{v}^{\ast}M_{u+1}.
\end{equation}
Then, by the invariance property (\ref{eq:InvarianceProperty}),
\begin{align*}
\int_{\States} v(x) d\pi(x) &= \int_{\States} \E\Bigg[v(X_T) + \sum_{0 \leq u < T} \big(f(X_u,\mathbf{v}) - \lambda\big) \, \Bigg| \, X_0 = x\Bigg] d\pi(x) \\
&= \int_{\States} v(x) d\pi(x) + T\int_{\States} f(x,\mathbf{v}) d\pi(x) - T\lambda,
\end{align*}
and rearranging gives the first result.

By Lemma~\ref{XisUniErgUnderQ}, there exists a probability measure $\Q$ such that, under $\Q$,
$$-\sum_{0 \leq u < t} \big(f(X_u,\mathbf{v}) - f(X_u,0)\big) + \sum_{0 \leq u < t} \mathbf{v}^{\ast}M_{u+1}$$
is a martingale and $X$ is still a uniformly ergodic Markov chain. Let $\pi_{\mathbf{v}}$ denote the ergodic measure of $X$ under $\Q$. From (\ref{EBSDEsoln0T}), and the invariance property (\ref{eq:InvarianceProperty}), we have
\begin{align*}
\int_{\States} v(x) d\pi_{\mathbf{v}}(x) &= \int_{\States} \E_{\Q}\Bigg[v(X_T) + \sum_{0 \leq u < T} \big(f(X_u,0) - \lambda\big) \, \Bigg| \, X_0 = x\Bigg] d\pi_{\mathbf{v}}(x) \\
&= \int_{\States} v(x) d\pi_{\mathbf{v}}(x) + T\int_{\States} f(x,0) d\pi_{\mathbf{v}}(x) - T\lambda,
\end{align*}
and rearranging gives the second result.
\end{proof}

\begin{myremark}
Under the additional assumption that $X$ has no transient states, it follows from the one-step dynamics of (\ref{eq:ErgodicBSDEthm}), that the EBSDE solution $(v(X_t),\mathbf{v},\lambda)$ is also a solution of the vector equation
\begin{equation}\label{OneStepErgvSoln}
\mathbf{v} - \mathbf{f}(\mathbf{v}) + \lambda \mathbf{1} - A^{\ast}\mathbf{v} = 0,
\end{equation}
where $\mathbf{f}(\mathbf{v})$ denotes the vector with entries $e_k^{\ast}\mathbf{f}(\mathbf{v}) = f(e_k,\mathbf{v})$. Further, this solution is unique up to equality in $\lambda$ and a constant shift in $\mathbf{v}$.
\end{myremark}

We now provide a comparison theorem for Ergodic BSDEs, concerning the $\lambda$ part of the solution.

\begin{mythm}
Let $f$ and $f'$ be two $\gamma$-balanced Markovian drivers, and let $(Y,Z,\lambda)$ and $(Y',Z',\lambda')$ be any corresponding bounded EBSDE solutions. If $f(x,z) \geq f'(x,z)$ for all $x \in \States$ and $z \in \Reals^N$, then $\lambda \geq \lambda'$.
\end{mythm}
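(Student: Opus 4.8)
The plan is to mirror the uniqueness argument for $\lambda$ in the proof of Theorem~\ref{SolnErgodicBSDE}, except that the difference of drivers will now contribute a nonnegative correction which turns the equality $\tilde\lambda = 0$ into the inequality $\tilde\lambda \geq 0$. First I would set $\tilde Y = Y - Y'$, $\tilde Z = Z - Z'$ and $\tilde\lambda = \lambda - \lambda'$, and telescope both equations over $[0,T]$ to obtain
$$\tilde Y_T = \tilde Y_0 - \sum_{0 \leq u < T} \big( f(X_u, Z_u) - f'(X_u, Z'_u) \big) + T\tilde\lambda + \sum_{0 \leq u < T} \tilde Z_u^{\ast} M_{u+1}.$$
The crucial manipulation is to split the driver difference as
$$f(X_u, Z_u) - f'(X_u, Z'_u) = \big( f(X_u, Z_u) - f(X_u, Z'_u) \big) + \big( f(X_u, Z'_u) - f'(X_u, Z'_u) \big),$$
where, by the hypothesis $f \geq f'$, the second bracket $g_u := f(X_u, Z'_u) - f'(X_u, Z'_u)$ is nonnegative.

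Next I would absorb the first bracket, together with the martingale transform, into a single change of measure. Since $f$ is $\gamma$-balanced it satisfies the hypotheses of Proposition~\ref{ChangeMeasure} by Lemma~\ref{GammaBalPropMeasure}; crucially, that proposition applies to \emph{arbitrary} adapted processes and therefore does not require $Z, Z'$ to be Markovian or stationary. Applying it to the driver $f$ and the processes $Z, Z'$ yields a measure $\Q$, equivalent to $\Prob$ on each $\F_t$, under which
$$\tilde M_t := -\sum_{0 \leq u < t} \big( f(X_u, Z_u) - f(X_u, Z'_u) \big) + \sum_{0 \leq u < t} \tilde Z_u^{\ast} M_{u+1}$$
is a martingale. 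Taking an $\E_{\Q}$ expectation in the telescoped identity and using $\E_{\Q}[\tilde M_T] = 0$ then gives
$$T \tilde\lambda = \E_{\Q}[\tilde Y_T] - \E_{\Q}[\tilde Y_0] + \E_{\Q}\Big[ \sum_{0 \leq u < T} g_u \Big].$$

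Finally I would let $T \to \infty$. Because both solutions are bounded, $\tilde Y$ is uniformly bounded, and this bound transfers to $\Q$-a.s.\ bounds since $\Q \sim \Prob$ on each $\F_t$; hence $|\E_{\Q}[\tilde Y_T] - \E_{\Q}[\tilde Y_0]| \leq 2\|\tilde Y\|_{\infty}$ uniformly in $T$. As $g_u \geq 0$, the last term is nonnegative, so $T\tilde\lambda \geq -2\|\tilde Y\|_{\infty}$ for every $T \geq 1$, which forces $\tilde\lambda \geq 0$, i.e.\ $\lambda \geq \lambda'$. I expect the only genuine subtlety to be the requirement that the comparison hold for \emph{any} bounded solutions: one must resist using the ergodic-measure representation of $\lambda$ (which relies on the $\gamma$-balanced change of measure of Lemma~\ref{XisUniErgUnderQ}, and hence on Markovian, stationary $Z, Z'$), and instead argue directly from Proposition~\ref{ChangeMeasure} together with the uniform boundedness of $\tilde Y$, exactly as above. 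I would also note that either driver's change of measure serves equally well, the only difference being whether the nonnegative gap $g_u$ is evaluated along $Z'$ or along $Z$.
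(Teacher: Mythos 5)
Your proof is correct, but it takes a genuinely different route from the paper's. Both arguments start from the same decomposition --- difference the two EBSDEs over $[0,T]$ and split $f(X_u,Z_u)-f'(X_u,Z'_u)$ into a $z$-increment of a single driver (absorbed into a change of measure) plus a sign-definite gap coming from $f\geq f'$ --- but they diverge from there. The paper first reduces to the canonical bounded, stationary, Markovian solutions $(v(X_t),\mathbf{v},\lambda)$ and $(v'(X_t),\mathbf{v}',\lambda')$ of Theorem~\ref{SolnErgodicBSDE} (legitimate, since that theorem shows every bounded solution shares the same $\lambda$), applies Lemma~\ref{XisUniErgUnderQ} with the driver $f'$ and the constant processes $\mathbf{v},\mathbf{v}'$ so that $X$ remains uniformly ergodic under $\Q$, and then integrates against the ergodic measure $\hat{\pi}$ of $X$ under $\Q$ to obtain the exact identity $\lambda-\lambda'=\int_{\States}\big(f(x,\mathbf{v})-f'(x,\mathbf{v})\big)\,d\hat{\pi}(x)\geq 0$. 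You instead work directly with arbitrary bounded solutions, invoke only Proposition~\ref{ChangeMeasure} (via Lemma~\ref{GammaBalPropMeasure}), and conclude from $T\tilde{\lambda}=\E_{\Q}[\tilde{Y}_T]-\E_{\Q}[\tilde{Y}_0]+\E_{\Q}\big[\sum_{0\leq u<T}g_u\big]\geq -2\|\tilde{Y}\|_{\infty}$ by dividing by $T$ and letting $T\rightarrow\infty$. Your version is more elementary --- it needs none of the Section~3 ergodicity estimates and no reduction to the Markovian case --- and your closing observations (that the gap may be evaluated along either $Z$ or $Z'$ with the corresponding choice of which driver generates the change of measure, and that one must not lean on the ergodic-measure representation when $Z,Z'$ are not Markovian and stationary) are exactly right. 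What the paper's heavier machinery buys is the quantitative formula for $\lambda-\lambda'$ as an integral of the driver gap against an ergodic measure, which in particular identifies when equality holds; your argument yields only the inequality. One small point worth making explicit: the integrability of $\sum_{0\leq u<T}g_u$ under $\Q$ is automatic here because each $\F_t$ consists of finitely many atoms, so every $\F_{T-1}$-measurable random variable is bounded.
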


\begin{proof}
Write $(\bar{Y},\bar{Z},\lambda) = (v(X_t),\mathbf{v},\lambda)$ and $(\bar{Y}',\bar{Z}',\lambda') = (v'(X_t),\mathbf{v}',\lambda')$ for the corresponding bounded, stationary, Markovian solutions given in Theorem~\ref{SolnErgodicBSDE}. By Lemma~\ref{XisUniErgUnderQ}, there exists a probability measure $\Q$ such that, under $\Q$,
$$-\sum_{0 \leq u < t} \big(f'(X_u,\mathbf{v}) - f'(X_u,\mathbf{v}')\big) + \sum_{0 \leq u < t} (\mathbf{v} - \mathbf{v}')^{\ast}M_{u+1}$$
is a martingale and $X$ is still a uniformly ergodic Markov chain. Write $\hat{\pi}$ for the ergodic measure of $X$ under $\Q$. Taking the difference of the EBSDEs, and applying the invariance property (\ref{eq:InvarianceProperty}), we have
\begin{align*}
\int_{\States}& \big(v(x) - v'(x)\big) d\hat{\pi}(x) \\
&= \int_{\States} \E_{\Q}\big[v(X_T) - v'(X_T) \, \big| \, X_0 = x\big] d\hat{\pi}(x) \\
&\ \ \ \ \ \ + \int_{\States} \E_{\Q}\Bigg[\sum_{0 \leq u < T} \big(f(X_u,\mathbf{v}) - f'(X_u,\mathbf{v}) - (\lambda - \lambda')\big) \, \Bigg| \, X_0 = x\Bigg] d\hat{\pi}(x) \\
&= \int_{\States} \big(v(x) - v'(x)\big) d\hat{\pi}(x) + T\int_{\States} \big(f(x,\mathbf{v}) - f'(x,\mathbf{v})\big) d\hat{\pi}(x) - T(\lambda - \lambda'),
\end{align*}
and hence, by rearrangement,
$$\lambda - \lambda' = \int_{\States} \big(f(x,\mathbf{v}) - f'(x,\mathbf{v})\big) d\hat{\pi}(x) \geq 0.$$
\end{proof}

\section{Optimal Ergodic Control}

As an application of the theory we have just developed for discrete time Ergodic BSDEs, we shall now present a treatment of an ergodic control problem. We will follow the method given in Section 5.2 of \cite{CohenHu2013}, which is itself based on the work of \cite{FuhrmanHuTessitore2009} and \cite{DebusscheHuTessitore2011}.

Suppose that
\begin{itemize}
\item Under $\Prob$, $X$ is a uniformly ergodic Markov chain on $\States$ with transition matrix $A$,
\item $\mathcal{U}$ is a space of `controls', which we assume to be a topological space, equal to a countable union of compact metrizable subsets of itself,
\item $B^{(\cdot)}$ is a continuous function which maps each element $u \in \mathcal{U}$ to a transition matrix $B^u$ such that $B^u \sim_{\gamma} A$, for some fixed $\gamma \in (0,1)$,
\item $L : \States \times \mathcal{U} \rightarrow \Reals$ is a cost function, which we assume to be bounded, measurable in $x$ and continuous in $u$.
\end{itemize}
Define the ergodic cost
$$J(x,U) = \limsup_{T \rightarrow \infty} \, \frac{1}{T} \, \E^{U}_x\Bigg[\sum_{0 \leq s < T} L(X_s,U_s)\Bigg],$$
where
\begin{itemize}
\item $U$ is a $\mathcal{U}$-valued adapted process, which we shall also refer to as a `control',
\item $\E^U_x$ is the expectation under which $X_0 = x$, and for any given pair $(\omega,t)$, $X$ jumps according to transition matrix $B^{U_t(\omega)}$.
\end{itemize}

We wish to minimize $J(x,U)$ over all controls $U$. Our approach is to work with the probability measure under which $X$ jumps at time $t$ according to the transition matrix $B^{U_t}$. As we have made no Markov or time-homogeneity assumptions on $U$, $X$ will in general not be a Markov chain under our new measure.

We define the Hamiltonian
\begin{equation}\label{eq:DefnHamiltonian}
f(x,z) = \inf_{u \in \mathcal{U}} \big\{L(x,u) + z^{\ast}(B^u - A)x\big\}.
\end{equation}
Note that $f$ is finite valued and that $f(\, \cdot \,,0)$ is bounded. By Lemma~\ref{infGammaBalLemma}, $f(X_t,z)$ is a $\gamma$-balanced Markovian driver. Hence, by Theorem~\ref{SolnErgodicBSDE}, the EBSDE with driver $f(X_t,z)$ admits a bounded, stationary, Markovian solution $(\bar{Y},\bar{Z},\bar{\lambda}) = (v(X_t),\mathbf{v},\bar{\lambda})$, where as usual $\mathbf{v}$ is the vector in $\Reals^N$ with components $v(e_k)$.

If the infimum in (\ref{eq:DefnHamiltonian}) is attained then, by Filippov's implicit function theorem (see either McShane and Warfield \cite{McShaneWarfield1967} or Bene\v{s} \cite{Benes1970}), there exists a measurable function $\kappa : \States \times \Reals^N \rightarrow \mathcal{U}$ such that
\begin{equation}\label{eq:measkappa}
f(x,z) = L(x,\kappa(x,z)) + z^{\ast}(B^{\kappa(x,z)} - A)x.
\end{equation}

If the infimum in (\ref{eq:DefnHamiltonian}) is not attained, then applying Theorem 21.3.4 from \cite{CohenElliott2015} to the function $G(x,z,u) = L(x,u) + z^{\ast}(B^u - A)x$, we deduce, for any $\epsilon > 0$, the existence of a measurable function $\tau^{\epsilon} : \States \times \Reals^N \rightarrow \mathcal{U}$ such that
\begin{equation}\label{eq:EpsilonOptimal}
L(x,\tau^{\epsilon}(x,z)) + z^{\ast}(B^{\tau^{\epsilon}(x,z)} - A)x < f(x,z) + \epsilon.
\end{equation}

\begin{mythm}\label{ErgodicControlThm}
In the setting described above, let $(Y,Z,\lambda)$ be any (possibly non-Markovian) bounded solution to the EBSDE (\ref{eq:ErgodicBSDE}) with driver $f$. Then the following hold:
\begin{itemize}
\item For any control $U$, we have $J(x,U) \geq \lambda = \bar{\lambda}$, with equality if
\begin{equation}\label{eq:infattained}
f(X_s,Z_s) = L(X_s,U_s) + Z_s^{\ast}\big(B^{U_s} - A\big)X_s \ \ \text{for all} \ \ s \geq 0.
\end{equation}
\item If the infimum in (\ref{eq:DefnHamiltonian}) is attained, then the control $\bar{U}_t = \kappa(X_t,Z_t)$ satisfies $J(x,\bar{U}) = \bar{\lambda}$.
\item Even if the infimum in (\ref{eq:DefnHamiltonian}) is not attained, there still exists a control $\hat{U}$ such that $J(x,\hat{U}) = \bar{\lambda}$.
\end{itemize}
\end{mythm}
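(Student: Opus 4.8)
The plan is to derive a single ``fundamental identity'' for an arbitrary control and then read off all three claims from it. First I would fix a control $U$ and take an $\E^U_x[\,\cdot\,]$ expectation across the EBSDE (\ref{eq:ErgodicBSDE}) with $t=0$. Since under $\E^U$ the chain jumps according to $B^{U_u}$, we have $\E^U[X_{u+1}\,|\,\F_u]=B^{U_u}X_u$, so that $\E^U[Z_u^{\ast}M_{u+1}\,|\,\F_u]=Z_u^{\ast}(B^{U_u}-A)X_u$ and the martingale-type term contributes its compensator (all finite-horizon sums are integrable in this finite-state setting). Abbreviating the nonnegative ``gap''
$$\Delta_u := L(X_u,U_u)+Z_u^{\ast}(B^{U_u}-A)X_u-f(X_u,Z_u)\ge 0,$$
where nonnegativity is exactly the definition of $f$ as an infimum in (\ref{eq:DefnHamiltonian}), rearranging yields the identity
$$\frac{1}{T}\,\E^U_x\Bigg[\sum_{0\le u<T}L(X_u,U_u)\Bigg]=\lambda+\frac{Y_0-\E^U_x[Y_T]}{T}+\frac{1}{T}\,\E^U_x\Bigg[\sum_{0\le u<T}\Delta_u\Bigg].$$
Because $Y$ is bounded the middle term vanishes as $T\to\infty$, so everything reduces to controlling the Ces\`aro average of $\Delta_u$.

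For the first bullet, $\Delta_u\ge 0$ immediately gives $J(x,U)\ge\lambda$ on taking $\limsup_T$, while Theorem~\ref{SolnErgodicBSDE} ensures $\lambda=\bar\lambda$ for every bounded solution; and if (\ref{eq:infattained}) holds then $\Delta_u\equiv 0$, so the average equals $\lambda$ exactly and the $\limsup$ becomes a genuine limit equal to $\lambda$. The second bullet is then a direct corollary: with $\bar U_t=\kappa(X_t,Z_t)$, the defining relation (\ref{eq:measkappa}) of $\kappa$ says precisely that (\ref{eq:infattained}) holds pathwise, whence $\Delta_u\equiv 0$ and $J(x,\bar U)=\bar\lambda$. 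In both cases I would note that $\kappa(X_t,Z_t)$ is $\F_t$-measurable and $\mathcal{U}$-valued, hence an admissible control.

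The real work is the third bullet, where the infimum need not be attained and so no single selector forces $\Delta_u\equiv 0$. The plan is a concatenation (diagonalisation) argument. Fix $\epsilon_n\searrow 0$ and switching times $0=T_0<T_1<T_2<\cdots\to\infty$, and define $\hat U_t:=\tau^{\epsilon_n}(X_t,Z_t)$ for $t\in[T_{n-1},T_n)$, using the $\epsilon$-optimal selectors of (\ref{eq:EpsilonOptimal}); this $\hat U$ is admissible for the same measurability reason. By (\ref{eq:EpsilonOptimal}) we have the \emph{pathwise} bound $\Delta_u<\epsilon_n$ on the $n$\textsuperscript{th} block. To see the Ces\`aro average vanishes, for $T\in[T_{N-1},T_N)$ I would split the sum at a fixed index $m$: the head $\sum_{n\le m}$ is bounded by $\epsilon_1 T_m/T\to 0$ as $T\to\infty$ with $m$ fixed, while the tail $\sum_{m<n\le N}$ is bounded by $\epsilon_{m+1}$ since $\epsilon_n$ is decreasing. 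Hence $\limsup_T \tfrac1T\E^{\hat U}_x[\sum_{0\le u<T}\Delta_u]\le\epsilon_{m+1}$ for every $m$, and letting $m\to\infty$ gives $0$. Feeding this back into the fundamental identity yields $J(x,\hat U)=\lambda=\bar\lambda$, as required. The main obstacle is precisely this last step: an unattained infimum only supplies $\epsilon$-optimal controls, and the delicate point is that the ergodic ($\limsup$ of time-average) cost is insensitive to behaviour on any initial finite block, which is exactly what allows the concatenation to drive the averaged gap to zero despite never achieving pointwise optimality.
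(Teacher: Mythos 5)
Your proposal is correct and takes essentially the same approach as the paper: the same expected identity under $\E^U_x$ with a nonnegative gap term coming from $f$ being an infimum, and the same selector arguments for the first two bullets. For the third bullet the paper uses a per-step $2^{-s}$-optimal selector $\tau(s,x,z)$, whose summable error makes the Ces\`aro average vanish trivially, whereas you use block-wise $\epsilon_n$-optimal selectors with a head/tail split; this is the same diagonalisation idea in slightly different clothing.
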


\begin{proof}
The fact that $\lambda = \bar{\lambda}$ follows immediately from Theorem~\ref{SolnErgodicBSDE}.

Note that
$$-\sum_{0 \leq s < t} Z_s^{\ast}\big(B^{U_s} - A\big)X_s + \sum_{0 \leq s < t} Z_s^{\ast}M_{s+1}$$
is a martingale under $\E^U_x$. Since $(Y,Z,\bar{\lambda})$ is a solution of the EBSDE, we have
$$Y_T = Y_0 - \sum_{0 \leq s < T}\big(f(X_s,Z_s) - \bar{\lambda}\big) + \sum_{0 \leq s < T}Z_s^{\ast}M_{s+1}$$
for any $T>0$. Then
\begin{align}
\bar{\lambda} &= \frac{1}{T} \, \E^U_x\Bigg[Y_T - Y_0 + \sum_{0 \leq s < T} L(X_s,U_s)\Bigg] \nonumber \\
&\ \ \ \ \ \ + \frac{1}{T} \, \E^U_x\Bigg[\sum_{0 \leq s < T} \big(f(X_s,Z_s) - L(X_s,U_s) - Z_s^{\ast}\big(B^{U_s} - A\big)X_s\big)\Bigg]. \label{eq:lambdabar}
\end{align}
Since $f$ is the infimum over all controls, we have
$$\bar{\lambda} \leq \frac{1}{T} \, \E^U_x\Bigg[Y_T - Y_0 + \sum_{0 \leq s < T} L(X_s,U_s)\Bigg].$$
As $Y_T - Y_0$ is uniformly bounded, it follows that
$$\bar{\lambda} \leq \limsup_{T \rightarrow \infty} \, \frac{1}{T} \, \E^U_x\Bigg[\sum_{0 \leq s < T} L(X_s,U_s)\Bigg] = J(x,U).$$

If (\ref{eq:infattained}) holds, then equality holds throughout the above, and we see that $\bar{\lambda} = J(x,U)$.

If the infimum in (\ref{eq:DefnHamiltonian}) is attained, then we know that (\ref{eq:measkappa}) holds for some measurable function $\kappa : \States \times \Reals^N \rightarrow \mathcal{U}$. Setting $\bar{U}_s = \kappa(X_s,Z_s)$, we have that (\ref{eq:infattained}) holds, so that $\bar{\lambda} = J(x,\bar{U})$.

Even if the infimum in (\ref{eq:DefnHamiltonian}) is not attained, by (\ref{eq:EpsilonOptimal}), there exists a measurable function $\tau : \mathbb{N} \times \States \times \Reals^N \rightarrow \mathcal{U}$ such that
$$L(x,\tau(s,x,z)) + z^{\ast}(B^{\tau(s,x,z)} - A)x < f(x,z) + 2^{-s}.$$
Setting $\hat{U}_s = \tau(s,X_s,Z_s)$, (\ref{eq:lambdabar}) reduces to
$$\bar{\lambda} \geq \frac{1}{T} \, \E^{\hat{U}}_x\Bigg[Y_T - Y_0 + \sum_{0 \leq s < T} L(X_s,\hat{U}_s)\Bigg] - \frac{1}{T} \sum_{0 \leq s < T} 2^{-s},$$
and it follows that $\bar{\lambda} = J(x,\hat{U})$.
\end{proof}

In particular, the controls $\bar{U}_s = \kappa(X_s,\mathbf{v})$ (assuming the function $\kappa$ exists) and $\hat{U}_s = \tau(s,X_s,\mathbf{v})$ are optimal feedback controls.

\begin{myremark}
 We note that $\hat{U}_s$ is time-dependent, but that for any $\epsilon > 0$, a time-homogenous feedback control satisfying $J(x,\hat{U}) \leq \bar{\lambda} + \epsilon$ can also be attained by setting $\hat{U}_s = \tau^{\epsilon}(X_s,\mathbf{v})$, where $\tau^{\epsilon}$ satisfies (\ref{eq:EpsilonOptimal}).
\end{myremark}

It is worth noticing that the $\lambda$ we have obtained is optimal in the class of all strategies, not only among those of feedback type. If we had approached our problem purely through the equations obtained from (\ref{OneStepErgvSoln}) and (\ref{eq:measkappa}), this optimality would require separate analysis.

As noted above, (\ref{eq:infattained}) is a sufficient condition to guarantee that $J(x,U) = \bar{\lambda}$. In \cite{FuhrmanHuTessitore2009} and \cite{CohenHu2013}, among others, in analogous contexts, it is stated that this is also a necessary condition. However, this is incorrect, as modifying the value of an optimal control over any finite time horizon does not affect the associated ergodic cost. Indeed, in our setting we have shown that we can always construct an optimal control such that (\ref{eq:infattained}) does not necessarily hold at any time $s$.

\begin{myremark}
We have assumed that the Markov chain $X$ and the cost function $L$ are time-homogeneous. However, by considering the cyclic classes of $X$, it is not hard to extend the result of Theorem~\ref{ErgodicControlThm} to the case when both $X$ and $L$ are periodic in time.
\end{myremark}

\section{Conclusion}

We have shown that, when our underlying process is a uniformly ergodic Markov chain, Ergodic BSDEs with $\gamma$-balanced, Markovian drivers admit bounded Markovian solutions. We have also shown how these equations arise in the context of control problems when considering an ergodic cost functional, so that it is the long term asymptotic behaviour of the process that is important.

This has involved extending the existing theory of discrete time BSDEs to an infinite time horizon setting, in order to prove the existence of unique bounded solutions to a class of discounted BSDEs. We then showed how, by taking a suitable limit in these equations, we can obtain solutions to corresponding Ergodic BSDEs. A notable step in the proof involved finding ergodicity estimates for a discrete time Markov chain which hold uniformly for a suitable class of transition matrices, a result which is interesting in its own right.

When considering Ergodic BSDEs, we made Markovian and time-homogeneity assumptions on the underlying process $X$ and on our driver function $f$. However, we note that a significant amount of the preceding theory, including our construction of solutions to discounted BSDEs, holds much more generally, without any Markovian assumptions.

Throughout, we have considered discrete time (E)BSDEs as entities in their own right, rather than as approximations to the continuous time case. The precise connections and the extent to which our results approximate the corresponding continuous time theory remain to be explored.

One extension of our theory would be to the case where the underlying process is defined on a countably infinite state space, though it is expected that the majority of our analysis will remain essentially the same.

\end{document}